\newtheorem{proposition}{Proposition}[section]
\newtheorem{theorem}{Theorem}[section]
\newtheorem{lemma}{Lemma}[section]
\newtheorem{remark}{Remark}[section]
\newtheorem{notation}{Notation}[section]
\newtheorem{assumption}{Assumption}[section]
\numberwithin{equation}{section}
\title{Nondegeneracy of ground states for nonlinear scalar field equations 
 involving the Sobolev-critical exponent at high frequencies
 in three and four dimensions}
\author{Takafumi Akahori, Miho Murata}
\date{}
\newcommand{\Rd}{\mathbb{R}^d}
\begin{document}
\maketitle

\begin{abstract}
We consider nonlinear scalar field equations involving the Sobolev-critical exponent at high frequencies $\omega$. Since the limiting profile of the ground state as $\omega\to \infty$ is the Aubin-Talenti function and degenerate in a certain sense,  from the point of view of perturbation methods, 
 the nondegeneracy problem for the ground states at high frequencies is subtle.  In addition, since the limiting profile (Aubin-Talenti function) fails to lie in $L^{2}(\mathbb{R}^{d})$ for $d=3,4$, the nondegeneracy problem for $d=3,4$ is more difficult than that for $d\ge 5$ and an applicable methodology is not known. In this paper, we solve the nondegeneracy problem for $d=3,4$  by modifying the arguments in \cite{AIK, AIIKN}.
 We also show that the linearized operator around the ground state has exactly one negative eigenvalue.
\end{abstract}


\noindent 
{\bf Mathematics Subject Classification}~35J20, 35B09, 35B33, 35Q55



\section{Introduction}\label{21/10/14/11:19}

We consider the following nonlinear scalar field equation involving the Sobolev critical exponent:   
\begin{equation}\label{eq:1.1} 
 - \Delta u + \omega u  
-|u|^{\frac{4}{d-2}}u 
-
g(u)
=0 , 
\qquad u \colon \mathbb{R}^{d} \to \mathbb{C}
,
\end{equation}
where $d\ge 3$, $\omega>0$, and $g$ is a complex-valued function on $\mathbb{C}$ to be specified shortly (see Assumption \ref{21/12/23/14:45}). It is worthwhile noting that the existence of the term $g$ is essential; Indeed, the Pohozaev's identity (see \eqref{18/11/24/17:41}) shows that if $g$ is not placed, then \eqref{eq:1.1} has no solution in $H^{1}(\mathbb{R}^{d})$ for all $\omega>0$. 

Our aim is to prove the nondegeneracy of ground states to \eqref{eq:1.1} under   certain general assumptions about the function $g$ (see Theorem \ref{20/8/17/11:39}). Moreover, we show that the linearized operator around the ground state  has exactly one negative eigenvalue (see Theorem \ref{21/9/17/9:1}). 
 These results play important roles in the study of dynamics for the corresponding time-dependent nonlinear Schr\"odinger equation (see \cite{AIKN3, Nakanishi-Schlag2}). Moreover, the nondegeneracy implies that there is no bifurcation branch from the ground state (see \cite{Crandall-Rabinowitz}). 

Although the nondegeneracy of ground states has been studied for a large class of nonlinear elliptic equations, we have not found a methodology completely applicable to the equation \eqref{eq:1.1} at high frequencies ($\omega \gg 1$) for $d=3,4$, even for the typical model $g(u)=|u|^{p-1}u$ with $1<p<\frac{d+2}{d-2}$ (see Remark \ref{22/3/8/18:16}).
 The main difficulty is that the limiting profile of ground states as $\omega \to \infty$ is degenerate for all $d\ge 3$ and fails to lie in $L^{2}(\mathbb{R}^{d})$ for $d=3,4$; Precisely, by an appropriate scaling, the equation \eqref{eq:1.1} at high frequencies ($\omega \gg 1$) can be thought of as a perturbation of the equation  
\begin{equation}\label{21/10/16/11:41}
-\Delta u - |u|^{\frac{4}{d-2}} u=0, 
\qquad u \colon \mathbb{R}^{d}\to \mathbb{C}. 
\end{equation}
Note that the equation \eqref{21/10/16/11:41} is invariant under the scaling $u(x)\mapsto \lambda^{-1}u(\lambda^{-\frac{2}{d-2}}x)$. 
 It is known that the ``ground state'' to \eqref{21/10/16/11:41}
 is the Aubin-Talenti function  
\begin{equation}\label{eq:1.16}
W(x)
:=\Big(1+\frac{|x|^{2}}{d(d-2)} \Big)^{-\frac{d-2}{2}}
.
\end{equation}
Hence, we can expect that the limiting profile of ground states to \eqref{eq:1.1} as $\omega\to \infty$ is the Aubin-Talenti function $W$; Actually, we can prove this (see Lemma \ref{theorem:3.1}) and therefore see that the difficulty in  the nondegeneracy problem arises from $W$ being degenerate in a certain sense for all $d\ge 3$ (see \eqref{20/9/13/9:52}) and not in $L^{2}(\mathbb{R}^{d})$ for $d=3,4$.

Now, we state the assumptions about the function $g$:   
\begin{assumption}\label{21/12/23/14:45}  
The function $g\colon \mathbb{C}\to \mathbb{C}$ in \eqref{eq:1.1} is independent of the frequency $\omega$ and satisfies the following conditions: 
\begin{enumerate}
\item
The restriction of $g$ on the open interval $(0,\infty)$ is a real-valued $C^{2}$-function, and  
\begin{equation}\label{21/11/28/11:21}
g(0)=0,
\qquad 
g(z)=\dfrac{z}{|z|}g(|z|)
\quad 
\mbox{for all $z\in \mathbb{C}\setminus \{0\}$}
.
\end{equation}
Furthermore, $g$ satisfies the following condition:  
\begin{equation}\label{22/3/6/12:26}
g'(t)t -g(t) \ge 0
\qquad 
\mbox{for all $t> 0$}.
\end{equation}
We remark that the condition \eqref{22/3/6/12:26} is  used mainly to ensure the existence of ground states to \eqref{eq:1.1}.


\item 
For $d\ge 3$, there exist $\max\{\frac{2}{d-2}, 1\}<p_{1} <\frac{d+2}{d-2}$,
 $\max\{\frac{d+2}{d-2}-2, p_{1}\}<p_{2}<\frac{d+2}{d-2}$ and $C_{1},C_{2}>0$ such that   
\begin{equation}\label{21/12/23/15:11}
\limsup_{t\to 0}\frac{|g'(t)|}{t^{p_{1}-1}} \le C_{1},
\qquad 
\lim_{t\to \infty}\frac{g'(t)}{t^{p_{2}-1}}=C_{2}
.
\end{equation}
Note that the assumption about $p_{1}$ and $p_{2}$ reads: if $d=3$, then $2< p_{1}< p_{2}$ and $3<p_{2}<5$; and if $d\ge 4$, then $1<p_{1}< p_{2}<\frac{d+2}{d-2}$. Moreover, the second condition in \eqref{21/12/23/15:11} implies 
\begin{equation}\label{21/12/25/15:11}
\lim_{t\to \infty}
\Big| \frac{g(t)}{t^{p_{2}}} -\frac{C_{2}}{p_{2}} \Big|=0
.
\end{equation}
 

\item Define 
\begin{equation}\label{21/12/26/17:42} 
2^{*}:=\frac{2d}{d-2}, 
\qquad 
G(t):=\int_{0}^{t}g(s)\,ds 
.
\end{equation}
Then, for $d\ge 3$, there exists $C_{3}>0$ such that 
\begin{equation}\label{22/3/8/11:29}
G(t)-\frac{1}{2^{*}}g(t)t \ge C_{3} t^{p_{2}+1}  
\qquad 
\mbox{for all $t\ge 0$}
,
\end{equation}
where $p_{2}$ is the same constant as in \eqref{21/12/23/15:11}. 
 We remark that the function on the left-hand side of \eqref{22/3/8/11:29} appears in the Pohozaev's identity (see \eqref{18/11/24/17:41}). 
\end{enumerate} 
\end{assumption}

We give some remarks about Assumption \ref{21/12/23/14:45}:
\begin{remark}\label{22/1/9/9:1}
\begin{enumerate}
\item 
The assumption $p_{2}>\frac{d+2}{d-2}-2$ is related to the existence of ground states; In fact, it is known (see \cite{AIKN6,Wei-Wu}) that when $d=3$, $1<p\le 3$ and $g(u)=|u|^{p-1}u$, there exists $\omega_{c}>0$ such that if $\omega > \omega_{c}$, then there is no ground state to \eqref{eq:1.1}. 


\item 
The condition \eqref{22/3/6/12:26} implies 
\begin{equation}\label{22/3/5/17:35}
g(t)t - 2 G(t) \ge 0
\qquad 
\mbox{for all $t> 0$}.
\end{equation}
Furthermore, it follows from  \eqref{22/3/8/11:29} and \eqref{22/3/5/17:35} 
 that 
\begin{equation}\label{22/3/3/8:40}
\Big( \frac{1}{2}-\frac{1}{2^{*}} \Big)g(t)
\ge 
\frac{1}{t}\big\{ G(t)-\frac{1}{2^{*}}g(t)t \big\}
\ge C_{3} t^{p_{2}} > 0 
\qquad 
\mbox{for all $t> 0$}
.
\end{equation}
In particular, $g$ is positive on $(0,\infty)$. 

\item 
The assumption \eqref{21/12/23/15:11} together with $g(0)=0$ shows 
 that the following hold for all $t\ge 0$:
\begin{equation}\label{21/12/24/13:39}
|g'(t)| \lesssim t^{p_{1}-1} + t^{p_{2}-1},
\quad 
|g(t)| \lesssim t^{p_{1}}+t^{p_{2}},
\quad 
|G(t)| \lesssim t^{p_{1}+1}+t^{p_{2}+1}
,
\end{equation}
where the implicit constants depend only on $p_{1}$, $p_{2}$, $C_{1}$ 
 and $C_{2}$.  


\item 
Let $d\ge 3$, $\max\{\frac{2}{d-2},1\} <p_{1} <  \frac{d+2}{d-2}$, $\max\{\frac{d+2}{d-2}-2, p_{1}\}<p_{2} <\frac{d+2}{d-2}$, and $p_{1}<q<p_{2}$. 
 Then, typical models of the function $g$ satisfying 
 all conditions in Assumption \ref{21/12/23/14:45} are the following:
\begin{align}
\label{ex1}
g(t)&=t^{p_{2}}, 
\qquad 
g(t)=t^{p_{1}} + t^{p_{2}}, 
\qquad 
g(t)=t^{p_{1}}+t^{q} + t^{p_{2}},
\\[6pt]
\label{ex2}
g(t)&=
t^{p_{1}}- \gamma t^{q} + t^{p_{2}} 
\quad 
\mbox{with a small $\gamma>0$ depending on $d$, $p_{1}$, $p_{2}$ and $q$}
.
\end{align}
\end{enumerate} 
\end{remark}


Next, we make it clear the meaning of ground state. In this paper, by a ground state to \eqref{eq:1.1}, we mean a minimizer of the following problem: 
\begin{equation}\label{18/12/07/09:20}
m_{\omega}
:=\inf\{ 
\mathcal{S}_{\omega}(u) \colon  
u \in H^{1}(\mathbb{R}^{d})\setminus \{0\},~\mathcal{N}_{\omega}(u)=0  
\},
\end{equation}
where $\mathcal{S}_{\omega}$ and $\mathcal{N}_{\omega}$ denote the action and the Nehari functional associated with \eqref{eq:1.1}, namely, 
\begin{align}
\label{eq:1.5}
\mathcal{S}_{\omega}(u)
&:=
\frac{1}{2}\|\nabla u \|_{L^{2}}^{2}
+
\frac{\omega}{2}\|u\|_{L^{2}}^{2}
-
\frac{1}{2^{*}}\|u\|_{L^{2^{*}}}^{2^{*}}
-
\int_{\mathbb{R}^{d}}G(|u|)
,
\\[6pt]
\label{20/8/5/10:4}
\mathcal{N}_{\omega}(u)
&:=
\mathcal{S}_{\omega}'(u) u
=
\|\nabla u \|_{L^{2}}^{2}
+
\omega \|u\|_{L^{2}}^{2}
-
\|u\|_{L^{2^{*}}}^{2^{*}}
-
\int_{\mathbb{R}^{d}}g(|u|)|u| 
.
\end{align}


Under Assumption \ref{21/12/23/14:45}, we can prove the existence of ground states to \eqref{eq:1.1} (see, e.g., Section A of \cite{AIIKN}, Proposition 2.0.1 of \cite{AIKN3} and Section 4.2 of \cite{Willem}): 
\begin{proposition}\label{21/11/26/15:35}  
Assume $d\ge 3$. Then, under Assumption \ref{21/12/23/14:45}, for any $\omega>0$, there exists a ground state $\Phi_{\omega}$ to \eqref{eq:1.1} with the following properties: $\Phi_{\omega}$ is a solution to \eqref{eq:1.1}; 
 $\Phi_{\omega} \in H^{2}(\mathbb{R}^{d})\cap C^{2}(\mathbb{R}^{d})$;  
 $\Phi_{\omega}$ is positive,  radially symmetric about $0$, and strictly decreasing as a function of $|x|$ (hence $\|\Phi_{\omega}\|_{L^{\infty}}=\Phi_{\omega}(0)$).  
\end{proposition}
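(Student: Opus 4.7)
The plan is to construct the ground state $\Pho$ as a minimizer of the Nehari-type problem \eqref{18/12/07/09:20} via the direct method, combined with a Br\'ezis--Nirenberg strict-inequality argument to overcome the loss of compactness caused by the Sobolev-critical term. First I would verify $m_{\omega}>0$ and that every minimizing sequence is bounded in $H^{1}(\Rd)$. On the Nehari set, the identity
\begin{equation*}
\mathcal{S}_{\omega}(u)=\mathcal{S}_{\omega}(u)-\tfrac{1}{2^{*}}\mathcal{N}_{\omega}(u)=\tfrac{1}{d}\bigl(\|\nabla u\|_{L^{2}}^{2}+\omega\|u\|_{L^{2}}^{2}\bigr)-\int_{\Rd}\!\bigl(G(|u|)-\tfrac{1}{2^{*}}g(|u|)|u|\bigr)
\end{equation*}
together with \eqref{22/3/5/17:35}--\eqref{22/3/8/11:29} and Sobolev embedding gives both a positive lower bound for $m_{\omega}$ and coercivity of $\mathcal{S}_{\omega}$ on the Nehari set. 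Applying Schwarz symmetric decreasing rearrangement to a minimizing sequence $\{u_{n}\}$ and then restoring the Nehari constraint by a scalar multiple (whose existence and uniqueness follow from \eqref{22/3/6/12:26}) produces a new minimizing sequence of nonnegative, radial, radially nonincreasing functions, which, up to a subsequence, converges weakly in $H^{1}(\Rd)$ and a.e.\ to a candidate $\Pho$.

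The main obstacle, and the hardest step, is to exclude vanishing $\Pho\equiv 0$, which in the critical regime reduces to establishing the strict energy inequality
\begin{equation*}
m_{\omega}<\tfrac{1}{d}\,\mathcal{S}^{d/2},
\end{equation*}
where $\mathcal{S}$ is the best Sobolev constant. I would test $\mathcal{S}_{\omega}$, along its Nehari ray, on a rescaled Aubin--Talenti profile $W_{\lambda}$ (suitably truncated when $d=3,4$, since then $W\notin L^{2}(\Rd)$) and compare the positive contribution $\omega\|W_{\lambda}\|_{L^{2}}^{2}$ against the negative gain produced by the pointwise lower bound $G(t)-\tfrac{1}{2^{*}}g(t)t\ge C_{3}t^{p_{2}+1}$ in \eqref{22/3/8/11:29}. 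The hypothesis $p_{2}>\tfrac{d+2}{d-2}-2$ is precisely what makes the gain dominate the $L^{2}$-cost and the truncation errors for an appropriate choice of $\lambda$, even in the delicate low dimensions $d=3,4$. Once the strict inequality is secured, a Br\'ezis--Lieb splitting applied to the weakly convergent minimizing sequence rules out concentration at a single point and forces $\Pho\not\equiv 0$ with $\mathcal{S}_{\omega}(\Pho)=m_{\omega}$ and $\mathcal{N}_{\omega}(\Pho)=0$.

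A Lagrange multiplier argument then yields $\mathcal{S}_{\omega}'(\Pho)=\mu\,\mathcal{N}_{\omega}'(\Pho)$ for some $\mu\in\R$; pairing with $\Pho$ and using \eqref{22/3/6/12:26}, which gives $\mathcal{N}_{\omega}'(\Pho)\Pho\le -(2^{*}-2)\|\Pho\|_{L^{2^{*}}}^{2^{*}}<0$, forces $\mu=0$, so $\Pho$ weakly solves \eqref{eq:1.1}. Elliptic regularity (Br\'ezis--Kato to boost integrability, followed by Schauder estimates) upgrades $\Pho$ to $H^{2}(\Rd)\cap C^{2}(\Rd)$. The strong maximum principle applied to the nonnegative solution yields $\Pho>0$, and since $\Pho$ is radial, nonincreasing and smooth, the characterization of equality in the P\'olya--Szeg\H{o} inequality forces strict radial monotonicity.
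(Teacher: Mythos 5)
The paper does not prove Proposition~\ref{21/11/26/15:35} internally; it delegates the proof to Section~A of \cite{AIIKN}, Proposition~2.0.1 of \cite{AIKN3}, and Section~4.2 of \cite{Willem}. So there is no internal argument to compare against. The plan you sketch (Nehari minimization over symmetric-decreasing functions; a Br\'ezis--Nirenberg strict inequality against the pure-bubble threshold established with truncated Aubin--Talenti profiles; Br\'ezis--Lieb splitting to exclude concentration; a Lagrange-multiplier computation using \eqref{22/3/6/12:26} to force the multiplier to vanish; Br\'ezis--Kato and Schauder for $H^2\cap C^2$ regularity; strong maximum principle for positivity) is exactly the standard route and is what those sources implement, and the scaling balance you identify between the $L^2$-cost of the truncated bubble and the gain coming from \eqref{22/3/8/11:29} with $p_2 > \frac{d+2}{d-2}-2$ is the correct one in all dimensions including $d=3,4$.

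However, the last step is wrong as stated: the claim that ``the characterization of equality in the P\'olya--Szeg\H{o} inequality forces strict radial monotonicity'' does not hold. A nonnegative radial nonincreasing function is identically equal to its own Schwarz rearrangement, so P\'olya--Szeg\H{o} holds with equality \emph{vacuously} for any such function, plateaus or not; the Brothers--Ziemer equality analysis gives no information here and does not exclude constant stretches. What actually yields the strict decrease is the equation: writing the radial ODE
\begin{equation*}
\bigl(r^{d-1}\Phi_\omega'(r)\bigr)'
=
r^{d-1}\Bigl(\omega\,\Phi_\omega(r)-\Phi_\omega(r)^{\frac{d+2}{d-2}}-g(\Phi_\omega(r))\Bigr),
\end{equation*}
one combines positivity and decay of $\Phi_\omega$ with a maximum-principle or Hopf-type argument on $\Phi_\omega'$ (equivalently, one invokes the Gidas--Ni--Nirenberg moving-plane theorem, which produces strict monotonicity together with radial symmetry) to conclude $\Phi_\omega'(r)<0$ for all $r>0$. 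With this repair, the outline is sound.
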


In order to describe our main results (Theorem \ref{20/8/17/11:39} and 
 Theorem \ref{21/9/17/9:1}), we  introduce several symbols: 
\begin{notation}\label{21/12/23/15:24}
Let $d\ge 3$. 
\begin{enumerate}
\item 
For $\omega>0$, we use $\Phi_{\omega}$ to denote a ground state to \eqref{eq:1.1} given in Proposition \ref{21/11/26/15:35} (we do not need the uniqueness of ground states in this paper).


\item 
We use $H_{\rm{rad}}^{1}(\mathbb{R}^{d})$ to denote the set of functions in $H^{1}(\mathbb{R}^{d})$ which is radially symmetric about $0$. 


\item 
For $f,g \in L^{2}(\mathbb{R}^{d})$, we use $\langle f, g\rangle$ to denote the  inner product in the real Hilbert space $L_{\rm{real}}^{2}(\mathbb{R}^{d})$, namely,   
\begin{equation}\label{21/12/23/15:29}
\langle f, g \rangle :=\Re\int_{\mathbb{R}^{d}}f(x) \overline{g(x)}\,dx 
.
\end{equation} 


\item 
For $\omega>0$, we use $L_{\omega,+}$ to denote (the real-part of) the linearized operator around $\Phi_{\omega}$, namely,  
\begin{equation}\label{18/12/07/10:05}
L_{\omega,+} 
:=  
-\Delta  + V_{\omega,+}
\qquad 
\mbox{with}~~
V_{\omega,+}
:=
\omega - 
\frac{d+2}{d-2} \Phi_{\omega}^{\frac{4}{d-2}}
-g'(\Phi_{\omega}) 
.
\end{equation}
Note that $L_{\omega,+}$ is a self-adjoint operator on the real Hilbert space $L_{\rm{real}}^{2}(\mathbb{R}^{d})$ with domain $H^{2}(\mathbb{R}^{d})$. Furthermore, by a standard argument in the spectral theory, we see that 
 the essential spectrum $\sigma_{\rm{ess}}(L_{\omega,+})$ is identical to $[\omega, \infty)$, so that the discrete spectrum is included in $(-\infty, \omega)$:
\begin{equation}\label{22/3/22/11:4}
\sigma_{\rm{disc}}(L_{\omega,+}) \subset (-\infty, \omega). 
\end{equation}

\item 
We use $B_{\omega} \colon H^{1}(\mathbb{R}^{d})\times H^{1}(\mathbb{R}^{d})\to \mathbb{R}$ to denote the bilinear form associated with $L_{\omega,+}$, namely,  
\begin{equation}\label{21/10/9/9:29} 
B_{\omega}(u,v)
:=
\langle \nabla u,   \nabla v \rangle
+
\langle V_{\omega,+} u, v \rangle
.
\end{equation}

\end{enumerate}  
\end{notation}

\begin{remark}\label{21/10/7/10:57}
\begin{enumerate}
\item 
We can verify that the action $\mathcal{S}_{\omega}$ and the Nehari functional $\mathcal{N}_{\omega}$ (see \eqref{eq:1.5} and \eqref{20/8/5/10:4}) are $C^{2}$ on the real Hilbert space $H_{\rm{real}}^{1}(\mathbb{R}^{d})$, and   
\begin{align}
\label{21/12/23/15:59}
\mathcal{S}_{\omega}'(u) v
&=
\langle -\Delta u  + \omega u  
-|u|^{\frac{4}{d-2}}u-g(u), v  
\rangle 
\quad 
\mbox{for all $u,v \in H^{1}(\mathbb{R}^{d})$}
,
\\[6pt]
\label{21/9/16/14:33}
[\mathcal{S}_{\omega}''(\Phi_{\omega})u ]v
&=
B_{\omega}(u,v)
\quad 
\mbox{for all $u,v \in H^{1}(\mathbb{R}^{d})$}
,
\\[6pt]
\label{21/9/15/16:23}
\mathcal{N}_{\omega}'(\Phi_{\omega})u
&=
\langle L_{\omega,+} \Phi_{\omega} 
, u
\rangle
\quad 
\mbox{for all $u \in H^{1}(\mathbb{R}^{d})$}
.
\end{align}
\item 
Since $\Phi_{\omega}$ is a solution to \eqref{eq:1.1}, 
 differentiating the equation, we see that 
 the derivatives of $\Phi_{\omega}$ belong to the kernel of $L_{\omega,+}$: 
\begin{equation}\label{21/9/18/10:40}
\partial_{1}\Phi_{\omega}, \ldots, \partial_{d}\Phi_{\omega} 
\in  \operatorname{Ker}{L_{\omega,+}}
.
\end{equation}
\end{enumerate}
\end{remark}

Now, we state the main results of this paper. The first one is concerning 
 the nondegeneracy of ground states at high frequencies:
\begin{theorem}\label{20/8/17/11:39}
Assume $d=3,4$. Under Assumption \ref{21/12/23/14:45} and Notation \ref{21/12/23/15:24}, there exists $\omega_{\rm{ng}}>0$ such that the following hold for all  $\omega>\omega_{\rm{ng}}$:
\begin{enumerate}
\item[(i)]  The ground state $\Phi_{\omega}$ to \eqref{eq:1.1} is nondegenerate in $H_{\rm{rad}}^{1}(\mathbb{R}^{d})$; namely, if  $u \in H_{\rm{rad}}^{1}(\mathbb{R}^{d})$ satisfies $B_{\omega}(u,v)=0$ for all $v \in H_{\rm{rad}}^{1}(\mathbb{R}^{d})$, then $u$ must be trivial ($u\equiv 0$). 

\item[(ii)]
The kernel of $L_{\omega,+}$ consists of the derivatives of $\Phi_{\omega}$, namely,    
\begin{equation}\label{20/8/5/10:52}
\operatorname{Ker}{L_{\omega,+}}
=
\operatorname{span}{
\{\partial_{1}\Phi_{\omega},\ldots, \partial_{d}\Phi_{\omega} \}
}.
\end{equation}
\end{enumerate} 
\end{theorem}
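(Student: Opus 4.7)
My approach for part (i) is a proof by contradiction using a concentration--compactness rescaling at the Aubin--Talenti scale; part (ii) then follows from (i) via a spherical harmonic decomposition of $L_{\omega,+}$.

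Suppose (i) fails: there exist $\omega_n\to\infty$ and $u_n\in H_{\rm{rad}}^{1}(\Rd)\setminus\{0\}$ with $B_{\omega_n}(u_n,v)=0$ for every $v\in H_{\rm{rad}}^{1}(\Rd)$. Let $\lambda_n>0$ be the natural scaling (essentially $\lambda_n=\Phon(0)^{-2/(d-2)}$) so that $\TPhon(y):=\lambda_n^{(d-2)/2}\Phon(\lambda_n y)$ converges to the Aubin--Talenti function $W$ in $\dHRd$ by Lemma \ref{theorem:3.1} (note $\lambda_n\to 0$ and $\lambda_n^{2}\omega_n\to 0$). Rescale $u_n$ in the same way to $\widetilde{u}_n$ and normalize $\|\widetilde{u}_n\|_{\dHRd}=1$; the $\dot H^1$ (rather than $L^2$) normalization is forced on us because $W\notin L^{2}(\Rd)$ for $d=3,4$. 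The equation $B_{\omega_n}(u_n,\cdot)=0$ transforms into
\begin{equation*}
-\Delta \widetilde{u}_n + \lambda_n^{2}\omega_n \widetilde{u}_n - \tfrac{d+2}{d-2}\TPhon^{4/(d-2)}\widetilde{u}_n - \lambda_n^{2} g'\bigl(\lambda_n^{-(d-2)/2}\TPhon\bigr)\widetilde{u}_n = 0.
\end{equation*}
Since $p_{2}<\tfrac{d+2}{d-2}$, the rescaled $g'$-coefficient vanishes in suitable local norms, and passing to a weak $\dHRd$-limit $\widetilde{u}_n\rightharpoonup u_\infty$, $u_\infty$ is a radial $\dot H^1$-solution of $-\Delta u_\infty - \tfrac{d+2}{d-2}W^{4/(d-2)}u_\infty = 0$. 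The unique (up to scalar) radial element of the $\dot H^1$-kernel of this limiting operator is $\Lambda W := \tfrac{d-2}{2}W + x\cdot\nabla W$, so $u_\infty = c\,\Lambda W$.

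The main obstacle, and the reason $d=3,4$ require new ideas, is to show $c=0$: for $d\ge 5$ one has $\Lambda W\in L^{2}(\Rd)$ and $c$ can be isolated by an $L^2$ pairing, but this is unavailable here since $\Lambda W$ decays only like $|x|^{-(d-2)}$. Following and adapting the strategy of \cite{AIK,AIIKN}, I would test $B_{\omega_n}(u_n,\cdot)=0$ against a carefully chosen auxiliary function constructed from $\Phon$ and its scaling (e.g. a rescaled $\partial_\omega\Pho$, or a suitably $\dot H^1$-truncated $\Lambda \Phon$), and expand the resulting scalar identity in powers of $\lambda_n$. The leading balance is controlled by the Pohozaev identity for $\Pho$ combined with the strict positivity $G(t)-\tfrac{1}{2^{*}}g(t)t\ge C_{3} t^{p_{2}+1}$ of \eqref{22/3/8/11:29}, producing a non-cancelling strictly positive subcritical contribution that forces $c=0$. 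Once $u_\infty\equiv 0$, testing the rescaled equation against $\widetilde{u}_n$ itself upgrades weak to strong convergence $\widetilde{u}_n\to 0$ in $\dHRd$ (the quadratic-in-$\widetilde{u}_n$ coupling vanishes by Rellich compactness combined with the $L^{d/2}$-smallness of $\TPhon^{4/(d-2)}$ outside large balls), contradicting $\|\widetilde{u}_n\|_{\dHRd}=1$ and proving (i).

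For part (ii), since $V_{\omega,+}$ is radially symmetric (as $\Pho$ is), $L_{\omega,+}$ preserves each spherical harmonic subspace $\mathcal{H}_k\subset L^{2}(\Rd)$, acting on its radial factor as $L_{\omega,+}^{(k)} = L_{\omega,+}^{(0)} + k(k+d-2)/|x|^{2}$. By (i), $\ker L_{\omega,+}^{(0)}=\{0\}$. For $k=1$: the derivatives $\partial_j\Pho$ lie in $\ker L_{\omega,+}$ by \eqref{21/9/18/10:40}, and since $\Pho$ is strictly decreasing in $|x|$ by Proposition \ref{21/11/26/15:35}, the radial profile $\partial_r\Pho$ has no zeros in $(0,\infty)$; Sturm oscillation theory then shows that $0$ is the lowest eigenvalue of $L_{\omega,+}^{(1)}$ on radial functions with one-dimensional kernel, contributing exactly $\operatorname{span}\{\partial_1\Pho,\ldots,\partial_d\Pho\}$. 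For $k\ge 2$: since $k(k+d-2)-(d-1)\ge d+1>0$, the strict operator inequality $L_{\omega,+}^{(k)} > L_{\omega,+}^{(1)} \ge 0$ yields $\ker L_{\omega,+}^{(k)}=\{0\}$. Reassembling the modes gives \eqref{20/8/5/10:52}.
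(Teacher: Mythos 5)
Your spherical-harmonic argument for part (ii) matches the paper's (a Sturm--Liouville comparison after the decomposition, built on the sign information in \eqref{21/9/24/7:31}), and is fine. The genuine gap is in part (i), and it is not a matter of missing details: the structure of your argument is different from what actually works, and the part you describe vaguely is precisely where the whole difficulty lives.

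First, the normalization. You normalize the rescaled kernel element in $\dot H^1$ and announce that the goal is to prove the weak limit is $u_\infty=0$, then to upgrade to strong $\dot H^1$-convergence for a contradiction. The paper instead normalizes in $L^\infty$ (i.e.\ $\|\widetilde z_n\|_{L^\infty}=1$), and proves that the limit $c_\infty\Lambda W$ has $c_\infty\neq 0$: the uniform pointwise decay $|\widetilde z_n(x)|\lesssim(1+|x|)^{-(d-2)}$, together with convergence in $C^1_{\mathrm{loc}}$, prevents the $L^\infty$-mass from escaping, so the limit is nonzero. Under your $\dot H^1$-normalization you have neither this lower bound on the limit nor any mechanism that says the $\dot H^1$-mass cannot wander off, so you have not set up the dichotomy cleanly.

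Second, and more importantly, the conclusion is not $c=0$. The paper's contradiction is a quantitative arithmetic statement about $p_2$, obtained by combining a Pohozaev-type scalar identity (Claim 2, involving $\rho_n$ whose limit is $\frac{(d+2)-(d-2)p_2}{4}\cdot\frac{C_2}{p_2}\langle W^{p_2},\Lambda W\rangle c_\infty$) with sharp asymptotics for $\langle Q_n,\widetilde z_n\rangle\cdot s_n/t_n$ extracted from the perturbed resolvent. For $d=3$ the resulting inequality $\frac{(d+2)-(d-2)p_2}{4}\ge\frac{1-\varepsilon}{2}$ contradicts $p_2>3$; for $d=4$, the equality $\frac{(d+2)-(d-2)p_2}{4}=1$ contradicts $p_2>1$. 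To get $s_n/t_n$ under control one needs the shifted profile $\mu(\omega)$ of Lemma \ref{18/11/27/11:20} (chosen so that $\{1+(-\Delta+s)^{-1}V\}\zeta_\omega\perp V\Lambda W$), the resolvent estimates with the $\delta(s)$ scaling of Proposition \ref{18/11/17/07:17} and Lemmas \ref{21/1/10/15:37}--\ref{20/12/22/13:55}, and the key asymptotic balance $A_1\beta(s(\omega))=t(\omega)(1+o_\omega(1))$ of Proposition \ref{18/12/12/13:21}. Your sentence ``testing against a carefully chosen auxiliary function \ldots producing a non-cancelling strictly positive subcritical contribution that forces $c=0$'' is exactly where all of this has been compressed away; it is not a step that follows from what you wrote, and in the correct proof one never establishes $c=0$ at all. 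As written, the proposal would not close.
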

\begin{remark}\label{22/3/8/18:16}
When $d=3$ and $g(u)=|u|^{p-1}u$ with $3<p<5$, the nondegeneracy of ground states to \eqref{eq:1.1} had been announced in \cite{AIK}. However, there is a flaw in the proof. In this paper, we  modify the argument in \cite{AIK} and give a complete proof.  
\end{remark} 
\begin{remark}\label{22/3/6/17:50}
It is certain that the same argument as in \cite{AIIKN} proves 
 the nondegeneracy of ground sates to \eqref{eq:1.1} at high frequencies ($\omega\gg 1$) for all $d \ge 5$. 
\end{remark}

The second result is concerning the negative eigenvalues of $L_{\omega,+}$:  
\begin{theorem}\label{21/9/17/9:1}
Assume $d\ge 3$. Then, under Assumption \ref{21/12/23/14:45} and Notation \ref{21/12/23/15:24},  the following hold for all $\omega>0$:  
\begin{enumerate}
\item[(i)] 
Let $u \in H^{1}(\mathbb{R}^{d})$. 
 If $\mathcal{N}_{\omega}'(\Phi_{\omega})u=0$, then $B_{\omega}(u,u)\ge 0$.

\item[(ii)] 
$L_{\omega,+}$ has exactly one negative eigenvalue on the real Hilbert space $L_{\rm{real}}^{2}(\mathbb{R}^{d})$. Furthermore,  the multiplicity of the negative eigenvalue is one.  
\end{enumerate} 
\end{theorem}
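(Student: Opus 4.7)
The plan is to derive (i) from the fact that $\Phi_{\omega}$ is a constrained minimizer on the Nehari manifold, and then to obtain (ii) from (i) by a min--max / dimension-counting argument, using a direct computation of $B_{\omega}(\Phi_{\omega},\Phi_{\omega})$ to produce at least one negative eigenvalue.

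For (i), the starting observation is that, using the equation \eqref{eq:1.1} satisfied by $\Phi_{\omega}$, one can compute
\begin{equation*}
\mathcal{N}_{\omega}'(\Phi_{\omega})\Phi_{\omega}
=\langle L_{\omega,+}\Phi_{\omega},\Phi_{\omega}\rangle
=-\tfrac{4}{d-2}\|\Phi_{\omega}\|_{L^{2^{*}}}^{2^{*}}
-\int_{\mathbb{R}^{d}}\bigl(g'(\Phi_{\omega})\Phi_{\omega}-g(\Phi_{\omega})\bigr)\Phi_{\omega}<0,
\end{equation*}
where negativity uses \eqref{22/3/6/12:26} and $\Phi_{\omega}>0$. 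In particular the Nehari constraint $\mathcal{N}_{\omega}=0$ is a smooth codimension-one submanifold of $H_{\mathrm{real}}^{1}(\mathbb{R}^{d})$ near $\Phi_{\omega}$, and $\Phi_{\omega}$ is a minimizer of $\mathcal{S}_{\omega}$ on it. For $u\in H^{1}(\mathbb{R}^{d})$ with $\mathcal{N}_{\omega}'(\Phi_{\omega})u=0$, I would build a $C^{2}$ curve $\gamma(t)=\Phi_{\omega}+tu+\lambda(t)\Phi_{\omega}$ on the Nehari manifold by choosing $\lambda(t)$ via the implicit function theorem applied to $t\mapsto \mathcal{N}_{\omega}(\Phi_{\omega}+tu+\lambda\Phi_{\omega})$; since $\mathcal{N}_{\omega}'(\Phi_{\omega})\Phi_{\omega}\ne 0$ this is solvable, and $\lambda(0)=\lambda'(0)=0$ because $\mathcal{N}_{\omega}'(\Phi_{\omega})u=0$. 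Differentiating $t\mapsto \mathcal{S}_{\omega}(\gamma(t))$ twice at $t=0$, the first-order Lagrange terms vanish (since $\mathcal{S}_{\omega}'(\Phi_{\omega})=0$) and we obtain $0\le \mathcal{S}_{\omega}''(\Phi_{\omega})[u,u]=B_{\omega}(u,u)$ by \eqref{21/9/16/14:33}.

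For the existence of a negative eigenvalue in (ii), the same computation above shows $B_{\omega}(\Phi_{\omega},\Phi_{\omega})<0$, so by the variational characterization of the bottom of the spectrum and \eqref{22/3/22/11:4}, $L_{\omega,+}$ has at least one negative eigenvalue. For the opposite bound, I would argue by contradiction: assume the sum of multiplicities of negative eigenvalues is $\ge 2$ and pick two $L^{2}$-orthogonal eigenfunctions $\phi_{1},\phi_{2}$ with eigenvalues $\lambda_{1},\lambda_{2}<0$. For any $(a,b)\in\mathbb{R}^{2}\setminus\{0\}$ and $u=a\phi_{1}+b\phi_{2}$,
\begin{equation*}
B_{\omega}(u,u)=\langle L_{\omega,+}u,u\rangle
=a^{2}\lambda_{1}\|\phi_{1}\|_{L^{2}}^{2}+b^{2}\lambda_{2}\|\phi_{2}\|_{L^{2}}^{2}<0.
\end{equation*}
The single linear equation $a\langle L_{\omega,+}\Phi_{\omega},\phi_{1}\rangle+b\langle L_{\omega,+}\Phi_{\omega},\phi_{2}\rangle=0$ in two unknowns has a nontrivial solution, yielding $u\ne 0$ with $\mathcal{N}_{\omega}'(\Phi_{\omega})u=0$ and $B_{\omega}(u,u)<0$, contradicting (i). This simultaneously handles the ``exactly one'' and ``multiplicity one'' conclusions.

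The main obstacle is the rigorous justification of the second-order constrained optimality in (i): one must verify that the implicit function construction of $\gamma(t)$ works in $H^{1}(\mathbb{R}^{d})$ with enough regularity to legitimately differentiate $\mathcal{S}_{\omega}\circ\gamma$ twice, and that the complex-valued setting reduces cleanly to the real case (since $\Phi_{\omega}$ is real, $L_{\omega,+}$ and $B_{\omega}$ decouple into real and imaginary parts, and it suffices to treat real-valued $u$). Once this reduction and the smoothness of the Nehari manifold near $\Phi_{\omega}$ are in place, both statements follow from essentially algebraic manipulations.
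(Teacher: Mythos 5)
Your proposal is correct and follows essentially the same route as the paper's proof: for (i), the computation $\mathcal{N}_{\omega}'(\Phi_{\omega})\Phi_{\omega}=\langle L_{\omega,+}\Phi_{\omega},\Phi_{\omega}\rangle<0$ using \eqref{22/3/6/12:26}, the implicit-function-theorem curve $\Phi_{\omega}+tu+\lambda(t)\Phi_{\omega}$ on the Nehari manifold with $\lambda(0)=\lambda'(0)=0$, and second-order constrained optimality; for (ii), existence of a negative eigenvalue from $B_{\omega}(\Phi_{\omega},\Phi_{\omega})<0$ together with \eqref{22/3/22/11:4}, and at most one from applying (i) to the codimension-one subspace orthogonal to $L_{\omega,+}\Phi_{\omega}$. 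The only cosmetic difference is that the paper invokes the min--max principle with $\phi=L_{\omega,+}\Phi_{\omega}$ to get $E_{2}\ge 0$, whereas you obtain the same inequality by exhibiting a nontrivial combination of two hypothetical negative eigenfunctions that lies in that orthogonal subspace.
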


The rest of the paper is organized as follows. 
 In Section \ref{20/8/18/11:39}, we introduce the notation besides Notation \ref{21/12/23/15:24}. 
 In Section \ref{18/11/05/09:33}, we give estimates for free and perturbed resolvents. 
 In Section \ref{18/11/05/10:01}, we give properties of ground states to \eqref{eq:1.1}.  
 In Section \ref{18/09/09/17:30}, we give a proof of Theorem \ref{20/8/17/11:39} . 
 In Section \ref{21/10/7/10:45}, we give a proof of Theorem \ref{21/9/17/9:1}. 

\subsection*{Acknowledgements}
The authors would like to thank Professor Hiroaki Kikuchi for  helpful discussion. This work was supported by JSPS KAKENHI Grant Number 20K03697. 


\section{Preliminaries}\label{20/8/18/11:39}

In this section, besides Notation \ref{21/12/23/15:24}, 
 we introduce symbols used in this paper, with auxiliary results:  
\begin{notation}\label{21/12/29/11:17}
\begin{enumerate}

\item 
Let $X$ and $Y$ be positive quantities.   
 Then, we use $X \lesssim Y$ and $X \ll Y$ to indicate the inequalities    
 $X \le C Y$ and $CX \le Y$, respectively, where $C>1$ is some constant independent of $\omega$; Note that the implicit constants in these notation are allowed to depend on the dimension $d$ and the constants $p_{1}$, $p_{2}$, $C_{1}$, $C_{2}$ and $C_{3}$ given in Assumption \ref{21/12/23/14:45}. 
 Furthermore, we use $X \sim Y$ to indicate that $X\lesssim Y \lesssim X$.


\item 
For $d\ge 3$, we use $\Lambda W$ to denote the function defined as  
\begin{equation}\label{eq:1.19}
\Lambda W
:= 
\frac{d-2}{2}W+x\cdot \nabla W
.
\end{equation}
It is worthwhile noting that the Aubin-Talenti function $W$ (see \eqref{eq:1.16}) is degenerate in the sense that  
\begin{equation}\label{20/9/13/9:52}
(-\Delta + V )\Lambda W =0 
\qquad 
\mbox{with}~~V:=-\frac{d+2}{d-2}W^{\frac{4}{d-2}}
.
\end{equation} 
By a computation involving integration by parts, we can verify that the following holds for all $d\ge 3$ and $\max\{ 1, \frac{2}{d-2}\} <r\le \frac{d+2}{d-2}$:
\begin{equation}\label{20/12/22/14:18}
\langle W^{r}, \Lambda W \rangle 
=
\frac{-\{ 4-(d-2)(r-1)\}}{2(r+1)}\|W\|_{L^{r+1}}^{r+1} 
.  
\end{equation} 


\item
For $d\ge 1$, we define the Fourier and inverse Fourier transformations to be that 
 for $u\in L^{1}(\mathbb{R}^{d})$,    
\begin{equation}\label{20/8/9/15:35}
\mathcal{F}[u](\xi):=\int_{\mathbb{R}^{d}}e^{-i \xi \cdot x}u(x)\,dx ,
\qquad 
\mathcal{F}^{-1}[u](\xi):=\frac{1}{(2\pi )^{d}}\int_{\mathbb{R}^{d}}e^{i \xi \cdot x}u(x)\,dx
.
\end{equation}


\item 
For $d\ge 3$ and $\lambda>0$, we define the $H^{1}$-scaling operator $T_{\lambda}$ by 
\begin{equation}\label{20/9/25/7:1}
T_{\lambda}[v](x):=\lambda^{-1}v(\lambda^{-\frac{2}{d-2}}x)
.
\end{equation}
Observe from the fundamental theorem of calculus that for any $\mu>0$, 
\begin{equation}\label{20/9/25/11:45}
T_{\mu}[W] - W 
=
\int_{1}^{\mu} \frac{d}{d\lambda}T_{\lambda}[W]\,d\lambda
=
-\frac{2}{d-2}
\int_{1}^{\mu} 
\lambda^{-1} 
T_{\lambda}[ \Lambda W]
\,d\lambda
. 
\end{equation}


\item 
For $d\ge 3$ and $\omega>0$, we define $M_{\omega}$ and $\widetilde{\Phi}_{\omega}$ as 
\begin{equation}\label{21/24/14:29}
M_{\omega}:=\|\Phi_{\omega}\|_{L^{\infty}}=\Phi_{\omega}(0),
\qquad 
\widetilde{\Phi}_{\omega}
:=
T_{M_{\omega}}[\Phi_{\omega}]
=
M_{\omega}^{-1}\Phi_{\omega}(M_{\omega}^{-\frac{2}{d-2}}x)
.
\end{equation}
It is worthwhile noting that 
\begin{align}
\label{21/12/24/14:35}
&\| \widetilde{\Phi}_{\omega}\|_{L^{\infty}}=\widetilde{\Phi}_{\omega}(0)=1
,
\\[6pt]
\label{22/3/10/16:12}
&-
\Delta \widetilde{\Phi}_{\omega}
+
M_{\omega}^{-\frac{4}{d-2}}\omega 
\widetilde{\Phi}_{\omega}
-
\widetilde{\Phi}_{\omega}^{\frac{d+2}{d-2}}
-
M_{\omega}^{-\frac{d+2}{d-2}}
g( M_{\omega} \widetilde{\Phi}_{\omega})
=
0
.
\end{align}


\item 
We define the function $\delta$ on $(0,\infty)$ by 
\begin{equation}\label{19/01/27/16:58}
\delta(s)
:=\left\{ \begin{array}{cc}
s^{\frac{1}{2}} & \mbox{if $d=3$},
\\[6pt]
\displaystyle{\frac{1}{\log{(1+s^{-1})}}} &\mbox{if $d=4$}.
\end{array} \right. 
\end{equation}
It is worthwhile noting (see (2.14) of \cite{AM}) that for $d=3,4$, there exists $\mathfrak{C}>0$ depending only on $d$ such that  
\begin{equation}\label{20/12/12/12:3}
\Big|
\int_{|\xi|\le 1} 
\frac{1}{(|\xi|^{2}+s)|\xi|^{2}} 
\,d\xi - \mathfrak{C} \delta(s)^{-1}
\Big|
\lesssim 1
\qquad 
\mbox{for all $0<s<1$}
.
\end{equation}


\item
We define the function $\beta$ on $(0,\infty)$ by 
\begin{equation}\label{19/01/15/08:25}
\beta(s):=\delta(s)^{-1}s
.
\end{equation}

Note that $\beta$ is strictly increasing on $(0,\infty)$, so that the inverse exists.


\item 
We use $\alpha$ to denote the inverse function of $\beta$. 
 When $d=3$, $\beta(s) =s^{\frac{1}{2}}$ and the domain of $\alpha$ is $(0,\infty)$. When $d=4$,  the image of $(0,\infty)$ by $\beta$ is $(0,1)$ (see \cite{AM}) and the domain of $\alpha$ is $(0,1)$.   
 Furthermore,  we can verify the following (see \cite{AM}): 
\begin{equation}\label{20/10/15/16:46}
\alpha(t)
\left\{ 
\begin{array}{ll}
=t^{2} & \mbox{for $d=3$ and $0< t <\infty$}, 
\\[6pt]
\sim 
\delta(t) t & \mbox{for $d=4$ and $0< t \le  T_{0}$}
, 
\end{array}
\right.  
\end{equation}
where $T_{0}>0$ is some constant.


\item 
We use $S^{d-1}$ to denote the sphere in $\mathbb{R}^{d}$ of radius $1$ centered at $0$, namely $S^{d-1}:=\{x\in \mathbb{R}^{d}\colon |x|=1\}$. Furthermore, $|S^{d-1}|$ denotes the area of $S^{d-1}$.   


\item 
For $d\ge 1$ and $q\ge 1$, we use $L_{\rm{rad}}^{q}(\mathbb{R}^{d})$ to denote the set of functions in $L^{q}(\mathbb{R}^{d})$ which is radially symmetric about $0$. 


\item 
For $d\ge 1$ and $q\ge 1$,  the symbol $L_{\rm{weak}}^{q}(\mathbb{R}^{d})$ denotes the weak $L^{q}$ space (Marcinkiewicz space). 

\end{enumerate}
\end{notation}


\section{Estimates for free and perturbed resolvents}
\label{18/11/05/09:33}

In this section, we give estimates for free and perturbed resolvents. 
  
Let us begin by recalling several estimates given in \cite{AM, CG}: 
\begin{proposition}[Proposition 1.2 of \cite{AM}]\label{18/11/17/07:17}
Assume $d= 3,4$. Let $\frac{d}{d-2} < q < \infty$.  
If $s>0$ is sufficiently small dependently on $d$ and $q$, 
 then  the inverse of the operator $1+(-\Delta + s)^{-1}V \colon L_{\rm rad}^{q}(\mathbb{R}^{d})\to L_{\rm rad}^{q}(\mathbb{R}^{d})$ exists;
 and the following estimates hold:  
\begin{enumerate} 
\item
If $f \in L_{\rm rad}^{q}(\mathbb{R}^{d})$, then  
\begin{equation}\label{19/02/02/11:47}
\|\{ 1+(-\Delta + s)^{-1}V\}^{-1} f \|_{L^{q}}
\lesssim 
\delta(s) s^{-1} 
\| f \|_{L^{q}}
,
\end{equation} 
where the implicit constant depends only on $d$ and $q$.  
\item 
If $f\in L_{\rm{rad}}^{q}(\mathbb{R}^{d})$ and $\langle f, V \Lambda W \rangle =0$, then 
\begin{equation}\label{18/11/11/15:50}
\|\{ 1+(-\Delta + s )^{-1}V\}^{-1} f \|_{L^{q}}
\lesssim 
\| f \|_{L^{q}}
,
\end{equation} 
where the implicit constant depends only on $d$ and $q$. 
\end{enumerate}
\end{proposition}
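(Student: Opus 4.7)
The plan is to analyze $1+(-\Delta+s)^{-1}V$ on $L^{q}_{\mathrm{rad}}(\mathbb{R}^{d})$ via a Fredholm-style decomposition adapted to the fact that the radial kernel of its formal limit $1+(-\Delta)^{-1}V$ at $s=0$ is one-dimensional and spanned by $\Lambda W$, by the classical nondegeneracy property of the Aubin--Talenti profile. Since $|V|\lesssim |x|^{-4}$ and the resolvent $(-\Delta+s)^{-1}$ is smoothing, $(-\Delta+s)^{-1}V$ is compact on $L^{q}_{\mathrm{rad}}$ for $q>\frac{d}{d-2}$, so the Fredholm alternative reduces existence of the inverse to injectivity, and the real task is the quantitative control of the inverse as $s\to 0^{+}$.

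The algebraic heart is the identity, derived from $V\Lambda W=\Delta\Lambda W$ together with $(-\Delta+s)^{-1}\Delta=-I+s(-\Delta+s)^{-1}$,
\[
(1+(-\Delta+s)^{-1}V)\Lambda W=s(-\Delta+s)^{-1}\Lambda W,
\]
which captures exactly how $\Lambda W$ fails to be in the kernel for $s>0$. Given $f\in L^{q}_{\mathrm{rad}}$, I would write $u=c\Lambda W+u_{\perp}$ with $\langle u_{\perp},V\Lambda W\rangle=0$; the projection is legitimate in this $q$-range since $|V\Lambda W|\lesssim |x|^{-(d+2)}$ gives $V\Lambda W\in L^{q'}$, and $\langle \Lambda W,V\Lambda W\rangle=-\tfrac{d+2}{d-2}\int W^{4/(d-2)}(\Lambda W)^{2}\neq 0$. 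Pairing the equation $(1+(-\Delta+s)^{-1}V)u=f$ with $V\Lambda W$ produces a scalar equation for $c$,
\[
cs\langle (-\Delta+s)^{-1}\Lambda W,V\Lambda W\rangle+s\langle u_{\perp},V(-\Delta+s)^{-1}\Lambda W\rangle=\langle f,V\Lambda W\rangle,
\]
while the complementary projection leaves an equation for $u_{\perp}$ on the subspace $\{g:\langle g,V\Lambda W\rangle=0\}$ on which $1+(-\Delta)^{-1}V$ is invertible with bounded inverse; a perturbation argument then yields $\|u_{\perp}\|_{L^{q}}\lesssim \|f\|_{L^{q}}+|c|\,\|s(-\Delta+s)^{-1}\Lambda W\|_{L^{q}}$ uniformly in small $s$. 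The leading coefficient of the scalar equation is evaluated via Plancherel,
\[
\langle (-\Delta+s)^{-1}\Lambda W,V\Lambda W\rangle=-(2\pi)^{-d}\int \frac{|\xi|^{2}}{|\xi|^{2}+s}|\mathcal{F}[\Lambda W](\xi)|^{2}\,d\xi,
\]
and because $\Lambda W\sim C|x|^{-(d-2)}$ at infinity forces $|\mathcal{F}[\Lambda W](\xi)|\sim C'|\xi|^{-2}$ near $\xi=0$, the low-frequency integrand behaves like $\frac{1}{(|\xi|^{2}+s)|\xi|^{2}}$, and \eqref{20/12/12/12:3} yields an asymptotic of order $\delta(s)^{-1}$ as $s\to 0^{+}$. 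Solving the scalar equation gives $|c|\lesssim \delta(s)s^{-1}(|\langle f,V\Lambda W\rangle|+s\|u_{\perp}\|_{L^{q}})$; combining with the bound on $u_{\perp}$ and closing the loop produces \eqref{19/02/02/11:47} in general, and \eqref{18/11/11/15:50} when $\langle f,V\Lambda W\rangle=0$, since in that case the singular direction is driven only by the $s$-small cross term.

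The main obstacle is to reconcile the Fourier-side asymptotic of the scalar coefficient, which is naturally an $L^{2}$ computation, with the $L^{q}$-bounds required throughout the full range $q>\frac{d}{d-2}$. Concretely, one must control the cross term $s\langle u_{\perp},V(-\Delta+s)^{-1}\Lambda W\rangle$ by $s\|u_{\perp}\|_{L^{q}}$ uniformly in small $s$, using the explicit kernel representation of $(-\Delta+s)^{-1}$ in dimensions $d=3,4$ together with the decay of $V$ and $\Lambda W$, and one must verify that $\|s(-\Delta+s)^{-1}\Lambda W\|_{L^{q}}$ is small enough to close the fixed-point estimate for $u_{\perp}$. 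The strict nonvanishing of $\langle \Lambda W,V\Lambda W\rangle$ (which makes the projection well-defined) follows from the sign of $V$, while the precise $\delta(s)^{-1}$ growth of the leading integral is the quantitative input that distinguishes the critical low-dimensional setting from $d\ge 5$, where $\Lambda W\in L^{2}$ and the corresponding integral stays bounded as $s\to 0^{+}$.
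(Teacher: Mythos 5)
This proposition is imported verbatim from \cite{AM} (Proposition~1.2 there); the present paper does not contain a proof of it, so there is no ``paper's own proof'' to compare against directly. Judged on its own merits, your scheme is the natural one for the low-dimensional energy-critical setting and is almost certainly the structure of the argument in \cite{AM}: compactness of $(-\Delta+s)^{-1}V$ on $L^q_{\rm rad}$ reduces invertibility to injectivity; the identity $(1+(-\Delta+s)^{-1}V)\Lambda W = s(-\Delta+s)^{-1}\Lambda W$ isolates the almost-kernel direction; decomposing $u = c\Lambda W + u_\perp$ with $\langle u_\perp, V\Lambda W\rangle = 0$ and pairing with $V\Lambda W$ yields a scalar equation whose leading coefficient $s\langle(-\Delta+s)^{-1}\Lambda W, V\Lambda W\rangle$ has magnitude $\sim s\,\delta(s)^{-1} = \beta(s)$, giving the factor $\delta(s)s^{-1}$ in \eqref{19/02/02/11:47} and its disappearance under the orthogonality $\langle f, V\Lambda W\rangle = 0$ in \eqref{18/11/11/15:50}.

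Two technical points you elide deserve flagging. First, the ``perturbation argument'' for uniform $L^q$-boundedness of the inverse of the projected operator $P(1+(-\Delta+s)^{-1}V)P$ is not automatic, because $(-\Delta+s)^{-1}V - (-\Delta)^{-1}V = -s\,(-\Delta+s)^{-1}(-\Delta)^{-1}V$ is \emph{not} small in operator norm on $L^q$ by crude bounds ($s\,\|(-\Delta+s)^{-1}\|_{L^q\to L^q}=O(1)$): one must use that $(-\Delta)^{-1}V$ is compact together with the strong convergence $s(-\Delta+s)^{-1}\to 0$ on $L^q$, so that the product tends to zero in operator norm. Second, your bound $|c|\lesssim \delta(s)s^{-1}\bigl(|\langle f,V\Lambda W\rangle|+s\|u_\perp\|_{L^q}\bigr)$ tacitly assumes $|\langle u_\perp, V(-\Delta+s)^{-1}\Lambda W\rangle|\lesssim\|u_\perp\|_{L^q}$, but H\"older together with Lemma~\ref{21/1/10/15:37} ($\|(-\Delta+s)^{-1}\Lambda W\|_{L^\infty}\lesssim\delta(s)^{-1}$) gives only $|\langle u_\perp, V(-\Delta+s)^{-1}\Lambda W\rangle|\lesssim\delta(s)^{-1}\|u_\perp\|_{L^q}$; the correct scalar bound is therefore $|c|\lesssim\delta(s)s^{-1}|\langle f,V\Lambda W\rangle|+\|u_\perp\|_{L^q}$. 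This is harmless: coupled with $\|u_\perp\|_{L^q}\lesssim\|f\|_{L^q}+|c|\,\|s(-\Delta+s)^{-1}\Lambda W\|_{L^q}$ and the smallness $\|s(-\Delta+s)^{-1}\Lambda W\|_{L^q}\lesssim s^{\frac{d-2}{2}-\frac{d}{2q}}=o(1)$ for $q>\frac{d}{d-2}$, the system still closes and yields both \eqref{19/02/02/11:47} and \eqref{18/11/11/15:50}, but the inequality as you wrote it is not justified. Finally, the Plancherel evaluation of $\langle(-\Delta+s)^{-1}\Lambda W, V\Lambda W\rangle$ needs a remark that $\Lambda W\notin L^2$ for $d=3,4$, so the identity is to be read via $V\Lambda W=\Delta\Lambda W$ and the absolute convergence of $\int\frac{|\xi|^2}{|\xi|^2+s}|\mathcal{F}[\Lambda W]|^2\,d\xi$ supplied by the $|\xi|^2$ factor; with that caveat the $\delta(s)^{-1}$ asymptotic via \eqref{20/12/12/12:3} is correct.
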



\begin{lemma}\label{18/11/05/10:29}
Assume $d \ge 1$. 
\begin{enumerate}
\item 
Let $1\le q_{1} \le q_{2} \le \infty$ 
 and $d(\frac{1}{q_{1}}-\frac{1}{q_{2}})<2$. 
 Then, the following holds for all $s>0$: 
\begin{equation}\label{18/11/05/10:41}
\|(-\Delta+s)^{-1} \|_{L^{q_{1}}(\mathbb{R}^{d}) \to L^{q_{2}}(\mathbb{R}^{d})}
\lesssim 
s^{\frac{d}{2}(\frac{1}{q_{1}}-\frac{1}{q_{2}})-1}
,
\end{equation}
where the implicit constant depends only on $d$, $q_{1}$ and $q_{2}$. 
\item 
Let $1< q_{1} \le q_{2} < \infty$ and $d(\frac{1}{q_{1}}-\frac{1}{q_{2}})<2$.
 Then, the following holds for all $s>0$: 
\begin{equation}\label{18/11/24/13:56}
\|(-\Delta+s)^{-1} \|_{L_{\rm weak}^{q_{1}}(\mathbb{R}^{d}) \to L^{q_{2}}(\mathbb{R}^{d})}
\lesssim 
s^{\frac{d}{2}(\frac{1}{q_{1}}-\frac{1}{q_{2}})-1}
,
\end{equation}
where the implicit constant depends only on $d$, $q_{1}$ and $q_{2}$.
\end{enumerate} 
\end{lemma}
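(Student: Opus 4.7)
The plan is to reduce both parts to the case $s=1$ via a scaling argument, and then to invoke classical convolution estimates for the Bessel potential kernel representing $(-\Delta+1)^{-1}$. First I would observe that the rescaling $\mathcal{T}_{s}f(x):=f(s^{1/2}x)$ conjugates the resolvent at level $s$ to that at level $1$, namely
\[
(-\Delta+s)^{-1} \;=\; s^{-1}\,\mathcal{T}_{s}\,(-\Delta+1)^{-1}\,\mathcal{T}_{s}^{-1}.
\]
Since $\|\mathcal{T}_{s}g\|_{L^{q}(\mathbb{R}^{d})}=s^{-d/(2q)}\|g\|_{L^{q}(\mathbb{R}^{d})}$, and the identical dilation identity holds on $L_{\rm{weak}}^{q}(\mathbb{R}^{d})$ because the distribution function scales as $|\{|\mathcal{T}_{s}g|>t\}|=s^{-d/2}|\{|g|>t\}|$, combining these formulas delivers the claimed prefactor $s^{(d/2)(1/q_{1}-1/q_{2})-1}$ in front of the operator norm of $(-\Delta+1)^{-1}$ between the corresponding spaces at the fixed scale $s=1$.

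Second, at $s=1$ one has $(-\Delta+1)^{-1}f=G_{1}*f$, where $G_{1}$ is the Bessel potential of order two. The standard asymptotics $G_{1}(x)\sim c_{d}|x|^{-(d-2)}$ as $|x|\to 0$ for $d\ge 3$, together with the exponential decay $G_{1}(x)\lesssim e^{-|x|/2}$ for $|x|\ge 1$, show that $G_{1}\in L^{r}(\mathbb{R}^{d})$ for every $1\le r<d/(d-2)$, and that $G_{1}\in L_{\rm{weak}}^{d/(d-2)}(\mathbb{R}^{d})$ at the endpoint.

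For part (1), I would choose $r$ by the Young relation $1/r=1+1/q_{2}-1/q_{1}$. The ordering $q_{1}\le q_{2}$ forces $r\ge 1$, while the hypothesis $d(1/q_{1}-1/q_{2})<2$ is exactly $1/r>(d-2)/d$, so $r<d/(d-2)$ and $G_{1}\in L^{r}$; Young's convolution inequality then yields the desired bound. For part (2), the input lies in $L_{\rm{weak}}^{q_{1}}$, so I would replace Young by the O'Neil--Young (weak-type) convolution inequality with the same $r$; its hypotheses $1<q_{1},q_{2},r<\infty$ are exactly the strict integrability conditions imposed in the statement of (2).

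The proof amounts to a reduction to well-known convolution inequalities combined with an audit of exponents; I do not expect a substantive obstacle. The only care required is verifying that the single hypothesis $d(1/q_{1}-1/q_{2})<2$ simultaneously captures both $r\ge 1$ (from the ordering) and $r<d/(d-2)$ (from the endpoint integrability of $G_{1}$), and that the weak-$L^{q}$ norm scales under dilations exactly as the strong $L^{q}$ norm does, which is what permits the scaling step to be applied to part (2).
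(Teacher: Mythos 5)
The paper states this lemma without proof (it is a standard resolvent estimate), so there is no paper argument to compare against. Your proposed proof is correct and is the standard one: the scaling identity $(-\Delta+s)^{-1}=s^{-1}\mathcal{T}_{s}(-\Delta+1)^{-1}\mathcal{T}_{s}^{-1}$ with $\mathcal{T}_{s}f(x)=f(s^{1/2}x)$, together with the dilation law $\|\mathcal{T}_{s}g\|_{L^{q}}=s^{-d/(2q)}\|g\|_{L^{q}}$ (valid on $L^{q}_{\rm weak}$ by the same distribution-function computation), correctly produces the prefactor $s^{(d/2)(1/q_{1}-1/q_{2})-1}$, and the exponent audit is right: the Young index $1/r=1+1/q_{2}-1/q_{1}$ satisfies $r\ge 1$ because $q_{1}\le q_{2}$ and $r<d/(d-2)$ exactly because $d(1/q_{1}-1/q_{2})<2$, so the Bessel kernel lies in $L^{r}$, and Young (resp.\ the O'Neil/weak-Young inequality, which applies since all of $q_{1},q_{2},r$ lie in the open range $(1,\infty)$ and $r\le q_{2}$) gives the $s=1$ bound. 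One small omission is that you describe the kernel asymptotics only for $d\ge 3$, whereas the lemma is stated for $d\ge 1$; for $d=1$ the kernel is bounded and for $d=2$ it has only a logarithmic singularity, so the corresponding integrability thresholds are even more generous and the same argument goes through, but this should be said to cover the statement as written (even though in this paper the lemma is invoked only for $d=3,4$).
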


\begin{lemma}\label{18/11/23/17:17}
Assume $d \ge 3$. Let $\frac{d}{d-2} < q < \infty$. 
Then, the following holds for all $s_{0} \ge 0$:   
\begin{equation}\label{18/11/23/17:19}
\|(-\Delta+s_{0})^{-1} \|_{L^{\frac{dq}{d+2q}}(\mathbb{R}^{d}) \to L^{q}(\mathbb{R}^{d})}
\lesssim 
1
,
\end{equation}
where the implicit constant depends only on $d$ and $q$. 
\end{lemma}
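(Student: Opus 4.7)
The plan is to reduce the estimate to the case $s_0 = 0$ by a pointwise comparison of convolution kernels, and then invoke the Hardy--Littlewood--Sobolev inequality for the Riesz potential of order $2$.

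First, I would represent the resolvent via the heat semigroup: for every $s_0 \ge 0$ and $f$ in a suitable test class,
\begin{equation*}
(-\Delta + s_0)^{-1} f(x)
\;=\;
\int_{\Rd} K_{s_0}(x-y)\, f(y)\, dy,
\qquad
K_{s_0}(x) \;:=\; \int_0^{\infty} e^{-t s_0}\, \frac{e^{-|x|^{2}/(4t)}}{(4\pi t)^{d/2}} \, dt.
\end{equation*}
Since $e^{-t s_0} \le 1$ for $s_0 \ge 0$, the kernel satisfies the pointwise bound $0 \le K_{s_0}(x) \le K_0(x)$. Because $d \ge 3$, the convergent Gaussian integral yields $K_0(x) = c_d\, |x|^{-(d-2)}$, the Newton kernel. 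Consequently, for arbitrary $f$,
\begin{equation*}
\bigl| (-\Delta + s_0)^{-1} f(x) \bigr|
\;\le\;
c_d \bigl( |\cdot|^{-(d-2)} \ast |f| \bigr)(x).
\end{equation*}

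Next, I would apply the Hardy--Littlewood--Sobolev inequality for the kernel $|x|^{-(d-2)}$: whenever $1 < p < q < \infty$ and $\tfrac{1}{p} - \tfrac{1}{q} = \tfrac{2}{d}$, convolution with $|x|^{-(d-2)}$ is bounded from $L^{p}(\Rd)$ to $L^{q}(\Rd)$ with a constant depending only on $d$ and $q$. Choosing $p = \tfrac{dq}{d+2q}$, the scaling relation is automatic, and the remaining restriction $p > 1$ reduces to $q(d-2) > d$, i.e., $q > \tfrac{d}{d-2}$, which is precisely the hypothesis on $q$. Combining this with the pointwise majorisation above yields the desired estimate, uniformly in $s_0 \ge 0$.

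I do not foresee a serious obstacle: the pointwise comparison $K_{s_0} \le K_0$ is immediate from the heat-kernel formula, and the HLS admissibility range matches the assumption $q > d/(d-2)$ exactly. The only point deserving a brief remark is that the resulting constant is independent of $s_0$, as both the kernel comparison and the HLS constant are.
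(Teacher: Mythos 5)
Your proof is correct. The paper states this lemma without proof, treating it as a standard potential-theoretic estimate, and the argument you give — monotonicity of the resolvent kernel in $s_{0}$ via the heat-semigroup representation, pointwise domination by the Newton kernel $c_{d}|x|^{-(d-2)}$, and then the Hardy--Littlewood--Sobolev inequality with $\tfrac{1}{p}-\tfrac{1}{q}=\tfrac{2}{d}$ (which forces exactly $q>\tfrac{d}{d-2}$ so that $p>1$) — is precisely the standard route the authors evidently had in mind.
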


\begin{lemma}[Lemma 3.9 of \cite{AM}]\label{21/1/10/15:37}
Assume $d=3,4$. Then, the following holds for all $s>0$: 
\begin{equation}\label{21/1/10/7:15}
\| (-\Delta +s)^{-1} \Lambda W \|_{L^{\infty}}
\lesssim 
1+\delta(s)^{-1}
.
\end{equation}
\end{lemma}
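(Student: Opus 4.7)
The plan is to work on the Fourier side and reduce the $L^{\infty}$ bound to controlling the $L^{1}$-mass of $\mathcal{F}[\Lambda W](\xi)/(|\xi|^{2}+s)$. By Fourier inversion,
\begin{equation*}
\|(-\Delta+s)^{-1}\Lambda W\|_{L^{\infty}}
\le \frac{1}{(2\pi)^{d}}\int_{\Rd}\frac{|\mathcal{F}[\Lambda W](\xi)|}{|\xi|^{2}+s}\,d\xi,
\end{equation*}
so the whole argument reduces to estimating this integral. The crucial observation is that the identity \eqref{20/9/13/9:52}, namely $-\Delta\Lambda W=\frac{d+2}{d-2}W^{4/(d-2)}\Lambda W$, yields after a Fourier transform $|\xi|^{2}\mathcal{F}[\Lambda W](\xi)=\frac{d+2}{d-2}\mathcal{F}[W^{4/(d-2)}\Lambda W](\xi)$. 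Since $W^{4/(d-2)}\sim |x|^{-4}$ and $\Lambda W\sim |x|^{-(d-2)}$ at infinity, the product $W^{4/(d-2)}\Lambda W$ decays like $|x|^{-(d+2)}$ and is bounded near the origin, hence lies in $L^{1}(\Rd)$. This gives $|\mathcal{F}[\Lambda W](\xi)|\lesssim |\xi|^{-2}$ for all $\xi\ne 0$. Applying the same procedure to $\Delta^{2}\Lambda W=\frac{d+2}{d-2}\Delta(W^{4/(d-2)}\Lambda W)$ and expanding by the Leibniz rule, each resulting summand decays at least like $|x|^{-(d+4)}$ and thus lies in $L^{1}(\Rd)$; this upgrades the bound to $|\mathcal{F}[\Lambda W](\xi)|\lesssim |\xi|^{-4}$ for $|\xi|\ge 1$.

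Splitting the integral at $|\xi|=1$, the low-frequency part is estimated by combining $|\mathcal{F}[\Lambda W]|\lesssim |\xi|^{-2}$ with the already-established integral estimate \eqref{20/12/12/12:3}:
\begin{equation*}
\int_{|\xi|\le 1}\frac{|\mathcal{F}[\Lambda W](\xi)|}{|\xi|^{2}+s}\,d\xi
\lesssim \int_{|\xi|\le 1}\frac{d\xi}{(|\xi|^{2}+s)\,|\xi|^{2}}
\lesssim 1+\delta(s)^{-1}\qquad(0<s\le 1).
\end{equation*}
For the high-frequency part the bound $|\mathcal{F}[\Lambda W]|\lesssim |\xi|^{-4}$ together with $|\xi|^{2}+s\ge |\xi|^{2}$ yields, in polar coordinates, an integral of order $O(1)$ in both $d=3$ and $d=4$. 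For $s\ge 1$, using $|\xi|^{2}+s\ge 1+|\xi|^{2}$ makes the whole integral $O(1)$. Summing up the regimes gives $\|(-\Delta+s)^{-1}\Lambda W\|_{L^{\infty}}\lesssim 1+\delta(s)^{-1}$, as claimed.

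The main obstacle---and what singles out the low dimensions $d=3,4$---is the low-frequency singularity $|\mathcal{F}[\Lambda W]|\sim |\xi|^{-2}$, which reflects the degeneracy of $\Lambda W$ and the fact that $\Lambda W\notin L^{2}(\Rd)$ in these dimensions. The dimension-dependent function $\delta$ in \eqref{19/01/27/16:58} is tailored so that \eqref{20/12/12/12:3} precisely measures how the resolvent amplifies this singularity; once that input is granted, the remaining Fourier estimates are routine.
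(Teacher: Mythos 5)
Your proof is correct. The paper itself gives no argument here---it simply imports this estimate as Lemma 3.9 of \cite{AM}---so a direct comparison against an in-paper proof is not possible, but your Fourier-side argument has exactly the flavour this paper uses for the closely related Lemma \ref{22/2/20/16:32} in $d=4$: there too one writes $\{(-\Delta+s)^{-1}W\}(0)$ on the Fourier side, divides by $|\xi|^{2}$ via the equation $-\Delta W = W^{\frac{d+2}{d-2}}$, splits at $|\xi|=1$, and feeds the low-frequency singularity into \eqref{20/12/12/12:3}. Your high-frequency input $|\mathcal F[\Lambda W]|\lesssim |\xi|^{-4}$, obtained from $\Delta^{2}\Lambda W\in L^{1}(\Rd)$, is a clean alternative to the Cauchy--Schwarz/Plancherel step used in that proof; both just need the tail integral to be $O(1)$, which it is in $d=3,4$. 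One point left implicit that deserves a remark: passing from the distributional identity $|\xi|^{2}\mathcal F[\Lambda W]=\mathcal F[-\Delta\Lambda W]$ to the pointwise bound $|\mathcal F[\Lambda W](\xi)|\lesssim |\xi|^{-2}$ requires ruling out a singular part of $\mathcal F[\Lambda W]$ supported at $\{0\}$. This is fine---such a term would force $\Lambda W$ to contain a nonzero polynomial, contradicting its decay $\Lambda W(x)\sim|x|^{-(d-2)}$---and since $|\xi|^{-2}$ is locally integrable in $d=3,4$, $\mathcal F[\Lambda W]$ is a genuine $L^{1}_{\rm loc}$ function. With that noted, the estimate $\lesssim 1+\delta(s)^{-1}$ follows exactly as you computed.
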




\begin{lemma}[Lemma 3.10 of \cite{AM}]\label{18/12/02/11:25}
Assume $d=3,4$. Then, there exists a constant $\mathscr{C}>0$ depending only on $d$ such that the following holds  for all $\frac{d}{d-2}<q <\infty$,   $f \in L_{\rm rad}^{q}(\mathbb{R}^{d})$ and $s>0$: 
\begin{equation}\label{18/12/02/11:26}
\big|
\langle (-\Delta+s)^{-1} Vf , \Lambda W \rangle
+ 
\mathscr{C}
\Re\mathcal{F}[Vf](0) \delta(s)^{-1} 
\big|
\lesssim 
 \|  f  \|_{L^{q}}
,
\end{equation}
where the implicit constant depends only on $d$ and $q$.
\end{lemma}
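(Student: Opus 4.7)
The plan is to move to the Fourier side and exploit the degeneracy relation $(-\Delta+V)\Lambda W=0$ to expose a $|\xi|^{-4}$-type singularity at the origin, whose renormalization against the resolvent denominator $(|\xi|^{2}+s)^{-1}$ is exactly what \eqref{20/12/12/12:3} computes. First, by self-adjointness,
\[
\langle (-\Delta+s)^{-1}Vf,\Lambda W\rangle = \langle Vf,(-\Delta+s)^{-1}\Lambda W\rangle,
\]
which is legitimate because $Vf\in L^{1}(\mathbb{R}^{d})$ (H\"older, using $V\lesssim \langle x\rangle^{-4}$ so that $V\in L^{q'}$ for every $q\in(d/(d-2),\infty)$ when $d=3,4$) and $(-\Delta+s)^{-1}\Lambda W\in L^{\infty}$ by Lemma \ref{21/1/10/15:37}. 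The distributional identity $|\xi|^{2}\mathcal{F}[\Lambda W](\xi)=-\mathcal{F}[V\Lambda W](\xi)$, equivalent to $(-\Delta+V)\Lambda W=0$, together with Parseval then yields
\[
\langle (-\Delta+s)^{-1}Vf,\Lambda W\rangle
= -\frac{1}{(2\pi)^{d}}\Re\!\int_{\mathbb{R}^{d}}\frac{\mathcal{F}[Vf](\xi)\,\overline{\mathcal{F}[V\Lambda W](\xi)}}{(|\xi|^{2}+s)\,|\xi|^{2}}\,d\xi,
\]
in which both factors in the numerator are bounded continuous functions of $\xi$ (since $Vf,V\Lambda W\in L^{1}$) and $\mathcal{F}[V\Lambda W](0)=\int V\Lambda W\,dx$ is real.

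Next I would split the integral at $|\xi|=1$. On $|\xi|>1$ the kernel is at worst $O(|\xi|^{-4})$ and integrable, so the piece is bounded by $\|\mathcal{F}[Vf]\|_{L^{\infty}}\le\|Vf\|_{L^{1}}\lesssim\|f\|_{L^{q}}$. On $|\xi|\le 1$ I would freeze the slowly varying factors at $\xi=0$,
\[
\mathcal{F}[Vf](\xi)\,\overline{\mathcal{F}[V\Lambda W](\xi)} = \mathcal{F}[Vf](0)\,\mathcal{F}[V\Lambda W](0) + R(\xi),
\]
and apply \eqref{20/12/12/12:3} to the frozen piece, producing the main contribution
\[
-\frac{\mathfrak{C}\,\mathcal{F}[V\Lambda W](0)}{(2\pi)^{d}}\,\Re\mathcal{F}[Vf](0)\,\delta(s)^{-1}
\quad\text{plus}\quad O(\|f\|_{L^{q}}).
\]
This identifies $\mathscr{C}:=\mathfrak{C}\,\mathcal{F}[V\Lambda W](0)/(2\pi)^{d}$; the positivity $\mathscr{C}>0$ follows from \eqref{20/12/22/14:18} applied with $r=4/(d-2)$, which, after inserting $V=-\tfrac{d+2}{d-2}W^{4/(d-2)}$, gives $\int V\Lambda W\,dx>0$.

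The remaining task is to bound the contribution of $R(\xi)/[(|\xi|^{2}+s)|\xi|^{2}]$ over $|\xi|\le 1$ by $\|f\|_{L^{q}}$ uniformly in $s\in(0,1]$. Telescoping $R$ into two factors and using the H\"older bound on the Fourier transform,
\[
|\mathcal{F}[Vf](\xi)-\mathcal{F}[Vf](0)|\le\int|e^{-i\xi\cdot x}-1|\,|Vf|\,dx \lesssim |\xi|^{\alpha}\,\big\|\,|x|^{\alpha}V\big\|_{L^{q'}}\|f\|_{L^{q}},
\]
together with the analogous (and $f$-independent) bound for $V\Lambda W$, the problem reduces to verifying
\[
\int_{|\xi|\le 1}\frac{|\xi|^{\alpha}}{(|\xi|^{2}+s)\,|\xi|^{2}}\,d\xi \lesssim 1 \quad\text{uniformly in } s\in(0,1].
\]
A polar-coordinate computation shows this holds iff $\alpha>1$ when $d=3$ and $\alpha>0$ when $d=4$.

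The delicate point, and the main obstacle, is that the weight constraint $|x|^{\alpha}V\in L^{q'}$ forces $\alpha<1+3/q$ ($d=3$) or $\alpha<4/q$ ($d=4$), while the uniform $s$-integrability just noted forces $\alpha>1$ or $\alpha>0$ respectively. These windows intersect for every admissible $q<\infty$ but shrink as $q\to\infty$, so $\alpha$ must be chosen depending on $q$; the implicit constant in \eqref{18/12/02/11:26} then absorbs this $q$-dependence, consistent with the statement of the lemma. Once $\alpha$ is fixed, all remaining estimates are routine H\"older and polar-coordinate bookkeeping.
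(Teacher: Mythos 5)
Since this lemma is quoted verbatim as Lemma~3.10 of \cite{AM}, the present paper does not contain a proof to compare against; the closest internal analogues are Lemma~\ref{22/1/25/7:1} (proved via the physical-space splitting $\Lambda W = c|x|^{-(d-2)}+Z$ and the identity $(-\Delta)|x|^{-(d-2)}=c'\delta_{0}$) and the $d=4$ half of Lemma~\ref{22/2/20/16:32} (a Fourier argument very much in the spirit of yours, freezing $\mathcal{F}[W^{\frac{d+2}{d-2}}]$ at $\xi=0$ and applying~\eqref{20/12/12/12:3}). Your overall architecture --- self-adjointness, the identity $|\xi|^{2}\mathcal{F}[\Lambda W]=-\mathcal{F}[V\Lambda W]$, splitting at $|\xi|=1$, freezing the slow factors --- is sound and essentially the Fourier-side route the paper itself takes for $d=4$ in Lemma~\ref{22/2/20/16:32}.

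There is, however, a genuine gap in your remainder estimate for $d=3$. You write
\begin{equation*}
|\mathcal{F}[Vf](\xi)-\mathcal{F}[Vf](0)|\le\int|e^{-i\xi\cdot x}-1|\,|Vf|\,dx\lesssim |\xi|^{\alpha}\,\big\||x|^{\alpha}V\big\|_{L^{q'}}\|f\|_{L^{q}},
\end{equation*}
and you observe that uniform integrability of $|\xi|^{\alpha-4}$ over $|\xi|\le 1$ in three dimensions forces $\alpha>1$. But the pointwise bound $|e^{-i\xi\cdot x}-1|\lesssim(|\xi||x|)^{\alpha}$ on which the displayed inequality rests is only true for $0<\alpha\le 1$; for $\alpha>1$ it fails (the quantity $\min\{2,|\xi||x|\}/(|\xi||x|)^{\alpha}$ blows up as $|\xi||x|\to 0$). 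So in $d=3$ the two constraints you identify, $\alpha>1$ from the resolvent kernel and $\alpha\le 1$ from the elementary oscillatory bound, are in fact incompatible, and the ``routine H\"older bookkeeping'' at the end hides an actual obstruction. The salvage is exactly the radial hypothesis you state but do not use: since $Vf$ is radial, $\int x\,Vf\,dx=0$ (this moment exists because $|x|V\in L^{q'}$ for $q>3$), so one may Taylor-expand to second order, write $e^{-i\xi\cdot x}-1+i\xi\cdot x$ against $Vf$, and use $|e^{i\theta}-1-i\theta|\lesssim\min\{|\theta|,|\theta|^{2}\}\lesssim|\theta|^{\alpha}$ for $\alpha\in[1,2]$; this places $\alpha$ in the genuinely nonempty interval $(1,1+3/q)$. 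The same radial cancellation must also be used for the $\mathcal{F}[V\Lambda W]$ factor in your telescoping of $R(\xi)$. Without invoking this, the $d=3$ case does not close. (Two minor omissions: the regime $s\ge 1$ is not addressed, though it is trivial via Lemma~\ref{21/1/10/15:37}; and since $\Lambda W\notin L^{2}(\mathbb{R}^{d})$ for $d=3,4$, the ``self-adjointness'' step should be justified by a direct integrability argument rather than $L^{2}$-self-adjointness of the resolvent.)
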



Next, we give a variant of Lemma \ref{18/12/02/11:25}: 
\begin{lemma}\label{22/1/25/7:1}
Assume $d=3,4$. 
 Then, there exists $\mathscr{C}_{0}>0$ depending only on $d$ such that 
 the following holds for all $s>0$ and $f \in L^{2^{*}}(\mathbb{R}^{d})$:  
\begin{equation}\label{22/1/25/7:2}
\begin{split}
&
\big|
\langle 
(-\Delta +s)^{-1}  f, V\Lambda W 
\rangle 
-
\mathscr{C}_{0} \{(-\Delta +s)^{-1}f\}(0)
\big|
\lesssim 
\| f\|_{L^{2^{*}}}
,
\end{split} 
\end{equation}
where the implicit constant depends only on $d$. 
\end{lemma}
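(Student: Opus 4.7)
The plan is to set $\mathscr{C}_{0} := \int_{\mathbb{R}^{d}} V\Lambda W(x)\,dx$ and, by using the symmetry of the Green's function $G_{s}$ of $-\Delta+s$ (equivalently, Fubini applied to the convolution representation of $(-\Delta+s)^{-1}$), to rewrite the left-hand side as $\langle f, R_{s}\rangle$, where
\[
R_{s}(y) := \{(-\Delta+s)^{-1}(V\Lambda W)\}(y) - \mathscr{C}_{0}\, G_{s}(y) = \int_{\mathbb{R}^{d}} [G_{s}(y-x) - G_{s}(y)]\, V\Lambda W(x)\,dx.
\]
By H\"older's inequality it then suffices to establish the uniform-in-$s$ bound $\|R_{s}\|_{L^{(2^{*})'}(\mathbb{R}^{d})} \lesssim 1$, with $(2^{*})' = \frac{2d}{d+2}$.

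To estimate $R_{s}$ I would exploit three ingredients: $V\Lambda W$ is smooth and decays like $|x|^{-(d+2)}$ at infinity (since $V \sim |x|^{-4}$ and $\Lambda W \sim |x|^{-(d-2)}$); by radial symmetry $\int x\, V\Lambda W(x)\,dx = 0$; and pointwise/$L^{q}$ bounds on $G_{s}$ and $\nabla G_{s}$ are available through Lemma \ref{18/11/05/10:29} and Lemma \ref{18/11/23/17:17}. I would split the defining integral for $R_{s}(y)$ into the regions $|x|\le|y|/2$ and $|x|>|y|/2$. On the first, a first-order Taylor expansion of $G_{s}(y-x)-G_{s}(y)$ combined with the vanishing first moment of $V\Lambda W$ removes the linear piece, so only a quadratic remainder survives, pointwise controlled by $|x|^{2}\,|D^{2}G_{s}|$ near $y$. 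On the second, I would bound $|G_{s}(y-x)-G_{s}(y)| \le G_{s}(y-x) + G_{s}(y)$ directly and use the rapid decay of $V\Lambda W$ on $\{|x|>|y|/2\}$ to localize in $|y|$.

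The main obstacle is the borderline integrability in $d=4$: the second moment $\int |x|^{2}|V\Lambda W(x)|\,dx$ diverges logarithmically, and the $L^{(2^{*})'}$-integrability of $\nabla G_{s}$ is also borderline. To circumvent this I plan to carry out a dyadic decomposition in $|y|$ and appeal to the weak-type estimate \eqref{18/11/24/13:56} from Lemma \ref{18/11/05/10:29}(ii) together with Lemma \ref{21/1/10/15:37}, showing that after summing the dyadic pieces the logarithmic losses assemble into a finite $L^{(2^{*})'}$-norm uniform in $s$. Alternatively, the identity $(-\Delta+s)^{-1}(V\Lambda W) = s(-\Delta+s)^{-1}\Lambda W - \Lambda W$ (which follows from the degeneracy $(-\Delta+V)\Lambda W=0$) can be used to recast $R_{s}$ so that the borderline contributions cancel explicitly against $s(-\Delta+s)^{-1}\Lambda W$, itself controlled by Lemma \ref{21/1/10/15:37}. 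I expect this borderline $d=4$ analysis to be the most delicate step of the proof.
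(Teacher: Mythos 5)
Your reformulation is correct: writing the left-hand side as $\langle f, R_{s}\rangle$ with $R_{s}:=(-\Delta+s)^{-1}(V\Lambda W)-\mathscr{C}_{0}G_{s}$, noting that
$\mathscr{C}_{0}=\int V\Lambda W$ is indeed the right constant (as one can check via the divergence theorem, since $V\Lambda W=\Delta\Lambda W$ and $\Lambda W\sim -\frac{\{d(d-2)\}^{d/2}}{2d}|x|^{-(d-2)}$ at infinity), and aiming for $\|R_{s}\|_{L^{(2^{*})'}}\lesssim 1$ uniformly in $s$, which is true and equivalent to the lemma by duality. Where you diverge from the paper is in how you attack $R_{s}$: a Calder\'on--Zygmund decomposition of the kernel $G_{s}(y-x)-G_{s}(y)$ with a vanishing-first-moment cancellation, plus dyadic accounting for $d=4$. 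The paper instead decomposes $\Lambda W$ itself, not $V\Lambda W$: writing $\Lambda W=-\frac{\{d(d-2)\}^{d/2}}{2d}|x|^{-(d-2)}+Z$ and using $V\Lambda W=\Delta\Lambda W$, the singular piece produces exactly $\mathscr{C}_{0}\delta_{0}$ under $\Delta$, so that $R_{s}=(-\Delta+s)^{-1}\Delta Z=-Z+s(-\Delta+s)^{-1}Z$, which is bounded in $L^{2d/(d+2)}$ in one stroke from $\|Z\|_{L^{2d/(d+2)}}\lesssim 1$ and the elementary resolvent bound $s\|(-\Delta+s)^{-1}\|_{L^{2^{*}}\to L^{2^{*}}}\lesssim 1$. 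That decomposition entirely sidesteps the $d=4$ logarithm you worry about: the borderline never appears because the singularity is pulled out of $\Lambda W$, not estimated through a second-moment bound on $V\Lambda W$.

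There is a concrete loose end in your $d=4$ ``alternative'': you suggest using $(-\Delta+s)^{-1}(V\Lambda W)=s(-\Delta+s)^{-1}\Lambda W-\Lambda W$ and appealing to Lemma \ref{21/1/10/15:37} for $s(-\Delta+s)^{-1}\Lambda W$, so that the borderline cancels ``explicitly against'' that term. But Lemma \ref{21/1/10/15:37} only supplies an $L^{\infty}$ bound, and what you actually need is an $L^{2d/(d+2)}$ bound on the \emph{combination} $s(-\Delta+s)^{-1}\Lambda W-\Lambda W-\mathscr{C}_{0}G_{s}$; none of the three summands is individually in $L^{2d/(d+2)}$ uniformly in $s$ (indeed $\Lambda W\notin L^{2d/(d+2)}$ at all for $d=3,4$), and the cancellation that makes the sum small is between the $|x|^{-(d-2)}$ tail of $\Lambda W$ and the term $\mathscr{C}_{0}G_{s}$ via the identity $\mathscr{C}_{0}G_{s}=|x|^{-(d-2)}\cdot c-s(-\Delta+s)^{-1}(c|x|^{-(d-2)})$, not against $s(-\Delta+s)^{-1}\Lambda W$ as you state. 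That is precisely what the paper's splitting $\Lambda W=-c|x|^{-(d-2)}+Z$ makes transparent. Your primary route (vanishing moment plus dyadic summation) looks sound in principle and would close, but it is noticeably heavier, and the alternative as written does not do what you claim without this extra decomposition step.
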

\begin{proof}[Proof of Lemma \ref{22/1/25/7:1}]
Let us begin by decomposing $\Lambda W$ as follows:  
\begin{equation}\label{18/12/03/10:17}
\Lambda W 
= 
-\frac{\{d(d-2)\}^{\frac{d}{2}}}{2d}
|x|^{-(d-2)}
+
Z,
\end{equation}
where 
\begin{equation}\label{18/12/04/09:43}
Z:=
\frac{d-2}{2}
\Big(1+\frac{|x|^{2}}{d(d-2)} \Big)^{-\frac{d}{2}}
-
\frac{|x|^{2}}{2d}
\Big\{
\Big(1+\frac{|x|^{2}}{d(d-2)} \Big)^{-\frac{d}{2}}
-
\Big(\frac{|x|^{2}}{d(d-2)} \Big)^{-\frac{d}{2}}
\Big\}
.
\end{equation} 
Let $\chi_{\le 1}$ be the indicator function of the set $\{|x|\le 1\}$, namely $\chi_{\le 1}(x)=0$ if $|x|\le 1$ and $\chi_{\le 1}(x)=0$ if $|x|>1$. 
Then, by the fundamental theorem of calculus, and $(d-\frac{3}{2})\frac{2d}{d+2}<d$ for $d=3,4$, we see that 
\begin{equation}\label{22/2/22/10:31}
\begin{split}
&\| Z \|_{L^{\frac{2d}{d+2}}}
\lesssim 
\|
(1+|x|)^{-d}
\|_{L^{\frac{2d}{d+2}}}
+
\| \int_{0}^{1} 
(\theta^{\frac{1}{2}}+|x| )^{-d} \,d \theta
\|_{L^{\frac{2d}{d+2}}}
\\[6pt]
&\lesssim  
1 + 
\| \chi_{\le 1} 
\int_{0}^{1} 
\theta^{-\frac{3}{4}} |x|^{-d+\frac{3}{2}} \,d \theta
\|_{L^{\frac{2d}{d+2}}}
+
\|(1-\chi_{\le 1}) \int_{0}^{1} 
|x|^{-d} \,d \theta
\|_{L^{\frac{2d}{d+2}}}
\lesssim 1.
\end{split} 
\end{equation}

Now, let $s>0$ and $f \in L^{2^{*}}(\mathbb{R}^{d})$. 
 By $V\Lambda W=  \Delta \Lambda W$ (see \eqref{20/9/13/9:52}), 
 the decomposition \eqref{18/12/03/10:17}, and 
 $(-\Delta) |x|^{-(d-2)}$ being the Dirac's delta measure at $0$ with intensity  $(d-2) |S^{d-1}|$ (see, e.g., Theorem 6.20 of \cite{Lieb-Loss}), we see that  
\begin{equation}\label{22/1/30/11:39}
\begin{split}
&
\langle 
(-\Delta +s)^{-1}  f, V\Lambda W 
\rangle 
=
\langle 
(-\Delta +s)^{-1}  f, \Delta \Lambda W 
\rangle 
\\[6pt]
&= 
\frac{\{d(d-2)\}^{\frac{d}{2}}}{2d}
\langle 
(-\Delta +s)^{-1} f, (-\Delta) |x|^{-(d-2)}
\rangle 
+
\langle 
(-\Delta +s)^{-1} f, \Delta Z 
\rangle
\\[6pt]
&=
\mathscr{C}_{0} 
\{ (-\Delta +s)^{-1} f \}(0)
+
\langle 
(-\Delta +s)^{-1} f, \Delta Z 
\rangle
,
\end{split} 
\end{equation}
where $\mathscr{C}_{0}>0$ is some constant depending only on $d$. 
 Consider the second term on the right-hand side of \eqref{22/1/30/11:39}.
 By H\"older's inequality, Lemma \ref{18/11/05/10:29} and \eqref{22/2/22/10:31}, we see that 
\begin{equation}\label{22/2/21/12:00}
\begin{split} 
&
\big| \langle 
(-\Delta +s)^{-1} f, \Delta Z 
\rangle 
\big| 
=
\big|
\langle 
(-\Delta +s)^{-1} f, \{s-(-\Delta+s) \} Z
\rangle 
\big|
\\[6pt]
&\le 
s \|(-\Delta +s)^{-1} f \|_{L^{2^{*}}}
\| Z \|_{L^{\frac{2d}{d+2}}}
+
\| f \|_{L^{2^{*}}}
\| Z \|_{L^{\frac{2d}{d+2}}}
\lesssim
\| f \|_{L^{2^{*}}}
.
\end{split} 
\end{equation}
Putting \eqref{22/1/30/11:39} and \eqref{22/2/21/12:00} together, 
we find that the claim \eqref{22/1/25/7:2} is true. 
\end{proof}


We will apply Lemma \ref{22/1/25/7:1} to $f=W$. 
 In that case, the principal factor is the following: 
\begin{equation}\label{22/2/21/10:51}
A_{0}:=
\left\{ \begin{array}{cc}
\sqrt{3}
\displaystyle{\int_{\mathbb{R}^{3}}
\dfrac{e^{-|x|}}{4\pi |x|^{2}}
\,dx }
& \mbox{if $d=3$},
\\[12pt]
\dfrac{\mathfrak{C}}{(2\pi)^{d}}\mathcal{F}[W^{\frac{d+2}{d-2}}](0)
=
\dfrac{\mathfrak{C}}{(2\pi)^{d}}
\displaystyle{\int_{\mathbb{R}^{d}} W^{\frac{d+2}{d-2}}(x) \,dx }
& \mbox{if $d=4$},
\end{array}\right. 
\end{equation}
where $\mathfrak{C}$ is the same constant as in \eqref{20/12/12/12:3}; Precisely, we have the following: 
\begin{lemma}\label{22/2/20/16:32}
Assume $d=3,4$. Then, the following holds for all $0< s<1$:  
\begin{equation}\label{22/2/20/16:33}
\delta(s) \{ (-\Delta +s)^{-1} W \}(0) 
= A_{0} +o_{s}(1)
,
\end{equation}
where $A_{0}$ is the same constant as in \eqref{22/2/21/10:51}. 
\end{lemma}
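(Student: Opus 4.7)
The plan is to handle the dimensions $d=3$ and $d=4$ separately, matching the two cases in the definition of $A_{0}$ in \eqref{22/2/21/10:51}. In each case the goal is to extract the leading-order behavior of $\{(-\Delta+s)^{-1}W\}(0)$ as $s\to 0^{+}$.

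For $d=3$, I would use the explicit Green's function $G_{s}(x)=e^{-\sqrt{s}|x|}/(4\pi|x|)$ of $-\Delta+s$ on $\mathbb{R}^{3}$, so that $\{(-\Delta+s)^{-1}W\}(0)=\int_{\mathbb{R}^{3}}G_{s}(x)W(x)\,dx$. Changing variables $y=\sqrt{s}\,x$ and using $W(x)=(1+|x|^{2}/3)^{-1/2}$ to compute $W(y/\sqrt{s})=\sqrt{3s}/\sqrt{3s+|y|^{2}}$, one arrives at
\begin{equation*}
\sqrt{s}\,\{(-\Delta+s)^{-1}W\}(0)=\sqrt{3}\int_{\mathbb{R}^{3}}\frac{e^{-|y|}}{4\pi|y|\sqrt{3s+|y|^{2}}}\,dy.
\end{equation*}
The integrand is dominated by the integrable function $e^{-|y|}/(4\pi|y|^{2})$, so dominated convergence yields the limit $\sqrt{3}\int_{\mathbb{R}^{3}} e^{-|y|}/(4\pi|y|^{2})\,dy=A_{0}$. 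Since $\delta(s)=\sqrt{s}$, this closes the $d=3$ case.

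For $d=4$, I would work on the Fourier side. Using $-\Delta W = W^{3}$ (the Aubin--Talenti equation, with $(d+2)/(d-2)=3$) and the fact $W^{3}\in L^{1}(\mathbb{R}^{4})$ (since $W^{3}(x)\sim|x|^{-6}$ at infinity), the identity $(-\Delta)^{-1}W^{3}=W$ combined with Plancherel leads to
\begin{equation*}
\{(-\Delta+s)^{-1}W\}(0)=\frac{1}{(2\pi)^{4}}\int_{\mathbb{R}^{4}}\frac{\mathcal{F}[W^{3}](\xi)}{(|\xi|^{2}+s)|\xi|^{2}}\,d\xi.
\end{equation*}
Splitting at $|\xi|=1$, the outer integral is uniformly $O(1)$. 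In the inner piece, I would decompose $\mathcal{F}[W^{3}](\xi)=\mathcal{F}[W^{3}](0)+R(\xi)$ with $|R(\xi)|\lesssim|\xi|$, which uses $|x|W^{3}\in L^{1}(\mathbb{R}^{4})$. The constant part, combined with \eqref{20/12/12/12:3}, produces the singular contribution $\mathfrak{C}\mathcal{F}[W^{3}](0)(2\pi)^{-4}\delta(s)^{-1}+O(1)$, while the remainder contributes $O(1)$ because $\int_{|\xi|\le 1}|\xi|^{-1}(|\xi|^{2}+s)^{-1}\,d\xi$ stays bounded in four dimensions. Multiplying through by $\delta(s)$ and using $\delta(s)\to 0$ as $s\to 0^{+}$ yields $A_{0}+o_{s}(1)$.

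The main technical point is in the $d=4$ case: one must verify that $\mathcal{F}[W^{3}]$ is bounded and Lipschitz at the origin, so that replacing $\mathcal{F}[W^{3}](\xi)$ by $\mathcal{F}[W^{3}](0)$ costs only $O(1)$ inside the singular integral of \eqref{20/12/12/12:3}. The required decay estimates on $W^{3}$ and $|x|W^{3}$ are elementary consequences of the explicit form of $W$, and once they are in place the asymptotic follows immediately.
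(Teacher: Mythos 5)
Your proposal is correct. For $d=4$ you follow essentially the same route as the paper: pass to the Fourier side via $-\Delta W = W^{3}$, split at $|\xi|=1$, replace $\mathcal{F}[W^{3}](\xi)$ by $\mathcal{F}[W^{3}](0)$ inside the singular piece at a cost of $O(1)$ (using $W^{3},|x|W^{3}\in L^{1}(\mathbb{R}^{4})$), and invoke \eqref{20/12/12/12:3} for the constant contribution. The interesting divergence is in $d=3$. The paper decomposes $W = \{d(d-2)\}^{\frac{d-2}{2}}|x|^{-(d-2)} + Y$, evaluates the contribution of the homogeneous singular piece exactly, and then separately bounds the Green's-function integral against $Y$ by a quantitative $O(s^{-1/8})$, which requires the H\"older/scaling estimate \eqref{18/12/04/08:55}. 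You instead rescale $y=\sqrt{s}\,x$, observe that $W(y/\sqrt{s}) = \sqrt{3s}/\sqrt{3s+|y|^{2}}$, and apply dominated convergence to
\begin{equation*}
\sqrt{s}\,\{(-\Delta+s)^{-1}W\}(0)
=\sqrt{3}\int_{\mathbb{R}^{3}}\frac{e^{-|y|}}{4\pi|y|\sqrt{3s+|y|^{2}}}\,dy,
\end{equation*}
with the uniform dominating function $e^{-|y|}/(4\pi|y|^{2})\in L^{1}(\mathbb{R}^{3})$ obtained from $\sqrt{3s+|y|^{2}}\ge|y|$. This is cleaner and avoids introducing $Y$; the trade-off is that the paper's argument yields an explicit convergence rate, whereas yours gives only the qualitative $o_{s}(1)$ needed for the statement as written.
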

\begin{proof}[Proof of Lemma \ref{22/2/20/16:32}]
Divide $W$ as follows 
\begin{equation}\label{22/2/18/15:56}
W= 
\{d(d-2)\}^{\frac{d-2}{2}}|x|^{-(d-2)}  +Y 
,
\end{equation}
where 
\begin{equation}\label{22/2/20/16:48}
Y(x)
:=
\Big(1+ \frac{|x|^{2}}{d(d-2)} \Big)^{-\frac{d-2}{2}} 
-
\Big( \frac{|x|^{2}}{d(d-2)} \Big)^{-\frac{d-2}{2}}
. 
\end{equation} 
Note that 
\begin{equation}\label{22/2/21/10:59}
| Y(x)| 
\le \Big( \frac{|x|^{2}}{d(d-2)} \Big)^{-\frac{d-2}{2}} 
\lesssim |x|^{-(d-2)}
.
\end{equation}
Furthermore, by the fundamental theorem of calculus,  $\frac{4d}{2d+5}(d-1) <d$ and $\frac{4d}{2d+5}>1$ for $d=3,4$, we see that 
\begin{equation}\label{18/12/04/08:55}
\begin{split}
\|Y\|_{L^{\frac{4d}{2d+5}}}
&=
\Big\| \frac{d-2}{2}
\int_{0}^{1} 
\Big(\theta+\frac{|x|^{2}}{d(d-2)}\Big)^{-\frac{d}{2}} \,d \theta
\Big\|_{L^{\frac{4d}{2d+5}}}
\\[6pt]
&\lesssim  
\int_{0}^{1} \theta^{-\frac{1}{2}} \,d\theta
\big\| 
|x|^{-(d-1)} 
\big\|_{L^{\frac{4d}{2d+5}}(|x|\le 1)}
+
\big\| |x|^{-d}
\big\|_{L^{\frac{4d}{2d+5}}(1\le |x|)}
\lesssim 1.
\end{split} 
\end{equation}

Assume $d=3$. Then, by \eqref{22/2/18/15:56}, and the representation formula of the Green's function for $(-\Delta+s)^{-1}$ (see the remarks for Theorem 6.23 of \cite{Lieb-Loss}), we see that   
\begin{equation}\label{22/2/7/12:17}
\begin{split}
&\{(-\Delta +s )^{-1}W \}(0)
\\[6pt]
&=
\{d(d-2)\}^{\frac{d-2}{2}}
\int_{\mathbb{R}^{3}}
\frac{e^{-s_{n}^{\frac{1}{2}} |x|}}{4\pi |x|}
|x|^{-(d-2)}\,dx 
+
\int_{\mathbb{R}^{3}}
\frac{e^{-s^{\frac{1}{2}} |x|}}{4\pi |x|}
Y(x)\,dx 
.
\end{split} 
\end{equation}
Consider the first term on the right-hand side of \eqref{22/2/7/12:17}.
 By the substitution of variables and $d=3$,  we see that 
\begin{equation}\label{22/2/20/17:10}
\{d(d-2)\}^{\frac{d-2}{2}}
\int_{\mathbb{R}^{3}}
\frac{e^{-s_{n}^{\frac{1}{2}} |x|}}{4\pi |x|}
|x|^{-(d-2)}\,dx 
=
s^{-\frac{1}{2}} \sqrt{3}
\int_{\mathbb{R}^{3}}
\frac{e^{-|x|}}{4\pi |x|^{2}}
\,dx 
=
\delta(s)^{-1}A_{0}
.
\end{equation}
Consider the second term on the right-hand side of \eqref{22/2/7/12:17}.
 By \eqref{22/2/21/10:59}, $d=3$, H\"older's inequality and \eqref{18/12/04/08:55}, 
 we see that 
\begin{equation}\label{22/2/20/17:19}
\begin{split} 
&
\Big|
\int_{\mathbb{R}^{3}}
\frac{e^{-s^{\frac{1}{2}} |x|}}{4\pi |x|}
Y(x)\,dx 
\Big| 
\lesssim  
\int_{|x|\le 1}
|x|^{-2} \,dx 
+
\int_{1\le |x|}
e^{-s^{\frac{1}{2}}|x|}
|Y(x)|\,dx 
\\[6pt]
&\lesssim
1 +
s_{n}^{-\frac{d}{2}}
\int_{\mathbb{R}^{3}}
e^{-|x|}
\big| Y(s_{n}^{-\frac{1}{2}}x) \big|\,dx 
\\[6pt]
&\lesssim 
1
+ 
s^{-\frac{d}{2}}
\| e^{-|x|} \|_{L^{\frac{4d}{2d-5}}}
\|Y(s_{n}^{-\frac{1}{2}}\cdot) \|_{L^{\frac{4d}{2d+5}}}
\le
1 + s^{-\frac{2d-5}{8}}\lesssim s^{-\frac{1}{8}}.
\end{split} 
\end{equation}
Then, \eqref{22/2/20/16:33} follows from \eqref{22/2/7/12:17}, \eqref{22/2/20/17:10} and \eqref{22/2/20/17:19}.

Next, assume $d=4$. By the Fourier transform of the Green's function for $(-\Delta+s)^{-1}$ (see Theorem 6.23 of \cite{Lieb-Loss}) and $-\Delta W = W^{\frac{d+2}{d-2}}$, we see that 
\begin{equation}\label{22/2/20/17:57}
\begin{split}
&
\{(-\Delta +s )^{-1}W \}(0)
=
\mathcal{F}^{-1}[\mathcal{F}[(-\Delta+s)^{-1}W]](0)
=
\frac{1}{(2\pi)^{d}}
\int_{\mathbb{R}^{d}}
\frac{\mathcal{F}[W](\xi)}{|\xi|^{2}+s}
\,d\xi
\\[6pt]
&=
\frac{1}{(2\pi)^{d}}
\int_{\mathbb{R}^{d}}
\frac{\mathcal{F}[(-\Delta)W](\xi)}{(|\xi|^{2}+s)|\xi|^{2}}
\,d\xi
=
\frac{1}{(2\pi)^{d}}
\int_{\mathbb{R}^{d}}
\frac{\mathcal{F}[W^{\frac{d+2}{d-2}}](\xi)}{(|\xi|^{2}+s)|\xi|^{2}}
\,d\xi  
\\[6pt]
&=
\frac{1}{(2\pi)^{d}}
\int_{|\xi|\le 1}
\frac{\mathcal{F}[W^{\frac{d+2}{d-2}}](0)}{(|\xi|^{2}+s)|\xi|^{2}}
\,d\xi 
+
\frac{1}{(2\pi)^{d}}
\int_{|\xi|\le 1}
\frac{\mathcal{F}[W^{\frac{d+2}{d-2}}](\xi)-\mathcal{F}[W^{\frac{d+2}{d-2}}](0)}{(|\xi|^{2}+s)|\xi|^{2}}
\,d\xi  
\\[6pt]
&\quad +
\frac{1}{(2\pi)^{d}}\int_{1\le |\xi|}
\frac{\mathcal{F}[W^{\frac{d+2}{d-2}}](\xi)}{(|\xi|^{2}+s)|\xi|^{2}}
\,d\xi  
.
\end{split} 
\end{equation}
It follows from \eqref{20/12/12/12:3} and $W(x)\sim (1+|x|)^{-(d-2)}$ that 
\begin{equation}\label{22/2/20/18:10}
\Big|\int_{|\xi|\le 1}
\frac{\mathcal{F}[W^{\frac{d+2}{d-2}}](0)}{(|\xi|^{2}+s)|\xi|^{2}}
\,d\xi 
-
\mathfrak{C} \delta(s)^{-1} 
\mathcal{F}[W^{\frac{d+2}{d-2}}](0)
\Big| \lesssim 
\| W^{\frac{d+2}{d-2}}\|_{L^{1}}\lesssim 1
.
\end{equation}
Consider the second term on the right-hand side of \eqref{22/2/20/17:57}.
 By $W(x)\sim (1+|x|)^{-(d-2)}$, we see that 
\begin{equation}\label{22/2/21/9:15}
\begin{split} 
&
\Big|
\int_{|\xi|\le 1}
\frac{\mathcal{F}[W^{\frac{d+2}{d-2}}](\xi)-\mathcal{F}[W^{\frac{d+2}{d-2}}](0)}{(|\xi|^{2}+s)|\xi|^{2}}
\,d\xi 
\Big|
\\[6pt]
&\le    
\int_{|\xi|\le 1} 
\frac{1}{(|\xi|^{2}+s)|\xi|^{2}} 
\Big(
\int_{\mathbb{R}^{4}}
\big| e^{-ix\cdot \xi} - 1 \big|
W^{\frac{d+2}{d-2}}(x) 
\,dx  
\Big)
\,d\xi 
\\[6pt]
&\lesssim    
\int_{|\xi|\le 1} 
\frac{1}{(|\xi|^{2}+s)|\xi|^{2}} 
\Big(
\int_{\mathbb{R}^{4}}
\Big\{  
\sin^{2}\Big(\frac{x\cdot \xi}{2}\Big)
+
|\sin{(x\cdot \xi)}|
\Big\} W^{\frac{d+2}{d-2}}(x) 
\,dx  
\Big)
\,d\xi 
\\[6pt]
&\lesssim  
\int_{|\xi|\le 1} 
\frac{1}{(|\xi|^{2}+s)|\xi|^{2}} 
\Big(
\int_{\mathbb{R}^{4}}
\min\{1, |x| |\xi| \}
(1+|x|)^{-(d+2)} 
\,dx  
\Big)
\,d\xi 
\\[6pt]
&\lesssim
\int_{|\xi|\le 1} 
\frac{1}{|\xi|^{3}} 
\Big(
\int_{\mathbb{R}^{4}}
|x| (1+|x|)^{-(d+2)} 
\,dx  
\Big)
\,d\xi 
\lesssim 1
. 
\end{split} 
\end{equation}
It remains to estimate the last term on the right-hand side of \eqref{22/2/20/17:57}. By Cauchy-Schwarz inequality, Plancherel's theorem and $W(x) \sim (1+|x|)^{-(d-2)}$, we see that 
\begin{equation}\label{22/2/21/10:29}
\Big|
\int_{1\le |\xi|}
\frac{\mathcal{F}[W^{\frac{d+2}{d-2}}](\xi)}{(|\xi|^{2}+s)|\xi|^{2}}
\,d\xi 
\Big|
\le 
\Big(
\int_{1\le |\xi|}
\frac{1}{|\xi|^{8}}
\,d\xi 
\Big)^{\frac{1}{2}}
\| \mathcal{F}[W^{\frac{d+2}{d-2}}]\|_{L^{2}}
\lesssim  1
.
\end{equation} 
Then, \eqref{22/2/20/16:33} follows from \eqref{22/2/20/18:10}, 
 \eqref{22/2/21/9:15} and \eqref{22/2/21/10:29}.
\end{proof}

The following lemma follows immediately from Lemma \ref{22/1/25/7:1} and Lemma \ref{22/2/20/16:32}: 
\begin{lemma}\label{20/12/22/13:55}
Assume $d=3,4$.  Then, the following holds for all $0< s <1$:   
\begin{equation}\label{20/12/22/13:58}
\big|
\delta(s) \langle 
(-\Delta +s)^{-1} W 
, 
V\Lambda W 
\rangle 
- A_{1}
\big|   
=o_{s}(1)
\qquad 
\mbox{with}~~A_{1}:=\mathscr{C}_{0}A_{0}
,
\end{equation}
where $\mathscr{C}_{0}$ and $A_{0}$ are the same constants as in Lemma \ref{22/1/25/7:1} and \eqref{22/2/21/10:51}, respectively. 
\end{lemma}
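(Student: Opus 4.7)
The plan is to simply chain together the two preceding lemmas, since the statement says the result follows immediately. First, I would apply Lemma \ref{22/1/25/7:1} with the choice $f=W$. Since $W \in L^{2^{*}}(\mathbb{R}^{d})$ (the Aubin-Talenti function is in $\dot{H}^{1}(\mathbb{R}^{d})$ and by Sobolev embedding also in $L^{2^{*}}$), the lemma yields
\begin{equation*}
\bigl|\langle (-\Delta +s)^{-1} W, V\Lambda W \rangle - \mathscr{C}_{0} \{(-\Delta +s)^{-1}W\}(0)\bigr| \lesssim \|W\|_{L^{2^{*}}} \lesssim 1.
\end{equation*}

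Next, multiply through by $\delta(s)$. Since $\delta(s)\to 0$ as $s\to 0^{+}$ in both dimensions $d=3,4$ (by the explicit form \eqref{19/01/27/16:58}), the right-hand side becomes $\delta(s)\cdot O(1) = o_{s}(1)$. So
\begin{equation*}
\delta(s)\langle (-\Delta +s)^{-1} W, V\Lambda W \rangle = \mathscr{C}_{0}\,\delta(s)\{(-\Delta +s)^{-1}W\}(0) + o_{s}(1).
\end{equation*}

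Finally, substitute Lemma \ref{22/2/20/16:32}, which says $\delta(s)\{(-\Delta+s)^{-1}W\}(0) = A_{0} + o_{s}(1)$, to obtain
\begin{equation*}
\delta(s)\langle (-\Delta +s)^{-1} W, V\Lambda W \rangle = \mathscr{C}_{0}A_{0} + o_{s}(1) = A_{1} + o_{s}(1),
\end{equation*}
which is exactly the claim. There is no real obstacle here; the only point worth double-checking is that $\delta(s)\to 0$, which kills the uniform $O(1)$ error coming from Lemma \ref{22/1/25/7:1}. The hard technical work was done in the two preceding lemmas (extracting the singular behavior of $(-\Delta+s)^{-1}W$ at the origin and converting the pairing against $V\Lambda W$ into an evaluation at $0$ via the decomposition $\Lambda W = -\tfrac{\{d(d-2)\}^{d/2}}{2d}|x|^{-(d-2)}+Z$); this lemma is just their composition.
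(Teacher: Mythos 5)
Your argument is exactly the one the paper intends: the paper explicitly states that Lemma \ref{20/12/22/13:55} ``follows immediately from Lemma \ref{22/1/25/7:1} and Lemma \ref{22/2/20/16:32},'' and your chaining of the two (applying the former with $f=W$, multiplying by $\delta(s)$, and noting $\delta(s)\to 0$ kills the $O(1)$ remainder before invoking the latter) is precisely that composition, carried out correctly.
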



In order to treat the term $g$ in \eqref{eq:1.1}, we need the following lemma: 
\begin{lemma}\label{18/12/19/01:00}
Assume $d=3,4$. Under Assumptions \ref{21/12/23/14:45}, the following holds for all $\omega>0$, $0< s <1$, $\mu>0$ and $\max\{1, \frac{d}{(d-2)p_{1}} \} < r <\frac{d}{2}$:  
\begin{equation}\label{18/12/20/20:57}
\begin{split} 
&
\big|
\langle (-\Delta+s )^{-1} g(\mu M_{\omega} W) , V\Lambda W \rangle
+
\langle g(\mu M_{\omega} W), \Lambda W \rangle 
\big|
\\[6pt]
&\lesssim 
\{ (\mu M_{\omega})^{p_{1}} +(\mu M_{\omega})^{p_{2}} \}
s^{\frac{d}{2r}-1}  
,
\end{split}
\end{equation}
where the implicit constant may depend on $r$ (independent of $\omega$, $s$ and $\mu$). 
 Note that $p_{1}>\frac{2}{d-2}$ implies $\frac{d}{(d-2)p_{1}}<\frac{d}{2}$.   
\end{lemma}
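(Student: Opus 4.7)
The plan is to exploit the degeneracy identity $V\Lambda W = \Delta\Lambda W$ from \eqref{20/9/13/9:52}. Setting $f := g(\mu M_\omega W)$, combining the operator identity $\Delta(-\Delta+s)^{-1} = -I + s(-\Delta+s)^{-1}$ with integration by parts (valid since $\Lambda W$ is smooth and $(-\Delta+s)^{-1}f$ decays appropriately) produces
\[
\langle (-\Delta+s)^{-1}f, V\Lambda W\rangle + \langle f, \Lambda W\rangle = s\,\langle (-\Delta+s)^{-1}f, \Lambda W\rangle.
\]
The task thus reduces to bounding $s\,|\langle (-\Delta+s)^{-1}f, \Lambda W\rangle|$.

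Since $\Lambda W \notin L^{d/(d-2)}$ for $d=3,4$ (it belongs only to the weak space, the borderline case for direct duality), I would reuse the decomposition $\Lambda W = -\frac{\{d(d-2)\}^{d/2}}{2d}|x|^{-(d-2)} + Z$ from \eqref{18/12/03/10:17}. The function $Z$ is regular at the origin and decays like $|x|^{-d}$ at infinity, so $Z \in L^{d/(d-2)}$. For this smooth part, Hölder's inequality with exponents $(d/2, d/(d-2))$ combined with Lemma \ref{18/11/05/10:29}(i) applied with $q_1 = r$, $q_2 = d/2$ (the constraint $d(1/r - 2/d) < 2$ reduces to $r > d/4$, guaranteed by $r > 1$) yield
\[
s\,|\langle (-\Delta+s)^{-1}f, Z\rangle| \lesssim s\cdot s^{\frac{d}{2r}-2}\|f\|_{L^r}\|Z\|_{L^{d/(d-2)}} \lesssim s^{\frac{d}{2r}-1}\|f\|_{L^r}.
\]

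For the singular part $|x|^{-(d-2)}$, I would use that it equals, up to the dimensional constant $(d-2)|S^{d-1}|$, the fundamental solution $(-\Delta)^{-1}\delta_0$. Applying the resolvent identity $(-\Delta)^{-1}(-\Delta+s)^{-1} = s^{-1}[(-\Delta)^{-1} - (-\Delta+s)^{-1}]$ gives
\[
s\bigl\langle (-\Delta+s)^{-1}f, |x|^{-(d-2)}\bigr\rangle = (d-2)|S^{d-1}|\int_{\mathbb{R}^d}\bigl[G_0(y) - G_s(y)\bigr]f(y)\,dy,
\]
where $G_0$, $G_s$ denote the Green's functions of $-\Delta$ and $-\Delta+s$. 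Hölder's inequality then reduces the estimate to $\|G_0 - G_s\|_{L^{r'}} \lesssim s^{\frac{d}{2r}-1}$. This I would prove by the scaling $G_s(y) = s^{d/2-1}G_1(s^{1/2}y)$: substituting $y = s^{-1/2}z$, the $L^{r'}$ norm factors as $s^{\frac{d}{2r}-1}$ times $\bigl(\int|c_d|z|^{-(d-2)} - G_1(z)|^{r'}\,dz\bigr)^{1/r'}$, and this residual integral is finite precisely when $r < d/2$ (integrability at $\infty$), while being unconditionally integrable near $z=0$ (the difference is bounded for $d=3$ and has only a mild logarithmic singularity for $d=4$).

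Combining the two pieces with $\|g(\mu M_\omega W)\|_{L^r} \lesssim (\mu M_\omega)^{p_1}\|W^{p_1}\|_{L^r} + (\mu M_\omega)^{p_2}\|W^{p_2}\|_{L^r} \lesssim (\mu M_\omega)^{p_1} + (\mu M_\omega)^{p_2}$ (from \eqref{21/12/24/13:39}, where the assumption $r > d/((d-2)p_1)$ ensures $W^{p_1}, W^{p_2} \in L^r$) completes the argument. The main obstacle is precisely the borderline decay of $\Lambda W$, which rules out a single uniform Hölder/resolvent estimate; isolating the fundamental-solution piece $|x|^{-(d-2)}$ and analysing the Green's function difference at scale $s^{-1/2}$ is what produces the sharp rate $s^{\frac{d}{2r}-1}$ rather than a weaker $O(1)$ bound.
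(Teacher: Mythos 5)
Your initial reduction is correct and matches the paper: using $V\Lambda W=\Delta\Lambda W$ and $\Delta(-\Delta+s)^{-1}=-I+s(-\Delta+s)^{-1}$ brings the task down to bounding $s\,|\langle(-\Delta+s)^{-1}g(\mu M_\omega W),\Lambda W\rangle|$. The step that fails is your treatment of the ``smooth'' part $Z$. The function $Z$ defined in \eqref{18/12/04/09:43} is \emph{not} regular at the origin: writing $\tilde{r}=\frac{|x|^{2}}{d(d-2)}$, the term $\frac{|x|^{2}}{2d}\,\tilde{r}^{-d/2}=\frac{\{d(d-2)\}^{d/2}}{2d}|x|^{-(d-2)}$ inside \eqref{18/12/04/09:43} survives uncancelled, so that
\[
Z(x)=\frac{\{d(d-2)\}^{d/2}}{2d}\,|x|^{-(d-2)}+O(1)\qquad\text{as }|x|\to 0.
\]
This is unavoidable: $\Lambda W$ is \emph{bounded} at the origin, while the subtracted profile $-\frac{\{d(d-2)\}^{d/2}}{2d}|x|^{-(d-2)}$ is singular there, and the decomposition \eqref{18/12/03/10:17} is designed only to isolate the far-field tail of $\Lambda W$, not its near-origin behavior. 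Consequently $|Z(x)|^{d/(d-2)}\sim |x|^{-d}$ near $0$, which is not locally integrable, so $\|Z\|_{L^{d/(d-2)}}=\infty$ and your H\"older pairing $\|(-\Delta+s)^{-1}f\|_{L^{d/2}}\|Z\|_{L^{d/(d-2)}}$ is vacuous. (The paper only places $Z$ in $L^{2d/(d+2)}$ — see \eqref{22/2/22/10:31} — and, crucially, $2d/(d+2)<d/(d-2)$; the difference is exactly the near-origin singularity you overlooked.)

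The decomposition is in fact unnecessary here. Since $\Lambda W$ is bounded near $0$ and $\sim|x|^{-(d-2)}$ at infinity, it lies in $L^{d/(d-2)}_{\rm weak}(\mathbb{R}^d)$, and the paper applies the weak-type resolvent estimate \eqref{18/11/24/13:56} of Lemma \ref{18/11/05/10:29} directly to \emph{all} of $\Lambda W$, giving $\|(-\Delta+s)^{-1}\Lambda W\|_{L^{r/(r-1)}}\lesssim s^{\frac{d}{2r}-2}\|\Lambda W\|_{L^{d/(d-2)}_{\rm weak}}$; combined with $g(\mu M_\omega W)\lesssim\{(\mu M_\omega)^{p_1}+(\mu M_\omega)^{p_2}\}W^{p_1}\in L^{r}$ (using $W\le 1$ and $(d-2)p_{1}r>d$), this produces the claimed bound in one line without any near/far splitting of $\Lambda W$. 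Your Green's-function argument for the $|x|^{-(d-2)}$ piece is, for what it is worth, correct — but it cannot be paired with a smooth remainder that lives in $L^{d/(d-2)}$, because no such remainder exists for \emph{that} decomposition; you would need to truncate $|x|^{-(d-2)}$ away from the origin (e.g.\ subtract only $\chi_{\{|x|\ge 1\}}|x|^{-(d-2)}$), which would then force a corresponding modification of the Green's-function step.
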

\begin{proof}[Proof of Lemma \ref{18/12/19/01:00}]
Let $\omega>0$, $0< s <1$ and $\mu>0$. 
 Observe from $V \Lambda W=\Delta W$ (see \eqref{20/9/13/9:52}) that 
\begin{equation}\label{18/12/20/20:25}
\begin{split}
&\langle (-\Delta+s)^{-1} g(\mu M_{\omega} W) , V\Lambda W \rangle
\\[6pt]
&=
\langle  g(\mu M_{\omega} W), (-\Delta+s)^{-1} (\Delta -s + s)\Lambda W \rangle
\\[6pt]
&=
-\langle g(\mu M_{\omega} W), \Lambda W \rangle 
+ 
s \langle  g(\mu M_{\omega} W), (-\Delta+s)^{-1} \Lambda W \rangle
.
\end{split}
\end{equation}
Let $\max\{ 1, \frac{d}{(d-2)p_{1}}\} < r <\frac{d}{2}$. 
 Then, by H\"older's inequality, \eqref{21/12/24/13:39}, 
 $W\le 1$, $W\in L^{p_{1}r}(\mathbb{R}^{d})$,  
 and \eqref{18/11/24/13:56} in Lemma \ref{18/11/05/10:29}, 
 we see that  
\begin{equation}\label{18/12/25/10:28}
\begin{split}
&s
\big| 
\langle  g(\mu M_{\omega}W), (-\Delta+s)^{-1} \Lambda W \rangle
\big|
\\[6pt]
&\lesssim  
s  
\{ (\mu M_{\omega})^{p_{1}} +(\mu M_{\omega})^{p_{2}} \}
\| W^{p_{1}} \|_{L^{r}} 
\|(-\Delta+s)^{-1}  \Lambda W \|_{L^{\frac{r}{r-1}}}
\\[6pt]
&\lesssim
s \{ (\mu M_{\omega})^{p_{1}} +(\mu M_{\omega})^{p_{2}} \}
s^{\frac{d}{2r}-2} 
\|\Lambda W \|_{L_{\rm{weak}}^{\frac{d}{d-2}}}
\lesssim
\{ (\mu M_{\omega})^{p_{1}} +(\mu M_{\omega})^{p_{2}} \}
s^{\frac{d}{2r}-1} ,
\end{split}
\end{equation}
where the implicit constant may depend on $r$.  
 Putting \eqref{18/12/20/20:25} and \eqref{18/12/25/10:28} together, 
 we obtain the desired estimate \eqref{18/12/20/20:57}. 
\end{proof}

\section{Properties of ground states}\label{18/11/05/10:01}

In this section, we give the properties of ground states which are necessary in order to prove Theorem \ref{20/8/17/11:39}.

First, note that every $H^{1}$-solution $u$ to \eqref{eq:1.1} obeys the following identity, as well as $\mathcal{N}_{\omega}(u)=0$ (see (2.1) of \cite{Berestycki-Lions}): 
\begin{equation}\label{21/12/24/14:46}
\mathcal{P}_{\omega}(u)
:=
\frac{1}{2^{*}}\|\nabla u\|_{L^{2}}^{2}
+
\frac{\omega}{2} \|u\|_{L^{2}}^{2}
-
\frac{1}{2^{*}} \|u\|_{L^{2^{*}}}^{2^{*}} 
- 
\int_{\mathbb{R}^{d}} G(|u|) =0
.
\end{equation}
Computing $\mathcal{P}_{\omega} - \dfrac{1}{2^{*}}\mathcal{N}_{\omega}$, 
 we see that  every $H^{1}$-solution $u$ to \eqref{eq:1.1} obeys 
\begin{equation}\label{18/11/24/17:41}
\omega 
\|u \|_{L^{2}}^{2}
=
d \int_{\mathbb{R}^{d}}
\Big\{ 
G(|u|) 
-\frac{1}{2^{*}} g(|u|) |u|
\Big\}
. 
\end{equation}

We give other basic properties of ground states in Section \ref{22/1/1/10:41}. Furthermore, we give key properties of ground states to proving the nondegeneracy in Section \ref{21/10/14/18:20}. 


\subsection{Basic properties of ground states}\label{22/1/1/10:41}

Under Assumption \ref{21/12/23/14:45}, we can prove several properties of grand states to \eqref{eq:1.1}. In particular, we can prove the following three lemmas (Lemma \ref{proposition:2.3}, Lemma \ref{theorem:3.1} and Lemma \ref{18/09/05/01:05}) by the same argument as in \cite{AIIKN}:

\begin{lemma}[cf. Lemma 2.3 of \cite{AIIKN}]
\label{proposition:2.3}
Assume $d\ge 3$. 
 Under Assumption \ref{21/12/23/14:45}, the following hold: 
\begin{align}
\label{21/10/14/15:22}
&
\lim_{\omega \to \infty} M_{\omega} = \infty,
\\[6pt]
\label{21/12/25/15:21}
&\lim_{\omega \to \infty} M_{\omega}^{-\frac{4}{d-2}} \omega
=0.
\end{align} 
\end{lemma}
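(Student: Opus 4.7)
The plan is to derive both statements from evaluating the equation at the maximum point, upgraded by a compactness-and-contradiction argument for the second part.

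For \eqref{21/10/14/15:22}, since Proposition \ref{21/11/26/15:35} places the maximum of $\Phi_\omega$ at the origin, we have $-\Delta \Phi_\omega(0) \ge 0$, and \eqref{eq:1.1} evaluated at $x=0$ yields $\omega M_\omega \le M_\omega^{(d+2)/(d-2)} + g(M_\omega)$. The growth bound $|g(t)| \lesssim t^{p_1} + t^{p_2}$ from \eqref{21/12/24/13:39} immediately rules out $M_\omega$ staying bounded along any sequence $\omega_n \to \infty$, since the right-hand side would then remain bounded while $\omega_n M_{\omega_n} \to \infty$.

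For \eqref{21/12/25/15:21}, the same trick applied to the rescaled equation \eqref{22/3/10/16:12} at $x = 0$ (using $\widetilde{\Phi}_\omega(0) = 1$ and $-\Delta \widetilde{\Phi}_\omega(0) \ge 0$) gives $\omega M_\omega^{-4/(d-2)} \le 1 + M_\omega^{-(d+2)/(d-2)} g(M_\omega)$; by part (i) together with $p_2 < (d+2)/(d-2)$ the last term is $o(1)$, so $\sigma_\omega := \omega M_\omega^{-4/(d-2)}$ is uniformly bounded. To upgrade this to $\sigma_\omega \to 0$, I argue by contradiction: assume $\sigma_{\omega_n} \to \sigma_\infty > 0$ along a subsequence. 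From the identity $\mathcal{S}_\omega(\Phi_\omega) - \mathcal{P}_\omega(\Phi_\omega) = (1/d)\|\nabla \Phi_\omega\|_{L^2}^2$ combined with a uniform upper bound on $m_\omega$ built from a test function adapted to $W$, one controls $\|\nabla \widetilde{\Phi}_{\omega_n}\|_{L^2} = \|\nabla \Phi_{\omega_n}\|_{L^2}$ uniformly. Together with $\|\widetilde{\Phi}_{\omega_n}\|_{L^\infty} = 1$ and elliptic regularity applied to \eqref{22/3/10/16:12}, this yields $C^1_{\mathrm{loc}}$ compactness of $\{\widetilde{\Phi}_{\omega_n}\}$, producing a nonnegative radially decreasing limit $\widetilde{\Phi}_\infty \in \dot{H}^1(\mathbb{R}^d)$ with $\widetilde{\Phi}_\infty(0) = 1$. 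By \eqref{21/12/24/13:39} and $p_1, p_2 < (d+2)/(d-2)$, the rescaled $g$-term in \eqref{22/3/10/16:12} vanishes uniformly, so $\widetilde{\Phi}_\infty$ solves $-\Delta u + \sigma_\infty u = u^{(d+2)/(d-2)}$. Exponential decay from the Yukawa-type resolvent $(-\Delta + \sigma_\infty)^{-1}$ places $\widetilde{\Phi}_\infty \in L^2(\mathbb{R}^d)$, and the critical-exponent Pohozaev identity \eqref{18/11/24/17:41} then forces $\sigma_\infty \|\widetilde{\Phi}_\infty\|_{L^2}^2 = 0$, contradicting $\widetilde{\Phi}_\infty(0) = 1$.

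The main technical obstacle is the uniform upper bound on $m_\omega$: for $d=3,4$ the natural limit profile $W$ fails to lie in $L^2(\mathbb{R}^d)$, so one cannot directly use a scaled $W$ as a competitor on the Nehari manifold, and the truncated-$W$ test function construction from \cite{AIIKN} must be invoked. A secondary delicate point is establishing $L^2$-membership of $\widetilde{\Phi}_\infty$ in the contradiction step when $\sigma_\infty > 0$; once both are in hand, the compactness extraction and the Pohozaev-based contradiction follow standard lines.
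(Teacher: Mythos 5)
Your plan is essentially the same maximum-point plus Pohozaev-contradiction argument that the paper delegates to~\cite{AIIKN}, and the overall structure is sound. However, the reasoning you give for \eqref{21/10/14/15:22} has a small gap: from the inequality $\omega M_\omega \le M_\omega^{\frac{d+2}{d-2}} + g(M_\omega)$ you conclude that boundedness of $M_{\omega_n}$ is incompatible with $\omega_n\to\infty$ because ``$\omega_n M_{\omega_n}\to\infty$ while the right-hand side stays bounded.'' But if $M_{\omega_n}$ is bounded yet tends to $0$, the product $\omega_n M_{\omega_n}$ need not tend to infinity, so that sentence does not dispose of all cases. The fix is trivial: divide through by $M_\omega$ to get $\omega \le M_\omega^{\frac{4}{d-2}} + g(M_\omega)/M_\omega$, and note that since $g$ is continuous with $g(0)=0$, $g(t)/t\lesssim t^{p_1-1}+t^{p_2-1}$, and $p_1>1$, the right-hand side is bounded uniformly for $M_\omega$ in any bounded set (including near $0$), which directly contradicts $\omega\to\infty$. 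With that patch part (i) is correct.

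For \eqref{21/12/25/15:21}, the maximum-point bound $\sigma_\omega \le 1 + o(1)$ and the compactness/contradiction scheme you sketch are the right route, and you correctly identify the two genuine technical loads for $d=3,4$: obtaining the uniform $\dot H^1$ bound via a uniform upper estimate on $m_\omega$ (since $W\notin L^2$, requiring a truncated-$W$ competitor on the Nehari manifold), and establishing exponential decay (so that the limit profile lies in $L^2$) before invoking the Pohozaev identity to kill a hypothetical $\sigma_\infty>0$. Those are exactly the points handled in the cited construction in~\cite{AIIKN}, so the outline matches the intended argument.
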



\begin{lemma}[cf. Proposition 2.1 \cite{AIIKN}]
\label{theorem:3.1}
 Assume $d\ge 3$. 
 Under Assumption \ref{21/12/23/14:45}, 
 the following holds:
\begin{equation}\label{12/25/16:10}
\lim_{\omega \to \infty} \widetilde{\Phi}_{\omega} = W
\quad 
\mbox{strongly in $\dot{H}^{1}(\mathbb{R}^{d})$ and strongly in $C^2_{\rm loc}(\Rd)$}.
\end{equation}  
\end{lemma}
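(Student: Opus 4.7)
\textbf{Proof plan for Lemma \ref{theorem:3.1}.} The strategy is a compactness and uniqueness argument based on the rescaled equation \eqref{22/3/10/16:12}. First I would exploit the $\dot{H}^1$-scale invariance of $T_\lambda$: a direct computation gives $\|\nabla \widetilde{\Phi}_\omega\|_{L^2}=\|\nabla \Phi_\omega\|_{L^2}$ and $\|\widetilde{\Phi}_\omega\|_{L^{2^*}}=\|\Phi_\omega\|_{L^{2^*}}$, so it suffices to control these quantities uniformly in $\omega$. Using $W$ (suitably scaled) as a test function on the Nehari manifold and the fact that $W$ saturates the Sobolev inequality, one obtains $m_\omega \le m_\infty + o_\omega(1)$ with $m_\infty := \tfrac{1}{d}S^{d/2}$. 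Combined with Pohozaev's identity \eqref{18/11/24/17:41} and the Sobolev/Nehari relations for $\Phi_\omega$, this yields uniform $\dot{H}^1$ and $L^{2^*}$ bounds, and in fact $\|\nabla\Phi_\omega\|_{L^2}^2, \|\Phi_\omega\|_{L^{2^*}}^{2^*} \to S^{d/2}$.

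Next I would extract a subsequential limit. Since $\widetilde{\Phi}_\omega$ is radial, positive, strictly decreasing in $|x|$, and satisfies $\widetilde{\Phi}_\omega(0)=1$, Strauss-type pointwise bounds of the form $\widetilde{\Phi}_\omega(x) \lesssim |x|^{-(d-2)/2}\|\nabla\widetilde\Phi_\omega\|_{L^2}$ (away from the origin) give, after extracting a subsequence, $\widetilde{\Phi}_{\omega_n} \rightharpoonup V$ weakly in $\dot H^1(\mathbb{R}^d)$ and pointwise a.e.\ for some radial, nonincreasing $V$ with $V(0)=1$. Now one passes to the limit in \eqref{22/3/10/16:12}. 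By Lemma \ref{proposition:2.3}, $M_{\omega_n}^{-4/(d-2)}\omega_n\to 0$, so the mass term vanishes. For the remainder term, Assumption \ref{21/12/23/14:45} gives $|g(t)|\lesssim t^{p_1}+t^{p_2}$, hence
\begin{equation*}
M_\omega^{-(d+2)/(d-2)} |g(M_\omega \widetilde{\Phi}_\omega)| \lesssim M_\omega^{p_1-(d+2)/(d-2)} \widetilde{\Phi}_\omega^{p_1} + M_\omega^{p_2-(d+2)/(d-2)} \widetilde{\Phi}_\omega^{p_2},
\end{equation*}
and both exponents on $M_\omega$ are negative since $p_1,p_2 < (d+2)/(d-2)$, so this term tends to $0$ locally. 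Therefore $V$ is a nonnegative radial weak solution of $-\Delta V - V^{(d+2)/(d-2)}=0$ with $V(0)=1$. By the Caffarelli--Gidas--Spruck classification, $V=W$.

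To upgrade to strong $\dot{H}^1$-convergence, I would show $\|\nabla \widetilde{\Phi}_{\omega_n}\|_{L^2}^2 \to \|\nabla W\|_{L^2}^2$. Testing the rescaled equation against $\widetilde{\Phi}_{\omega_n}$ gives $\|\nabla \widetilde\Phi_{\omega_n}\|_{L^2}^2 = \|\widetilde\Phi_{\omega_n}\|_{L^{2^*}}^{2^*} + o(1)$ after using the decay of the mass and $g$ terms (in the $L^{2^*}$ pairing, the same exponent arithmetic as above applies); combined with the energy asymptotics already established and the weak convergence, Brezis--Lieb decomposition forces convergence of the $L^{2^*}$-norms and hence of the $\dot H^1$-norms. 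Together with weak convergence this yields $\widetilde{\Phi}_{\omega_n}\to W$ strongly in $\dot H^1$, and since the limit is uniquely identified, the full family converges. Finally, $C^2_{\mathrm{loc}}$-convergence follows by standard interior elliptic regularity applied to \eqref{22/3/10/16:12}, using the locally uniform bound on $\widetilde{\Phi}_\omega$ (which follows from $\widetilde\Phi_\omega\le 1$) and the locally uniform smallness of the perturbative terms.

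The main obstacle is the potentially singular behavior of $g$ at the origin. The assumption $p_1>\max\{1,2/(d-2)\}$ is exactly what is needed to make $M_\omega^{p_1-(d+2)/(d-2)}$ decay and to keep the pairing $\langle M_\omega^{-(d+2)/(d-2)} g(M_\omega\widetilde\Phi_\omega), \varphi\rangle$ controlled by $\|\nabla\varphi\|_{L^2}$ via Sobolev embedding, so the limit equation really loses the $g$-term. Handling this uniformly, and simultaneously across both the $t^{p_1}$ and $t^{p_2}$ regimes via a Hölder splitting, is the delicate step.
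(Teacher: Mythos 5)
The paper does not include an explicit proof of this lemma --- it defers to Proposition 2.1 of \cite{AIIKN} --- so your proposal should be measured against the standard compactness-plus-Liouville argument that reference uses, and on that score you have the right blueprint: scale invariance of $T_\lambda$, energy asymptotics $m_\omega\to \frac1d S^{d/2}$, extraction of a subsequential limit, passage to the limit in the rescaled equation (with the mass and $g$ terms vanishing by the exponent arithmetic and Lemma~\ref{proposition:2.3}), classification of the limit by Caffarelli--Gidas--Spruck, and an upgrade to strong $\dot H^1$ and $C^2_{\rm loc}$ convergence. This is essentially the same approach as the paper's cited source.

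Two points of order worth tightening. First, you cannot conclude $V(0)=1$ from weak-$\dot H^1$ plus a.e.\ pointwise convergence alone, since $\{0\}$ is a null set; you must first run the interior elliptic estimates on \eqref{22/3/10/16:12} (using $0\le\widetilde\Phi_\omega\le 1$ and the locally uniform smallness of the mass and $g$ terms) to obtain $C^{2,\alpha}_{\rm loc}$-bounded subsequences, then Arzel\`a--Ascoli gives local uniform convergence, and only then does $\widetilde\Phi_\omega(0)=1$ pass to the limit. So the elliptic-regularity step you place at the end should come before the identification of $V$, and it also supplies the regularity needed to apply the Caffarelli--Gidas--Spruck classification. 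Second, in the strong-$\dot H^1$ step you need to rule out concentration at a point or at infinity before Brezis--Lieb forces the $L^{2^*}$-norms to converge; this is exactly where the local uniform convergence $\widetilde\Phi_{\omega_n}\to W$ on compacts (together with $\|\nabla\widetilde\Phi_{\omega_n}\|_{L^2}^2 \to \|\nabla W\|_{L^2}^2 = S^{d/2}$) is used to preclude any residual bubble. With the sequencing of steps adjusted accordingly, the argument closes.
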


\begin{lemma}[cf. Proposition 3.1 of \cite{AIIKN}]\label{18/09/05/01:05}
Assume $d\ge 3$. 
 Under Assumption \ref{21/12/23/14:45}, there exists  $\omega_{\rm{dec}}>0$ such that for any $\omega>\omega_{\rm{dec}}$ and  $x\in \mathbb{R}^{d}$,  
\begin{equation}\label{eq:3.1}
\widetilde{\Phi}_{\omega}(x) 
\lesssim \left(1 +|x| \right)^{-(d-2)}
.
\end{equation}
\end{lemma}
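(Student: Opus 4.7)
The plan is to exploit the rescaled equation \eqref{22/3/10/16:12} for $\widetilde{\Phi}_{\omega}$, recasting it in integral form as $\widetilde{\Phi}_{\omega} = (-\Delta + \bar{s}_{\omega})^{-1} F_{\omega}$, with $\bar{s}_{\omega} := M_{\omega}^{-\frac{4}{d-2}}\omega$ and $F_{\omega} := \widetilde{\Phi}_{\omega}^{\frac{d+2}{d-2}} + M_{\omega}^{-\frac{d+2}{d-2}} g(M_{\omega}\widetilde{\Phi}_{\omega})$. By Lemma \ref{proposition:2.3} we have $\bar{s}_{\omega}\to 0^{+}$; the Green's kernel of $(-\Delta+\bar{s}_{\omega})^{-1}$ is dominated pointwise by the Newtonian potential $c_{d}|x-y|^{-(d-2)}$ uniformly in $\omega$. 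Using $\widetilde{\Phi}_{\omega}\leq 1$ from \eqref{21/12/24/14:35}, the growth bounds \eqref{21/12/24/13:39}, and $p_{1},p_{2}<\frac{d+2}{d-2}$, the source admits $F_{\omega}(y) \lesssim \widetilde{\Phi}_{\omega}(y)^{\frac{d+2}{d-2}} + \varepsilon_{\omega}\,\widetilde{\Phi}_{\omega}(y)^{p_{1}}$ with $\varepsilon_{\omega}\to 0$ as $\omega\to\infty$ (since $M_{\omega}^{p_{j}-\frac{d+2}{d-2}}\to 0$ by \eqref{21/10/14/15:22}).

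The first step I would carry out is to extract a pointwise decay strictly better than the critical rate $|x|^{-(d-2)/2}$. Proposition \ref{21/11/26/15:35} gives the radial monotonicity of $\widetilde{\Phi}_{\omega}$, and Lemma \ref{theorem:3.1} gives strong convergence $\widetilde{\Phi}_{\omega}\to W$ in $\dot{H}^{1}(\mathbb{R}^{d})$, hence in $L^{2^{*}}(\mathbb{R}^{d})$ by Sobolev. Strong convergence implies uniform integrability of $\widetilde{\Phi}_{\omega}^{2^{*}}$ at infinity, and combined with monotonicity this yields $\widetilde{\Phi}_{\omega}(R)\leq \eta(R)\,R^{-(d-2)/2}$ with $\eta(R)\to 0$ uniformly in $\omega$. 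In particular one can fix some $\alpha_{0}>(d-2)/2$ and $R_{0}>0$ with $\widetilde{\Phi}_{\omega}(x)\lesssim (1+|x|)^{-\alpha_{0}}$ uniformly for $\omega$ large.

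The second step is a bootstrap on the exponent. Assuming $\widetilde{\Phi}_{\omega}(y)\lesssim (1+|y|)^{-\alpha}$ with $\alpha>(d-2)/2$, I would insert this into the integral representation and split at $|y|=R_{0}$: the interior piece is controlled by $\widetilde{\Phi}_{\omega}\leq 1$ and contributes $\lesssim (1+|x|)^{-(d-2)}$ for $|x|\geq 2R_{0}$, while the exterior piece, by a Riesz-type convolution estimate, gives $\int_{|y|\geq R_{0}} |y|^{-\alpha(d+2)/(d-2)}|x-y|^{-(d-2)}\,dy \lesssim (1+|x|)^{-\min\bigl(\alpha\frac{d+2}{d-2}-2,\,d-2\bigr)}$. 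The lower-order $p_{1}$-contribution is absorbed into the leading bound by the vanishing prefactor $\varepsilon_{\omega}$. Since the bootstrap map $\alpha\mapsto \alpha\frac{d+2}{d-2}-2$ has slope $\frac{d+2}{d-2}>1$ with unique fixed point $(d-2)/2$, the orbit starting from $\alpha_{0}>(d-2)/2$ grows and saturates at $\alpha=d-2$ after finitely many iterations.

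The main obstacle is precisely the first step: the critical rate $|x|^{-(d-2)/2}$ is the fixed point of the bootstrap map, and it is also exactly what one can read off from $L^{2^{*}}$-boundedness plus radial monotonicity alone. Without a \emph{strict} improvement, the iteration is immobile. Upgrading mere $L^{2^{*}}$-boundedness to uniform integrability of $\widetilde{\Phi}_{\omega}^{2^{*}}$ at infinity, via the strong $\dot{H}^{1}$-convergence of Lemma \ref{theorem:3.1}, is what unsticks the bootstrap. The treatment of the lower-order $p_{1}$-term without degrading the final decay rate is a secondary technical point handled by the $\omega$-smallness of $\varepsilon_{\omega}$. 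Once the bootstrap saturates at $\alpha=d-2$, the bound on $\{|x|\geq R\}$ combines with the $C^{2}_{\rm loc}$-convergence $\widetilde{\Phi}_{\omega}\to W$ on $\{|x|\leq R\}$ (together with $W(x)\sim (1+|x|)^{-(d-2)}$) to yield \eqref{eq:3.1} globally.
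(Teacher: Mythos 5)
The paper does not reprove Lemma \ref{18/09/05/01:05}; it states that the argument of Proposition 3.1 of \cite{AIIKN} carries over under Assumption \ref{21/12/23/14:45}. So there is no in-paper proof to compare line by line, and I can only assess the self-contained proposal.

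You correctly identify the central difficulty: radial monotonicity plus a uniform $L^{2^{*}}$ bound only yield the critical rate $|x|^{-(d-2)/2}$, which is exactly the fixed point of the bootstrap map $\alpha\mapsto \alpha\tfrac{d+2}{d-2}-2$, so the iteration is immobile without a strict improvement. The flaw is that you do not actually produce such an improvement. From the uniform vanishing of $\int_{|y|\ge R}\widetilde{\Phi}_{\omega}^{2^{*}}$ together with monotonicity you obtain $\widetilde{\Phi}_{\omega}(R)\leq \eta(R)R^{-(d-2)/2}$ with $\eta(R)\to 0$ (uniformly in large $\omega$); but the next sentence, ``In particular one can fix some $\alpha_{0}>(d-2)/2$ \dots with $\widetilde{\Phi}_{\omega}(x)\lesssim(1+|x|)^{-\alpha_{0}}$,'' is a non sequitur. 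A vanishing prefactor is not a power-law gain: $\eta(R)$ may decay arbitrarily slowly (say, like $1/\log R$), in which case $\eta(R)R^{-(d-2)/2}$ is $o(R^{-(d-2)/2})$ but is not $\lesssim R^{-\alpha_{0}}$ for any fixed $\alpha_{0}>(d-2)/2$. To unstick the fixed point one must turn the smallness of $\eta(R)$ into a contraction on a weighted $L^{\infty}$ space: writing $\widetilde{\Phi}_{\omega}^{\frac{d+2}{d-2}}=\widetilde{\Phi}_{\omega}^{\frac{4}{d-2}}\widetilde{\Phi}_{\omega}$ and using $\widetilde{\Phi}_{\omega}^{\frac{4}{d-2}}(y)\leq \eta(R)^{\frac{4}{d-2}}|y|^{-2}$ on $|y|\geq R$, one gets a linear integral inequality of Hardy type with a small coupling $\eta(R)^{\frac{4}{d-2}}$ (or, equivalently, a barrier/maximum-principle argument with supersolutions $|y|^{-\beta}$, $\beta<d-2$). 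That step is missing entirely in the proposal, and it also needs the a priori finiteness of $\sup_{|x|\ge R}|x|^{\beta}\widetilde{\Phi}_{\omega}(x)$ for each fixed $\omega$ (available from the exponential decay of $\Phi_{\omega}$), which you never invoke.

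A second gap concerns the lower-order $g$-contribution. Having replaced the Yukawa kernel by the Newtonian potential, you claim the $p_{1}$-term is ``absorbed into the leading bound by the vanishing prefactor $\varepsilon_{\omega}$.'' Under the domination $(-\Delta+\bar{s}_{\omega})^{-1}\lesssim I_{2}$, the convolution $\int|x-y|^{-(d-2)}\widetilde{\Phi}_{\omega}^{p_{1}}(y)\,dy$ with $\widetilde{\Phi}_{\omega}^{p_{1}}(y)\lesssim |y|^{-p_{1}\alpha}$ converges only for $p_{1}\alpha>2$, and when it does it gives $|x|^{-(p_{1}\alpha-2)}$, saturating at $|x|^{-(d-2)}$ only if $p_{1}\alpha>d$. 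At the starting rate $\alpha=(d-2)/2$ this requires $p_{1}>\tfrac{4}{d-2}$, and at the target rate $\alpha=d-2$ it requires $p_{1}>\tfrac{d}{d-2}$; Assumption \ref{21/12/23/14:45} only guarantees $p_{1}>\max\{\tfrac{2}{d-2},1\}$, which is weaker. Thus, under your coarsening of the kernel, the $g$-term either diverges or degrades the decay rate, and the smallness of the constant $\varepsilon_{\omega}$ alone does not fix a wrong power of $|x|$. Handling this term correctly requires either keeping the exponential localization of the Yukawa kernel (so that the spatial integrals are effectively cut off at scale $\bar{s}_{\omega}^{-1/2}$, with the error quantitatively controlled by the relation between $\varepsilon_{\omega}$ and $\bar{s}_{\omega}$), or a barrier argument retaining the positive mass term $\bar{s}_{\omega}\widetilde{\Phi}_{\omega}$.
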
  

The following lemma follows from H\"older's inequality, Sobolev's one, 
 Lemma \ref{theorem:3.1} and Lemma \ref{18/09/05/01:05}:   
\begin{lemma}\label{eq:3.2}
Assume $d\ge 3$. 
 Under Assumption \ref{21/12/23/14:45}, the following holds:
\begin{equation}\label{22/1/1/11:53}
\lim_{\omega \to \infty} 
\| \widetilde{\Phi}_{\omega} -W \|_{L^{r}}
=0
\qquad 
\mbox{for all $\frac{d}{d-2} <r <\infty$}
.
\end{equation} 
\end{lemma}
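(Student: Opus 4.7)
I plan to split the range $\frac{d}{d-2}<r<\infty$ into three subcases ($r=2^{*}$, $r>2^{*}$, and $\frac{d}{d-2}<r<2^{*}$) and treat each by a different combination of Lemma \ref{theorem:3.1} and Lemma \ref{18/09/05/01:05}.

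For the endpoint $r=2^{*}$, the claim is immediate: the Sobolev embedding $\dHRd\hookrightarrow L^{2^{*}}(\Rd)$ applied to the $\dH$-convergence in Lemma \ref{theorem:3.1} yields $\|\TPho - W\|_{L^{2^{*}}}\to 0$.

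For $r>2^{*}$, I would interpolate with $L^{\infty}$. By \eqref{21/12/24/14:35} one has $\|\TPho\|_{L^{\infty}}=1$, and likewise $\|W\|_{L^{\infty}}=W(0)=1$ from \eqref{eq:1.16}. Hence
\begin{equation*}
\|\TPho - W\|_{L^{r}}^{r} \le \|\TPho-W\|_{L^{\infty}}^{r-2^{*}}\|\TPho-W\|_{L^{2^{*}}}^{2^{*}} \le 2^{r-2^{*}}\|\TPho-W\|_{L^{2^{*}}}^{2^{*}},
\end{equation*}
which tends to zero by the first step.

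For $\frac{d}{d-2}<r<2^{*}$, pure interpolation is insufficient, since Lemma \ref{theorem:3.1} provides no $L^{q}$-control with $q<2^{*}$. I would instead combine Lemma \ref{18/09/05/01:05} with the explicit decay of $W$ from \eqref{eq:1.16} to obtain the uniform pointwise bound $|\TPho(x)-W(x)|\lesssim (1+|x|)^{-(d-2)}$. The dominating function $(1+|x|)^{-(d-2)r}$ is integrable on $\Rd$ exactly because $r>\frac{d}{d-2}$, and the $C^{2}_{\mathrm{loc}}$ part of Lemma \ref{theorem:3.1} gives pointwise convergence $\TPho(x)\to W(x)$. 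The dominated convergence theorem then closes the argument.

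The only mildly delicate step is this last subrange: for $r<2^{*}$ the Sobolev embedding is useless and the proof genuinely relies on the tail estimate of Lemma \ref{18/09/05/01:05}. Once that pointwise bound is in hand, however, the conclusion is routine, so I do not anticipate any serious obstacle.
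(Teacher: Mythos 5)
Your proof is correct and uses exactly the ingredients the paper itself cites for this lemma (Lemma \ref{theorem:3.1}, Lemma \ref{18/09/05/01:05}, Sobolev, and H\"older/interpolation). The trichotomy $r=2^{*}$, $r>2^{*}$, $\frac{d}{d-2}<r<2^{*}$ and the dominated-convergence step in the last case are the natural implementation of the same idea, so this is essentially the paper's argument.
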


Although the above lemmas give us helpful information about the ground states, they are insufficient to prove Theorem \ref{20/8/17/11:39}. We need a certain refinement of Lemma \ref{eq:3.2} (see Proposition \ref{18/12/12/13:21} in Section \ref{21/10/14/18:20}). 
 For the refinement, we prepare the following lemma: 


\begin{lemma}\label{18/11/27/11:20}
Assume $d=3,4$. 
 Under Assumption \ref{21/12/23/14:45}, 
 there exists $\omega_{1}>0$ such that for any $\omega>\omega_{1}$, there exists $\mu(\omega)=1+o_{\omega}(1)$ such that  
\begin{align}
\label{18/11/27/07:03}
&
\langle 
\big\{ 1 + (-\Delta + s(\omega)  )^{-1} V \big\}   
\zeta_{\omega}, V \Lambda W
\rangle
=0
,
\\[6pt]
\label{18/11/27/07:02}
&
\lim_{\omega \to \infty} 
\|\nabla \zeta_{\omega} \|_{L^{2}}
=0
,
\qquad 
\lim_{\omega \to \infty} 
\|\zeta_{\omega} \|_{L^{r}}
=0 
\qquad \mbox{for all $\frac{d}{d-2}<r <\infty$}
,
\end{align}
where 
\begin{equation}\label{20/12/13/15:26}
s(\omega)
:=\{ \mu(\omega) M_{\omega} \}^{-\frac{4}{d-2}} \omega
,
\qquad 
\zeta_{\omega}
:= T_{\mu(\omega) M_{\omega}}[ \Phi_{\omega}]  -W
=
T_{\mu(\omega)} 
 [ \widetilde{\Phi}_{\omega}]  -W
.
\end{equation} 
\end{lemma}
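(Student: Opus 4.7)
The plan is to find $\mu(\omega)$ as a zero of a continuous function of $\mu$ near $1$ via the intermediate value theorem, and then read off \eqref{18/11/27/07:02} from $\mu(\omega)\to 1$. Define
\[
F_\omega(\mu) \;:=\; \bigl\langle \{1 + (-\Delta + s_\mu)^{-1}V\}\bigl(T_\mu[\widetilde{\Phi}_\omega]-W\bigr),\; V\Lambda W\bigr\rangle,\qquad s_\mu := (\mu M_\omega)^{-\frac{4}{d-2}}\omega,
\]
so that \eqref{18/11/27/07:03} is the equation $F_\omega(\mu(\omega))=0$. Continuity of $F_\omega$ in $\mu$ on a fixed compact neighbourhood of $1$ follows from continuity of $T_\mu$ in $\dot H^1$, Lemma \ref{proposition:2.3}, and the resolvent estimates of Section \ref{18/11/05/09:33}.

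First I would split
\[
T_\mu[\widetilde{\Phi}_\omega]-W \;=\; T_\mu[\widetilde{\Phi}_\omega-W]\;+\;\bigl(T_\mu[W]-W\bigr).
\]
By Lemma \ref{theorem:3.1} and Lemma \ref{eq:3.2}, the first summand is $o_\omega(1)$ in $\dot H^1\cap L^r$ uniformly for $\mu$ in such a neighbourhood; combined with Proposition \ref{18/11/17/07:17}, Lemma \ref{18/11/05/10:29} and Lemma \ref{22/1/25/7:1}, its contribution to $F_\omega(\mu)$ is $o_\omega(1)\,\delta(s_\mu)^{-1}$. For the second summand, the representation \eqref{20/9/25/11:45} gives
\[
T_\mu[W] - W \;=\; -\tfrac{2}{d-2}\int_1^\mu \lambda^{-1}\,T_\lambda[\Lambda W]\,d\lambda.
\]
Using $V\Lambda W=\Delta\Lambda W$ from \eqref{20/9/13/9:52}, the resolvent identity $(-\Delta+s)^{-1}\Delta = s(-\Delta+s)^{-1}-1$, and the pointwise asymptotic expansions in Lemmas \ref{22/1/25/7:1}, \ref{22/2/20/16:32} and \ref{20/12/22/13:55} (with the $g$-terms that enter through \eqref{22/3/10/16:12} absorbed by Lemma \ref{18/12/19/01:00} and Lemma \ref{proposition:2.3}), I would extract
\[
F_\omega(\mu) \;=\; -\tfrac{2A_1}{d-2}\log\mu\,\delta(s_\mu)^{-1}\;+\;o_\omega\!\bigl(\delta(s_\mu)^{-1}\bigr),
\]
uniformly for $\mu$ in a compact neighbourhood of $1$, where $A_1$ is as in Lemma \ref{20/12/22/13:55}.

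Since $A_1\ne 0$, fix any small $\eta_0>0$; the uniform expansion shows $F_\omega(1-\eta_0)$ and $F_\omega(1+\eta_0)$ have opposite signs once $\omega$ is large enough. IVT then produces $\mu(\omega)\in(1-\eta_0,1+\eta_0)$ with $F_\omega(\mu(\omega))=0$. Substituting $\mu=\mu(\omega)$ back into the expansion forces $\log\mu(\omega)=o_\omega(1)$, so $\mu(\omega)\to 1$. The convergence \eqref{18/11/27/07:02} is then immediate: Lemmas \ref{theorem:3.1}-\ref{eq:3.2} give the bound for $T_{\mu(\omega)}[\widetilde{\Phi}_\omega-W]$, and \eqref{20/9/25/11:45} together with $\mu(\omega)\to 1$ gives the bound for $T_{\mu(\omega)}[W]-W$.

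The main obstacle will be making the asymptotic expansion for $F_\omega(\mu)$ sharp with the correct nonzero coefficient. Because $\Lambda W\notin L^2(\mathbb{R}^d)$ for $d=3,4$, the pairings $\langle (-\Delta+s)^{-1}V h,\,V\Lambda W\rangle$ cannot be bounded by soft methods without losing the crucial singular factor $\delta(s)^{-1}$; one must instead exploit the explicit decomposition of $\Lambda W$ used in the proof of Lemma \ref{22/1/25/7:1} together with the precise evaluation of $\{(-\Delta+s)^{-1}W\}(0)$ provided by Lemma \ref{22/2/20/16:32}. Without this $\delta(s)^{-1}$ gain in the principal term, IVT would not be strong enough to force $\mu(\omega)\to 1$.
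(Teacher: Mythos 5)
Your overall strategy---find $\mu(\omega)$ by the intermediate value theorem applied to $F_\omega(\mu) = \langle\{1+(-\Delta+s_\mu)^{-1}V\}(T_\mu[\widetilde{\Phi}_\omega]-W), V\Lambda W\rangle$, and read off $\mu(\omega)\to 1$ from the resulting balance---is exactly the paper's, as is the decomposition $T_\mu[\widetilde{\Phi}_\omega]-W = T_\mu[\widetilde{\Phi}_\omega-W] + (T_\mu[W]-W)$ combined with the integral representation \eqref{20/9/25/11:45}.

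However, your asymptotic expansion of $F_\omega(\mu)$ is incorrect, and the way you propose to derive it would not work. First, the claimed principal factor $\delta(s_\mu)^{-1}$ should be $s_\mu\delta(s_\mu)^{-1}=\beta(s_\mu)$. Once one uses $V\Lambda W=\Delta\Lambda W$ and the resolvent identity, one has the key reduction $\langle\{1+(-\Delta+s)^{-1}V\}\zeta, V\Lambda W\rangle = \langle\zeta, sV(-\Delta+s)^{-1}\Lambda W\rangle$ (this is \eqref{18/11/28/10:12}); since $\|sV(-\Delta+s)^{-1}\Lambda W\|_{L^{2d/(d+2)}}\lesssim s\delta(s)^{-1}$ and $\|\zeta\|_{L^{2^*}}\lesssim 1$, the whole quantity is $O(s\delta(s)^{-1})$, which tends to $0$; your principal term $\log\mu\,\delta(s_\mu)^{-1}$ instead diverges as $\omega\to\infty$ for fixed $\mu\neq 1$, so the expansion cannot be right. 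Second, and more substantively, the lemmas you invoke for the constant are the wrong ones: Lemmas \ref{22/1/25/7:1}, \ref{22/2/20/16:32}, \ref{20/12/22/13:55} concern pairing $W$ (and $\{(-\Delta+s)^{-1}W\}(0)$) against $V\Lambda W$, which is what enters later in Proposition \ref{18/12/12/13:21}. Here the principal contribution is $-\tfrac{2}{d-2}(\mu-1)\,\langle\Lambda W, sV(-\Delta+s)^{-1}\Lambda W\rangle$, i.e.\ $\Lambda W$ paired against $V(-\Delta+s)^{-1}V\Lambda W$, and the sharp expansion needed is Lemma \ref{18/12/02/11:25} applied with $f=\Lambda W$: $\langle (-\Delta+s)^{-1}V\Lambda W,\Lambda W\rangle = -\mathscr{C}\,\mathcal{F}[V\Lambda W](0)\,\delta(s)^{-1}+O(1)$, which yields $\langle\Lambda W, sV(-\Delta+s)^{-1}\Lambda W\rangle = -\mathscr{C}\,\mathcal{F}[V\Lambda W](0)\,s\delta(s)^{-1}+O(s)$. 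Lemma \ref{22/1/25/7:1} carries an $O(\|f\|_{L^{2^*}})=O(1)$ error, which is far larger than the $O(s\delta(s)^{-1})$ signal you need to detect, so it cannot replace Lemma \ref{18/12/02/11:25} here. Finally, the $g$-terms in \eqref{22/3/10/16:12} do not enter this lemma at all---they first appear in the proof of Proposition \ref{18/12/12/13:21} via equation \eqref{18/11/25/15:51}---so mentioning Lemma \ref{18/12/19/01:00} here is a further sign that you have conflated the two arguments. You should replace the application of Lemmas \ref{22/1/25/7:1}, \ref{22/2/20/16:32}, \ref{20/12/22/13:55} with Lemma \ref{18/12/02/11:25}, and carry the factor $s_\mu\delta(s_\mu)^{-1}$ throughout.
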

\begin{proof}[Proof of Lemma \ref{18/11/27/11:20}]
Let $\omega>0$ and $\frac{1}{2}\le \mu \le \frac{3}{2}$, and define $s_{\omega}(\mu)$ and $\zeta_{\omega}(\mu)$ as  
\begin{equation}\label{21/2/19/15:37}
s_{\omega}(\mu)
:=\{ \mu M_{\omega} \}^{-\frac{4}{d-2}} \omega
,
\qquad 
\zeta_{\omega}(\mu)
:= T_{\mu M_{\omega}} 
 \big[ \Phi_{\omega} \big]  -W
=
T_{\mu}[\widetilde{\Phi}_{\omega}]-W
.
\end{equation}
By Lemma \ref{proposition:2.3}, we may assume that
\begin{equation}\label{20/12/13/16:27}
 s_{\omega}(\mu)
\le 1
. 
\end{equation} 
Observe from $V\Lambda W = \Delta \Lambda W$ (see \eqref{20/9/13/9:52}) that 
\begin{equation}\label{18/11/28/10:12}
\begin{split}
&
\langle 
\big\{1+ (-\Delta + s_{\omega}(\mu))^{-1}V \big\}
\zeta_{\omega}(\mu),
 V \Lambda W
\rangle
\\[6pt]
&=
\langle  \zeta_{\omega}(\mu), V\Lambda W \rangle
+
\langle  (-\Delta + s_{\omega}(\mu))^{-1} V \zeta_{\omega}(\mu),\, 
s_{\omega}(\mu) \Lambda W
- 
(-\Delta + s_{\omega}(\mu)) 
\Lambda W
\rangle
\\[6pt]
&=
\langle  \zeta_{\omega}(\mu),
 s_{\omega}(\mu) V (-\Delta + s_{\omega}(\mu))^{-1} \Lambda W
\rangle
.
\end{split}
\end{equation}
Write $\zeta_{\omega}(\mu)$ as   
\begin{equation}\label{18/11/28/09:20}
\zeta_{\omega}(\mu)
= 
T_{\mu}[ \widetilde{\Phi}_{\omega} - W ]
+ 
T_{\mu}[W] -W + \frac{2}{d-2} (\mu-1)  \Lambda W
- \frac{2}{d-2} (\mu-1)  \Lambda W
.
\end{equation}
Plugging \eqref{18/11/28/09:20} into the right-hand side of \eqref{18/11/28/10:12}, and using 
 \eqref{20/9/25/11:45} and the fundamental theorem of calculus, 
 we see that 
\begin{equation}\label{20/12/13/17:11}
\begin{split}
&
\langle 
\big\{ 1+ 
(-\Delta + s_{\omega}(\mu))^{-1}V 
\big\} 
\zeta_{\omega}(\mu),\, 
 V \Lambda W
\rangle
\\[6pt]
&=
\langle
T_{\mu} [ \widetilde{\Phi}_{\omega}-W ],  
\, s_{\omega}(\mu) V  (-\Delta +s_{\omega}(\mu) )^{-1} \Lambda W
\rangle 
\\[6pt]
&\quad 
+\frac{2}{d-2}
\langle
\int_{1}^{\mu} 
\int_{1}^{\nu}
\lambda^{-2} 
T_{\lambda}
\big[  2 \Lambda W
+
\frac{2}{d-2}
 x\cdot \nabla \Lambda W
\big]
\,d \lambda d\nu 
,\, s_{\omega}(\mu) V  
(-\Delta + s_{\omega}(\mu) )^{-1} 
\Lambda W
\rangle 
\\[6pt]
&\quad -
\frac{2}{d-2} (\mu-1)
\langle
 \Lambda W
,\, s_{\omega}(\mu) V  (-\Delta + s_{\omega}(\mu) )^{-1} \Lambda W
\rangle 
.
\end{split} 
\end{equation}
By the duality,  Lemma \ref{21/1/10/15:37} and $\delta(s_{\omega}(\mu))^{-1}\ge 1$ (see \eqref{19/01/27/16:58} and \eqref{20/12/13/16:27}), 
 we see  that 
\begin{equation}\label{20/12/13/17:31}
\begin{split}
&
\| s_{\omega}(\mu) 
V  (-\Delta + s_{\omega}(\mu) )^{-1} \Lambda W
\|_{L^{\frac{2d}{d+2}}} 
= 
\sup_{\|f\|_{L^{2^{*}}}=1}
| 
\langle 
s_{\omega}(\mu) 
 V(-\Delta +  s_{\omega}(\mu) )^{-1} \Lambda W
, f 
\rangle  
|
\\
&\le 
s_{\omega}(\mu)
\sup_{\|f\|_{L^{2^{*}}}=1}
\| (-\Delta + s_{\omega}(\mu) )^{-1} \Lambda W \|_{L^{\infty}}
\|Vf\|_{L^{1}}
\\
&\lesssim 
s_{\omega}(\mu)
\sup_{\|f\|_{L^{2^{*}}}=1}
\big\{ 
\delta( s_{\omega}(\mu))^{-1} 
\|V\|_{L^{\frac{2d}{d+2}}} 
\|f\|_{L^{2^{*}}} 
\big\} 
\lesssim 
s_{\omega}(\mu) 
\delta(s_{\omega}(\mu) )^{-1} 
.
\end{split} 
\end{equation}
Consider the first term on the right-hand side of \eqref{20/12/13/17:11}. 
 By H\"older's inequality, \eqref{20/12/13/17:31}, a computation involving the scaling, $\frac{1}{2}\le \mu \le \frac{3}{2}$ and Lemma \ref{theorem:3.1}, we see that  
\begin{equation}\label{20/12/14/9:1}
\begin{split}
&
\big|
\langle
T_{\mu} [ \widetilde{\Phi}_{\omega}-W]  
,
s_{\omega}(\mu) V  (-\Delta +s_{\omega}(\mu) )^{-1} \Lambda W
\rangle 
\big|
\\[6pt]
&\lesssim 
s_{\omega}(\mu) 
\delta(s_{\omega}(\mu) )^{-1}
\|\widetilde{\Phi}_{\omega}-W  \|_{L^{2^{*}}}
=
o_{\omega}(1) 
s_{\omega}(\mu) 
\delta(s_{\omega}(\mu) )^{-1} 
.
\end{split}
\end{equation}
Consider the second term on the right-hand side of \eqref{20/12/13/17:11}.
By H\"older's inequality, \eqref{20/12/13/17:31}, a computation involving the scaling and $\frac{1}{2}\le \mu \le \frac{3}{2}$, we see that  
\begin{equation} \label{20/12/14/9:2}
\begin{split}
&
\Big|
\langle
\int_{1}^{\mu} 
\int_{1}^{\nu}
\lambda^{-2} 
T_{\lambda} \big[ 2 \Lambda W
+
\frac{2}{d-2}
 x\cdot  \nabla \Lambda W
\big]
\,d \lambda d\nu 
,\, s_{\omega}(\mu) V(-\Delta +  s_{\omega}(\mu) )^{-1} 
\Lambda W
\rangle 
\Big|
\\[6pt]
&\lesssim  
s_{\omega}(\mu) 
\delta(s_{\omega}(\mu) )^{-1} 
\int_{1}^{\mu} 
\int_{1}^{\nu}
\lambda^{-2}
\| 2 \Lambda W 
+
\frac{2}{d-2}
x\cdot \nabla \Lambda W
\|_{L^{2^{*}}}
\, d\lambda d\nu 
\\[6pt]
&\lesssim 
s_{\omega}(\mu) 
\delta(s_{\omega}(\mu) )^{-1} 
\int_{1}^{\mu} 
\int_{1}^{\nu}
\lambda^{-2}
\, d\lambda d\nu
\lesssim 
s_{\omega}(\mu) 
\delta(s_{\omega}(\mu) )^{-1}
|\mu - 1 |^{2} 
.
\end{split}
\end{equation}
Consider the last  term on the right-hand side of \eqref{20/12/13/17:11}.  
 Note that  $\mathcal{F}[V\Lambda W](0) = - \frac{d+2}{d-2}\langle W^{\frac{4}{d-2}}, \Lambda W \rangle >0$ (see \eqref{20/9/13/9:52} and \eqref{20/12/22/14:18}). Then, Lemma \ref{18/12/02/11:25} shows that there exists $\mathscr{C}>0$ such that  
\begin{equation}\label{20/12/14/9:4}
\Big| 
\frac{2}{d-2}
\langle
 \Lambda W
,\, s_{\omega}(\mu) V  (-\Delta + s_{\omega}(\mu) )^{-1} \Lambda W
\rangle 
+ 
\mathscr{C} s_{\omega}(\mu) \delta(s_{\omega}(\mu))^{-1}   
\Big| 
\lesssim 
s_{\omega}(\mu)  
.
\end{equation}  
Putting \eqref{20/12/13/17:11}, \eqref{20/12/14/9:1}, \eqref{20/12/14/9:2} and \eqref{20/12/14/9:4} together, we find that  
\begin{equation}\label{20/12/14/9:5}
\begin{split}
&
\big|
\langle 
\big\{ 1 + (-\Delta + s_{\omega}(\mu))^{-1}V \big\}  
\zeta_{\omega}(\mu),
\, V \Lambda W
\rangle
-(\mu-1)
\mathscr{C} 
s_{\omega}(\mu) \delta(s_{\omega}(\mu))^{-1}
\big|
\\[6pt]
&\lesssim   
s_{\omega}(\mu) \delta(s_{\omega}(\mu))^{-1}
\big\{ 
o_{\omega}(1)
+
|\mu - 1 |^{2} 
+
\delta(s_{\omega}(\mu))
| \mu-1 |
\big\}
.
\end{split} 
\end{equation}
Furthermore, by \eqref{20/12/14/9:5} and the intermediate value theorem, 
 we find that there exists $\omega_{1}>0$ such that if $\omega>\omega_{1}$, then there exists $\mu(\omega) >0$ such that      
\begin{align}
&
\label{20/12/14/9:6}
\mu(\omega) =1+ o_{\omega}(1)
,
\\[6pt]
\label{21/2/20/11}
&  
\langle 
\big\{ 1+ (-\Delta +  s_{\omega}(\mu(\omega))   )^{-1} V \big\} 
\zeta_{\omega}(\mu(\omega)),
\, V \Lambda W
\rangle
=
0
.
\end{align}
Then, \eqref{21/2/20/11} corresponds to \eqref{18/11/27/07:03}.

It remains to prove \eqref{18/11/27/07:02}.
  By \eqref{18/11/28/09:20}, \eqref{20/9/25/11:45}, Lemma \ref{theorem:3.1}  
 and \eqref{20/12/14/9:6}, we see that   
\begin{equation}\label{20/12/14/9:7}
\begin{split}
\|\nabla \zeta_{\omega}\|_{L^{2}}
&\le 
\| \nabla T_{\mu(\omega)}[ \widetilde{\Phi}_{\omega} -W] \|_{L^{2}}
+
\| \nabla \{ T_{\mu(\omega)}[W] -W \}\|_{L^{2}}
\\[6pt]
&\lesssim 
\| \nabla \{ \widetilde{\Phi}_{\omega} -W \} \|_{L^{2}}
+
\int_{1}^{\mu(\omega)} 
\lambda^{-1}  \|\nabla \Lambda W \|_{L^{2}} 
\,d \lambda
=
o_{\omega}(1)
.
\end{split}
\end{equation}
Similarly,  we can verify that for any $\frac{d}{d-2}<r < \infty$, 
\begin{equation}\label{20/12/14/9:8}
\begin{split}
\|\zeta_{\omega}\|_{L^{r}}
&\lesssim
\mu(\omega)^{\frac{2^{*}-r}{r}}  
\| \widetilde{\Phi}_{\omega} -W  \|_{L^{r}}
+
\int_{1}^{\mu(\omega)} 
\lambda^{-1 + \frac{2^{*}-r}{r}}  \| \Lambda W \|_{L^{r}} 
\,d \lambda
=o_{\omega}(1)
.
\end{split}
\end{equation}
Thus, we have completed the proof of the lemma. 
\end{proof}


\subsection{Key properties of ground states}
\label{21/10/14/18:20}

The aim of this subsection is to give key properties of ground states to proving Theorem \ref{20/8/17/11:39} (see Proposition \ref{18/12/12/13:21}). 

Let us begin by introducing the notation used in this subsection: 
\begin{notation}\label{21/12/29/17:3}
\begin{enumerate}
\item 
We use $\omega_{1}$ to denote the constant given by Lemma \ref{18/11/27/11:20}. 
\item 
For $\omega>\omega_{1}$, we use $\mu(\omega)$ to denote the constant given by Lemma \ref{18/11/27/11:20},

\item 
We define 
\begin{align}
\label{20/8/10/17:20}
s(\omega) 
&:=\{ \mu(\omega)M_{\omega}\}^{-\frac{4}{d-2}}\omega , 
\\[6pt]
\label{21/12/31/9:57} 
\kappa(\omega)
&:= -\langle g(\mu(\omega)M_{\omega}W), \Lambda W \rangle
,
\\[6pt]
\label{21/12/29/17:13}
t(\omega)
&:=
\{ \mu(\omega)M_{\omega}\}^{-\frac{d+2}{d-2}}
\kappa(\omega)
,
\\[6pt]  
\label{21/12/29/15:50}
Q_{\omega}
&:=
T_{\mu(\omega)}[\widetilde{\Phi}_{\omega}]
,
\qquad 
\zeta_{\omega}
:=
Q_{\omega} -W
.
\end{align}
In addition, we define 
\begin{equation}\label{19/01/13/15:16}
\begin{split}
h_{\omega}
&:=
Q_{\omega}^{\frac{d+2}{d-2}}- W^{\frac{d+2}{d-2}} 
- 
\frac{d+2}{d-2}W^{\frac{4}{d-2}} \zeta_{\omega} 
\\[6pt]
&\quad + 
t(\omega) \kappa(\omega)^{-1} 
\big\{ g(\mu(\omega) M_{\omega}Q_{\omega}) - g( \mu(\omega) M_{\omega} W) 
\big\}
. 
\end{split}
\end{equation} 
\end{enumerate} 
\end{notation}
By Lemma \ref{18/09/05/01:05} and $\mu(\omega)=1+o_{\omega}(1)$, we see that 
\begin{equation}\label{20/12/16/16:33}
\|Q_{\omega}\|_{L^{r}} \sim \|W\|_{L^{r}} \sim 1
\quad 
\mbox{for all $\frac{d}{d-2}< r \le \infty$}, 
\end{equation}
where the implicit constants may depend on $r$. 

Observe from \eqref{22/3/10/16:12} that  
$Q_{\omega}$ obeys  
\begin{equation}\label{21/12/29/15:43}
-
\Delta Q_{\omega}
+
s(\omega) 
Q_{\omega}
-
Q_{\omega}^{\frac{d+2}{d-2}}
-
t(\omega) 
\kappa(\omega)^{-1}
g(\mu(\omega)M_{\omega} Q_{\omega})
=
0
.
\end{equation}
Furthermore, it follows from  \eqref{21/12/29/15:43} and $W$ being a solution to \eqref{21/10/16/11:41} that  
\begin{equation}\label{18/11/25/15:51}
(-\Delta+s(\omega)+ V ) \zeta_{\omega}
=
-s(\omega) W 
+ 
t(\omega) \kappa(\omega)^{-1} 
g(\mu(\omega) M_{\omega}W) 
+h_{\omega}
.
\end{equation}


The following lemma gives us the limiting profile of $\kappa(\omega)$: 

\begin{lemma}\label{18/11/24/17:55}
Assume $d=3,4$. Under Assumption \ref{21/12/23/14:45} and Notation \ref{21/12/29/17:3}, the following holds:
\begin{equation}\label{21/12/31/10:31}
\lim_{\omega \to \infty}
\frac{\kappa(\omega) }{\{\mu(\omega)M_{\omega}\}^{p_{2}}} 
=
-\frac{C_{2}}{p_{2}}
\langle W^{p_{2}}, \Lambda W \rangle
.
\end{equation}
\end{lemma}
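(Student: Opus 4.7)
The approach is to factor out the scale $(\mu(\omega) M_\omega)^{p_2}$ and pass to the limit inside the integral defining $\kappa(\omega)$ via dominated convergence. By the definition of $\kappa(\omega)$,
\begin{equation*}
\frac{\kappa(\omega)}{\{\mu(\omega)M_\omega\}^{p_2}}
=
-\int_{\mathbb{R}^d} \frac{g(\mu(\omega) M_\omega W(x))}{\{\mu(\omega)M_\omega\}^{p_2}} \Lambda W(x)\, dx,
\end{equation*}
so it suffices to identify the pointwise limit of the integrand and dominate it by an integrable function independent of $\omega$.

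For the pointwise limit, fix $x \in \mathbb{R}^d$. Since $W(x) > 0$ and $\mu(\omega) M_\omega \to \infty$ (by Lemma \ref{proposition:2.3} together with $\mu(\omega) = 1 + o_\omega(1)$ from Lemma \ref{18/11/27/11:20}), we have $\mu(\omega) M_\omega W(x) \to \infty$. Applying \eqref{21/12/25/15:11} with $t = \mu(\omega) M_\omega W(x)$ and multiplying by $W(x)^{p_2}$ gives
\begin{equation*}
\frac{g(\mu(\omega) M_\omega W(x))}{\{\mu(\omega) M_\omega\}^{p_2}}
\longrightarrow
\frac{C_2}{p_2}\, W(x)^{p_2}
\quad\text{pointwise on } \mathbb{R}^d.
\end{equation*}
For a uniform dominant, the bound $|g(t)| \lesssim t^{p_1} + t^{p_2}$ from \eqref{21/12/24/13:39} together with $p_1 < p_2$ and $\mu(\omega) M_\omega \gtrsim 1$ yields, for all sufficiently large $\omega$,
\begin{equation*}
\frac{|g(\mu(\omega) M_\omega W)|}{\{\mu(\omega)M_\omega\}^{p_2}}
\lesssim \{\mu(\omega)M_\omega\}^{p_1 - p_2} W^{p_1} + W^{p_2}
\lesssim W^{p_1} + W^{p_2}.
\end{equation*}

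It remains to verify that $(W^{p_1} + W^{p_2})|\Lambda W| \in L^1(\mathbb{R}^d)$. From the explicit formulas $W(x) = (1+|x|^2/(d(d-2)))^{-(d-2)/2}$ and $\Lambda W(x) = \tfrac{d-2}{2}W(x) + x \cdot \nabla W(x)$ one checks directly that $W(x) \sim (1+|x|)^{-(d-2)}$ and $|\Lambda W(x)| \lesssim (1+|x|)^{-(d-2)}$; hence $W^{p_j}|\Lambda W|$ decays like $|x|^{-(d-2)(p_j+1)}$ at infinity and is bounded near the origin. Integrability at infinity reduces to $(d-2)(p_j+1) > d$, i.e.\ $p_j > 2/(d-2)$, which holds for $j=1,2$ by Assumption \ref{21/12/23/14:45}(ii) (since $p_1 > \max\{2/(d-2),1\}$ and $p_2 > p_1$). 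Dominated convergence then gives
\begin{equation*}
\int_{\mathbb{R}^d} \frac{g(\mu(\omega) M_\omega W(x))}{\{\mu(\omega)M_\omega\}^{p_2}} \Lambda W(x)\, dx
\longrightarrow
\frac{C_2}{p_2}\, \langle W^{p_2}, \Lambda W \rangle,
\end{equation*}
and multiplication by $-1$ yields \eqref{21/12/31/10:31}. The only delicate point is the borderline decay of $W^{p_1}|\Lambda W|$ at infinity, which is precisely why the sharper lower bound $p_1 > 2/(d-2)$ (stronger than $p_1 > 1$ when $d=3$) is imposed in Assumption \ref{21/12/23/14:45}.
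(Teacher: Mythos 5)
Your proof is correct and takes essentially the same route as the paper: factor out $\{\mu(\omega)M_\omega\}^{p_2}$, obtain the pointwise limit of $g(\mu(\omega)M_\omega W)/\{\mu(\omega)M_\omega\}^{p_2}$ from the asymptotic condition \eqref{21/12/25/15:11} and $\mu(\omega)M_\omega\to\infty$, dominate by $(W^{p_1}+W^{p_2})|\Lambda W|\in L^1$ via \eqref{21/12/24/13:39} and $p_1>2/(d-2)$, and conclude by dominated convergence. The extra verification you give of the integrability of the dominating function is the same point the paper invokes implicitly.
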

\begin{remark}\label{22/1/1/9:35}
Note that $\langle W^{p_{2}}, \Lambda W \rangle<0$ (see \eqref{20/12/22/14:18}). Furthermore, \eqref{21/12/31/10:31} implies that 
\begin{equation}\label{22/1/26/10:35}
\kappa(\omega) \sim M_{\omega}^{p_{2}}
. 
\end{equation}
\end{remark}


Now, we state key properties of ground states to proving the nondegeneracy (Theorem \ref{20/8/17/11:39}):
\begin{proposition}\label{18/12/12/13:21}
Assume $d=3,4$. Under Assumption \ref{21/12/23/14:45} and Notation \ref{21/12/29/17:3}, the following hold:  
\begin{enumerate}
\item 
\begin{equation}\label{21/10/17/8:59}
\lim_{\omega \to \infty}
s(\omega)=0,
\qquad  
\lim_{\omega \to \infty} t(\omega)=0
.
\end{equation}

\item 
\begin{equation}\label{20/12/26/17:57}
\|\zeta_{\omega}\|_{L^{r}} 
\lesssim 
s(\omega)^{\frac{d-2}{2}-\frac{d}{2r}-\varepsilon}
\quad 
\mbox{for all $\omega>\omega_{1}$, $\frac{d}{d-2}<r <\infty$ and $\varepsilon>0$}
,
\end{equation}
where the implicit constant may depend on $r$ and $\varepsilon$ (independent of $\omega$).


\item 
\begin{equation}\label{18/12/10/22:44}
|
A_{1} \beta( s(\omega))  - t(\omega) 
| 
=  
o_{\omega}(1) \beta(s(\omega))
,
\end{equation}
where $A_{1}$ is the same constant as in Lemma \ref{20/12/22/13:55}
.
\end{enumerate} 
\end{proposition}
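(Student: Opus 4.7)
I prove the three assertions in the listed order because (3) will require the quantitative bound from (2) to dispose of the $h_{\omega}$ contribution.

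Part (1) is bookkeeping. Since $\mu(\omega)=1+o_{\omega}(1)$ by Lemma \ref{18/11/27/11:20}, the convergence $s(\omega)\to 0$ reduces to \eqref{21/12/25/15:21}. For $t(\omega)$, combine the formula $t(\omega)=\{\mu(\omega)M_{\omega}\}^{-(d+2)/(d-2)}\kappa(\omega)$ with Lemma \ref{18/11/24/17:55} (equivalently $\kappa(\omega)\sim M_{\omega}^{p_{2}}$, Remark \ref{22/1/1/9:35}) to conclude $t(\omega)\sim M_{\omega}^{p_{2}-(d+2)/(d-2)}\to 0$, using $p_{2}<(d+2)/(d-2)$ and Lemma \ref{proposition:2.3}.

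For (2), set $R_{s}:=(-\Delta+s(\omega))^{-1}$ and $F:=-s(\omega)W+t(\omega)\kappa(\omega)^{-1}g(\mu(\omega)M_{\omega}W)+h_{\omega}$. By \eqref{18/11/25/15:51},
\[
\{1+R_{s}V\}\zeta_{\omega}=R_{s}F.
\]
Lemma \ref{18/11/27/11:20} then gives $\langle R_{s}F,V\Lambda W\rangle=0$, which is precisely the hypothesis of Proposition \ref{18/11/17/07:17}(ii), so $\|\zeta_{\omega}\|_{L^{r}}\lesssim\|R_{s}F\|_{L^{r}}$. I then bound the three pieces of $R_{s}F$ separately: for $R_{s}(-s(\omega)W)$ I invoke Lemma \ref{18/11/05/10:29} with exponent $q$ just above $d/(d-2)$, which yields exactly $s(\omega)^{(d-2)/2-d/(2r)-\varepsilon}$; for $R_{s}(t\kappa^{-1}g(\mu MW))$ I use the pointwise bound \eqref{21/12/24/13:39}, the identity $t\kappa^{-1}=\{\mu M\}^{-(d+2)/(d-2)}$, and Lemma \ref{18/11/05/10:29} or \ref{18/11/23/17:17} to produce a prefactor $M_{\omega}^{p_{i}-(d+2)/(d-2)}$ that (via $M_{\omega}^{-4/(d-2)}\sim s(\omega)/\omega$) is of strictly smaller order than the required power of $s(\omega)$; the remainder $R_{s}h_{\omega}$ is handled by a bootstrap, since $h_{\omega}$ is a Taylor remainder of $z\mapsto z^{(d+2)/(d-2)}$ and of the $g$-nonlinearity around $W$ and is therefore at least quadratic in $\zeta_{\omega}$ with polynomial prefactors in $W$ and $\mu M Q_{\omega}$. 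Starting from the qualitative decay $\|\zeta_{\omega}\|_{L^{r}}\to 0$ supplied by Lemma \ref{18/11/27/11:20}, I iterate H\"older and Lemmas \ref{18/11/05/10:29}--\ref{18/11/23/17:17} a finite number of times to recover the claimed quantitative rate.

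For (3), I pair both sides of $\{1+R_{s}V\}\zeta_{\omega}=R_{s}F$ with $V\Lambda W$; the left-hand side vanishes by \eqref{18/11/27/07:03}. Lemma \ref{20/12/22/13:55} converts $-s(\omega)\langle R_{s}W,V\Lambda W\rangle$ into the principal term $-A_{1}\beta(s(\omega))+o(\beta(s(\omega)))$, while Lemma \ref{18/12/19/01:00} together with $\kappa(\omega)=-\langle g(\mu MW),\Lambda W\rangle$ converts $t\kappa^{-1}\langle R_{s}g(\mu MW),V\Lambda W\rangle$ into $+t(\omega)$ modulo an error of the form $t\kappa^{-1}\{(\mu M)^{p_{1}}+(\mu M)^{p_{2}}\}s(\omega)^{d/(2r)-1}$ for a free parameter $r\in(\max\{1,\tfrac{d}{(d-2)p_{1}}\},\tfrac{d}{2})$; choosing $r$ close to $d/2$ renders this $o(\beta(s(\omega)))$. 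The contribution $\langle R_{s}h_{\omega},V\Lambda W\rangle$ is controlled by duality against $V\Lambda W\in L^{q'}$ (for suitable $q'$) together with the $L^{r}$-bound already established in (2), which is likewise $o(\beta(s(\omega)))$. Collecting these estimates and rearranging yields \eqref{18/12/10/22:44}. The principal obstacle is closing the bootstrap of $R_{s}h_{\omega}$ in step (2): because $W\notin L^{2}(\mathbb{R}^{d})$ for $d=3,4$, the scale-critical direction $\Lambda W$ is only marginally integrable, and the Taylor remainder mixes quadratic $\zeta_{\omega}$ contributions with high powers of $W$ at both small and large scales, so ensuring that the bootstrap closes over a range of $r$ wide enough to feed back usefully into (3) is the delicate part of the whole argument.
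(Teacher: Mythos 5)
Your overall framework is sound and matches the paper's: expand via $\{1+R_{s}V\}\zeta_{\omega}=R_{s}F$, apply Proposition~\ref{18/11/17/07:17}(ii) thanks to the orthogonality condition \eqref{18/11/27/07:03}, split $F$ into the linear $W$-piece, the $g$-piece, and the Taylor remainder $h_{\omega}$, and absorb the $h_{\omega}$ contribution by a self-improving estimate. The treatment of (1) and of (3) (Lemmas~\ref{20/12/22/13:55} and~\ref{18/12/19/01:00}, then duality against $V\Lambda W$ for $\mathscr{H}(\omega)$) is also consistent with the paper.

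However, there is a genuine gap in your argument for (2), and it is precisely the step you treat as routine. For the middle term you get, after the resolvent estimate with $q_{1}=\tfrac{d}{2}$,
\begin{equation*}
t(\omega)\kappa(\omega)^{-1}\,\|R_{s}\,g(\mu M_{\omega}W)\|_{L^{r}}
\ \lesssim\ M_{\omega}^{\,p_{2}-\frac{d+2}{d-2}}\,s(\omega)^{-\frac{d}{2r}}
\ \sim\ t(\omega)\,s(\omega)^{-\frac{d}{2r}},
\end{equation*}
and so the bound $\|\zeta_{\omega}\|_{L^{r}}\lesssim s(\omega)^{\frac{d-2}{2}-\frac{d}{2r}-\varepsilon}$ requires the separate estimate $t(\omega)\lesssim s(\omega)^{\frac{d-2}{2}-\varepsilon}$. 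You claim this is ``of strictly smaller order'' via $M_{\omega}^{-4/(d-2)}\sim s(\omega)/\omega$, but that relation is just the definition of $s(\omega)$ and cannot, by itself, eliminate $\omega$: writing $p_{2}-\frac{d+2}{d-2}=-\theta$ with $0<\theta<2$ gives $M_{\omega}^{-\theta}=(s(\omega)/\omega)^{(d-2)\theta/4}=s(\omega)^{a}\omega^{-a}$ with $0<a<\frac{d-2}{2}$, and asking $s(\omega)^{a}\omega^{-a}\lesssim s(\omega)^{\frac{d-2}{2}-\varepsilon}$ amounts to $\omega^{-a}\lesssim s(\omega)^{\frac{d-2}{2}-a-\varepsilon}$, which is a nontrivial comparison between two independent small quantities (note that the crude bound $\omega^{-a}\lesssim 1$ goes the wrong way, since $a<\frac{d-2}{2}-\varepsilon$). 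This is exactly what the paper's Lemma~\ref{22/2/24/8:36} supplies, by combining a Pohozaev-type lower bound $\|Q_{\omega}\|_{L^{2}}^{2}\gtrsim s(\omega)^{-1}t(\omega)$ coming from \eqref{18/11/24/17:41} and \eqref{22/3/8/11:29} with a resolvent upper bound $\|Q_{\omega}\|_{L^{2}}^{2}\lesssim s(\omega)^{\frac{d-2}{2}-1-\varepsilon}$. Without this $L^{2}$-estimate on $Q_{\omega}$ (or some equivalent input tying $M_{\omega}$'s growth to $\omega$), part (2) does not close, and since (3) uses (2), it inherits the gap. Incidentally, the aspect you flag as ``the delicate part'' (the marginal integrability of $\Lambda W$) is a real background difficulty of the problem, but it is not where the argument for this Proposition hangs together; the missing Pohozaev step is.
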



\subsection{Proofs of Lemma \ref{18/11/24/17:55} and Proposition \ref{18/12/12/13:21}}\label{22/2/24/8:46}

\begin{proof}[Proof of Lemma \ref{18/11/24/17:55}] 
By \eqref{21/12/25/15:11}, \eqref{21/10/14/15:22} and $\mu(\omega)=1+o_{\omega}(1)$, we see that 
\begin{equation}\label{22/1/9/11:27}
\lim_{\omega\to \infty}
\Big| 
\frac{g(\mu(\omega)M_{\omega}W)}{\{\mu(\omega)M_{\omega}W\}^{p_{2}}}-\frac{C_{2}}{p_{2}}
\Big|  W^{p_{2}} \Lambda W 
=0
\qquad 
\mbox{almost everywhere in $\mathbb{R}^{d}$}
.
\end{equation}
Furthermore, by \eqref{21/12/24/13:39} and $\frac{2}{d-2}<p_{1} \le p_{2}$, we see that 
\begin{equation}\label{22/1/9/11:37}
\Big| 
\frac{g(\mu(\omega)M_{\omega}W)}{\{\mu(\omega)M_{\omega}W\}^{p_{2}}}-\frac{C_{2}}{p_{2}}
\Big|  W^{p_{2}} \Lambda W 
\lesssim
\{ W^{p_{1}}+W^{p_{2}}\}\Lambda W \in L^{1}(\mathbb{R}^{d})
.
\end{equation}
Then, the Lebesgue's dominated convergence theorem together with \eqref{22/1/9/11:27} and \eqref{22/1/9/11:37} shows that 
\begin{equation}\label{21/12/30/15:39}
\begin{split}
\Big|
\frac{\kappa(\omega)}{\{\mu(\omega)M_{\omega}\}^{p_{2}}}
+
\frac{C_{2}}{p_{2}}
\langle W^{p_{2}}, \Lambda W \rangle
\Big|
&\le 
\int_{\mathbb{R}^{d}} 
\Big| 
\frac{g(\mu(\omega)M_{\omega}W)}{\{\mu(\omega)M_{\omega}W\}^{p_{2}}}-\frac{C_{2}}{p_{2}}
\Big|  
W^{p_{2}} \Lambda W 
\\[6pt]
&\to 0 
\qquad 
\mbox{as $\omega \to \infty$}
. 
\end{split}  
\end{equation}
Thus, we have proved the lemma. 
\end{proof}


In order to prove Proposition \ref{18/12/12/13:21}, we need the following lemma:\begin{lemma}\label{22/2/24/8:36}
Assume $d=3,4$. Under Assumption \ref{21/12/23/14:45} and Notation \ref{21/12/29/17:3}, the following holds for all sufficiently large $\omega>\omega_{1}$:
\begin{equation}\label{18/12/09/22:07}
t(\omega) 
\lesssim 
s(\omega)^{\frac{d-2}{2}-\varepsilon}
\qquad 
\mbox{for all $0< \varepsilon <1$}
,
\end{equation}
where the implicit constant may depend on $\varepsilon$.
\end{lemma}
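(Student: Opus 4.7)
The strategy is to derive a scalar identity for $t(\omega)$ by applying $(-\Delta+s(\omega))^{-1}$ to \eqref{18/11/25/15:51}, pairing against $V\Lambda W$, and invoking the orthogonality \eqref{18/11/27/07:03} to annihilate the left-hand side. This yields
\begin{equation*}
s(\omega)\langle (-\Delta+s(\omega))^{-1}W, V\Lambda W\rangle
= t(\omega)\kappa(\omega)^{-1}\langle(-\Delta+s(\omega))^{-1}g(\mu(\omega)M_\omega W), V\Lambda W\rangle + \langle(-\Delta+s(\omega))^{-1}h_\omega, V\Lambda W\rangle.
\end{equation*}
By Lemma \ref{20/12/22/13:55} the left-hand side equals $A_{1}\beta(s(\omega))(1+o_\omega(1))$, while Lemma \ref{18/12/19/01:00} (with $r$ chosen close to $d/2$ from below, so that $s(\omega)^{d/(2r)-1}=o_\omega(1)$), together with $\kappa(\omega)\sim M_\omega^{p_2}$ from Lemma \ref{18/11/24/17:55}, reduces the first right-hand-side term to $t(\omega)(1+o_\omega(1))$.

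To control the $h_\omega$-pairing I would split $h_\omega=h_\omega^{(1)}+t(\omega)\kappa(\omega)^{-1}h_\omega^{(2)}$ into its critical and subcritical pieces. Taylor's theorem and the mean value theorem, combined with Assumption \ref{21/12/23/14:45}(2), yield pointwise bounds of the form $|h_\omega^{(1)}|\lesssim W^{(6-d)/(d-2)}\zeta_\omega^{2}+|\zeta_\omega|^{(d+2)/(d-2)}$ and $|h_\omega^{(2)}|\lesssim M_\omega^{p_2}(W^{p_2-1}+|\zeta_\omega|^{p_2-1})|\zeta_\omega|+M_\omega^{p_1}(W^{p_1-1}+|\zeta_\omega|^{p_1-1})|\zeta_\omega|$. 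Moving the resolvent onto $V\Lambda W$ by self-adjointness, and using the identity $(-\Delta+s)^{-1}V\Lambda W=s(-\Delta+s)^{-1}\Lambda W-\Lambda W$ together with Lemma \ref{21/1/10/15:37} to see $\|(-\Delta+s(\omega))^{-1}V\Lambda W\|_{L^\infty}\lesssim 1$, H\"older's inequality reduces matters to controlling $\|h_\omega^{(1)}\|_{L^1}$ and $M_\omega^{-p_2}\|h_\omega^{(2)}\|_{L^1}$, both of which are dominated by products of $\|\zeta_\omega\|_{L^r}$ (for appropriate $r>d/(d-2)$) with fixed Lebesgue norms of $W^{p-1}$; these vanish as $\omega\to\infty$ by Lemma \ref{18/11/27/11:20}.

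To upgrade the resulting qualitative $o_\omega(1)$ smallness into the required polynomial-in-$s(\omega)$ rate, I would close a bootstrap. Proposition \ref{18/11/17/07:17}(ii), whose orthogonality hypothesis is exactly \eqref{18/11/27/07:03}, gives $\|\zeta_\omega\|_{L^r}\lesssim \|F\|_{L^r}$ for $F$ the explicit right-hand side of \eqref{18/11/25/15:51} after inverting the free resolvent; estimating $\|F\|_{L^r}$ via Lemmas \ref{18/11/23/17:17} and \ref{18/11/05/10:29} yields $\|\zeta_\omega\|_{L^r}\lesssim \beta(s(\omega))+t(\omega)$ modulo quadratic corrections absorbed by smallness. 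Plugging back into the main identity produces the closed inequality $t(\omega)\lesssim \beta(s(\omega))$, and the claim $t(\omega)\lesssim_\varepsilon s(\omega)^{(d-2)/2-\varepsilon}$ follows from the elementary comparison $\beta(s)=s\delta(s)^{-1}\lesssim_\varepsilon s^{(d-2)/2-\varepsilon}$ on $(0,1)$ (immediate for $d=3$, where $\beta(s)=s^{1/2}$; for $d=4$ use $\log(1+s^{-1})\lesssim_\varepsilon s^{-\varepsilon}$). The main obstacle will be this bootstrap step: the control of $h_\omega$ requires rates on $\|\zeta_\omega\|_{L^r}$, which themselves come from the equation that already contains $h_\omega$, and disentangling the resulting self-reference cleanly is the technical heart of the argument.
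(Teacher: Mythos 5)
Your approach is genuinely different from the paper's, and it has a real gap precisely at the point you yourself flag as the ``technical heart.'' The paper's proof of this lemma is much more elementary: it does \emph{not} pair against $V\Lambda W$ at all. Instead it combines the Pohozaev identity \eqref{18/11/24/17:41} with the structural hypothesis \eqref{22/3/8/11:29} to obtain the lower bound
\begin{equation*}
\|Q_{\omega}\|_{L^{2}}^{2}
\;\gtrsim\;
s(\omega)^{-1}\{\mu(\omega)M_{\omega}\}^{-2^{*}+p_{2}+1}\|Q_{\omega}\|_{L^{p_{2}+1}}^{p_{2}+1}
\;\gtrsim\;
s(\omega)^{-1}\,t(\omega),
\end{equation*}
and then proves $\|Q_{\omega}\|_{L^{2}}^{2}\lesssim s(\omega)^{\frac{d-2}{2}-1-\varepsilon}$ directly by writing $\|Q_{\omega}\|_{L^{2}}^{2}=\langle(-\Delta+s(\omega))^{-1}(-\Delta+s(\omega))Q_{\omega},Q_{\omega}\rangle$, inserting the equation \eqref{21/12/29/15:43}, and estimating with Cauchy--Schwarz and Lemma \ref{18/11/05/10:29}. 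Crucially, the lower bound is fed \emph{back} into the estimate of the $g$-term, so the argument is completely self-contained and produces a rate on $t(\omega)$ with no reference to $\zeta_{\omega}$ whatsoever.

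Your strategy --- pairing $(-\Delta+s(\omega))^{-1}$ of \eqref{18/11/25/15:51} against $V\Lambda W$ and killing the left-hand side via the orthogonality \eqref{18/11/27/07:03} --- is exactly what the paper does later, to obtain the sharp relation \eqref{18/12/10/22:44} in Proposition \ref{18/12/12/13:21}. But in the paper that step is downstream: it uses the decay rate \eqref{20/12/26/17:57} for $\zeta_{\omega}$, which in turn is proved using the present Lemma \ref{22/2/24/8:36}. Attempting to run the pairing argument first, as you propose, creates a circularity that your bootstrap does not close. Concretely, from \eqref{18/11/25/16:45} one gets
$\|\zeta_{\omega}\|_{L^{r}}\lesssim s(\omega)^{\frac{d-2}{2}-\frac{d}{2r}} + t(\omega)\,s(\omega)^{-\frac{d}{2r}} + (\text{higher order})$,
and when one then plugs this into the quadratic-in-$\zeta_{\omega}$ part of $h_{\omega}$, the $\mathscr{H}(\omega)$-pairing picks up terms of size $t(\omega)^{2}s(\omega)^{-c}$ for some $c>0$. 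Absorbing these into the $t(\omega)$ on the other side of the identity requires an a priori bound of the form $t(\omega)\lesssim s(\omega)^{c}$, which is the very statement you are trying to prove; qualitative smallness ($t(\omega)\to 0$, $\|\zeta_{\omega}\|\to 0$) is not enough because the negative power of $s(\omega)$ can dominate. The Pohozaev step supplies precisely this missing a priori rate, and without it (or some substitute) the bootstrap is not well-founded. Your write-up correctly identifies this as the obstacle, but the suggestion that it can be ``disentangled cleanly'' is not substantiated and, as far as I can see, is not true within the proposed framework.
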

\begin{remark}\label{22/2/24/8:33}
The estimate \eqref{18/12/09/22:07} is improved to  
$t(\omega)  \sim \beta( s(\omega))$ in \eqref{18/12/10/22:44} of Proposition \ref{18/12/12/13:21}. 
\end{remark}


\begin{proof}[Proof of Lemma \ref{22/2/24/8:36}]
By \eqref{18/11/24/17:41}, \eqref{22/3/8/11:29}, \eqref{20/12/16/16:33} and 
  $t(\omega) \sim M_{\omega}^{-\frac{d+2}{d-2}+p_{2}}$ (see \eqref{21/12/29/17:13} and \eqref{22/1/26/10:35}), we see that  if $\omega$ is sufficiently large (hence Lemma \ref{proposition:2.3} implies $\mu(\omega)M_{\omega}\gg 1$), then 
\begin{equation}\label{20/12/16/16:48}
\begin{split} 
\|Q_{\omega}\|_{L^{2}}^{2}
&=
s(\omega)^{-1}
\{\mu(\omega)M_{\omega}\}^{-2^{*}} 
d \int_{\mathbb{R}^{d}}
\Big\{ G(\mu(\omega)M_{\omega} Q_{\omega}) 
- 
\frac{1}{2^{*}} g(\mu(\omega)M_{\omega} Q_{\omega}) 
\mu(\omega)M_{\omega} Q_{\omega}
\Big\} 
\\[6pt]
&\ge  
 s(\omega)^{-1}
\{\mu(\omega)M_{\omega}\}^{-2^{*}} 
dC_{3}
\{\mu(\omega)M_{\omega}\}^{p_{2}+1}
\|Q_{\omega} \|_{L^{p_{2}+1}}^{p_{2}+1}
\\[6pt]
&\gtrsim
s(\omega)^{-1} M_{\omega}^{-\frac{d+2}{d-2}}
M_{\omega}^{p_{2}}
\sim 
s(\omega)^{-1} t(\omega)
.
\end{split}
\end{equation}
Hence, the claim \eqref{18/12/09/22:07} follows from \eqref{20/12/16/16:48} and the following estimate:
\begin{equation}\label{18/11/24/17:57}
\|Q_{\omega}  \|_{L^{2}}^{2}
\lesssim 
s(\omega)^{\frac{d-2}{2}-1-\varepsilon}
\quad 
\mbox{for all $0< \varepsilon <1$}
.
\end{equation}

We shall prove \eqref{18/11/24/17:57}. Let $0< \varepsilon <1$. We allow the implicit constants to depend on $\varepsilon$. 
 Observe from \eqref{21/12/29/15:43} that 
\begin{equation}\label{20/12/18/9:47}
\begin{split}
\|Q_{\omega} \|_{L^{2}}^{2}
&=
\langle
(-\Delta +s(\omega))^{-1} 
(-\Delta +s(\omega))Q_{\omega} ,Q_{\omega} 
\rangle 
\\[6pt]
&=\langle 
(-\Delta +s(\omega))^{-1}
Q_{\omega}^{\frac{d+2}{d-2}}, 
Q_{\omega} 
\rangle 
\\[6pt]
&\quad +
t(\omega)
\langle 
(-\Delta + s(\omega))^{-1} 
\kappa(\omega)^{-1} 
g(\mu(\omega) M_{\omega} Q_{\omega}), Q_{\omega} 
\rangle 
.
\end{split}
\end{equation}

Consider the first term on the right-hand side of \eqref{20/12/18/9:47}. 
 By Cauchy-Schwarz inequality, Lemma \ref{18/11/05/10:29}, and \eqref{20/12/16/16:33}, we see that  
\begin{equation}\label{18/12/26/11:39}
\begin{split}
&\big|
\langle 
(-\Delta +s(\omega))^{-1}
Q_{\omega}^{\frac{d+2}{d-2}}, Q_{\omega} 
\rangle 
\big| 
\le 
\| (-\Delta+s(\omega))^{-1} 
Q_{\omega}^{\frac{d+2}{d-2}}
\|_{L^{2}}
\|  Q_{\omega} \|_{L^{2}}
\\[6pt]
&\lesssim 
s(\omega)^{\frac{d}{4}-1-\frac{\varepsilon}{2}} 
\|
Q_{\omega}^{\frac{d+2}{d-2}} 
\|_{L^{\frac{d}{d-\varepsilon}}}
\|Q_{\omega}   \|_{L^{2}}
\lesssim 
s(\omega)^{\frac{1}{2}(\frac{d-2}{2}-1-\varepsilon)} 
\|Q_{\omega}   \|_{L^{2}}
.
\end{split}  
\end{equation} 
Move on to the second term on the right-hand side of \eqref{20/12/18/9:47}. 
Fix $0< \varepsilon_{2}< \frac{(d-2)p_{1}-2}{d}$, so that $\frac{p_{1}+1}{1+\varepsilon_{2}}>\frac{d}{d-2}$. 
 Then, by \eqref{22/1/26/10:35}, H\"older's inequality,  
Lemma \ref{18/11/05/10:29},  \eqref{21/12/24/13:39},  \eqref{20/12/16/16:33}
 and \eqref{20/12/16/16:48}, we see that 
\begin{equation}\label{18/11/25/09:01}
\begin{split}
&
t(\omega)
\big| 
\langle 
(-\Delta + s(\omega))^{-1} 
\kappa(\omega)^{-1} 
g(\mu(\omega) M_{\omega} Q_{\omega}), Q_{\omega} 
\rangle 
\big| 
\\[6pt]
&\lesssim  
t(\omega)
M_{\omega}^{-p_{2}}
\|
(-\Delta + s(\omega))^{-1} 
g(\mu(\omega) M_{\omega} Q_{\omega})
\|_{L^{\frac{p_{1}+1}{p_{1}-\varepsilon_{2}}}}  
\| Q_{\omega} \|_{L^{\frac{p_{1}+1}{1+\varepsilon_{2}}}}
\\[6pt]
&\lesssim 
t(\omega)
M_{\omega}^{-p_{2}}
s(\omega)^{\frac{d\varepsilon_{2}}{2(p_{1}+1)}-1}  
\|
g (\mu(\omega)M_{\omega} Q_{\omega})
\|_{L^{\frac{p_{1}+1}{p_{1}}}}
\\[6pt]
&\lesssim 
t(\omega)
M_{\omega}^{-p_{2}}
s(\omega)^{\frac{d\varepsilon_{2}}{2(p_{1}+1)}-1}
M_{\omega}^{p_{2}}
\lesssim 
s(\omega)^{\frac{d\varepsilon_{2}}{2(p_{1}+1)}}
\|Q_{\omega} \|_{L^{2}}^{2}
=
o_{\omega}(1) \|Q_{\omega} \|_{L^{2}}^{2}
.
\end{split}  
\end{equation}
Putting  \eqref{20/12/18/9:47}, \eqref{18/12/26/11:39} and \eqref{18/11/25/09:01} together, we find that the claim \eqref{18/11/24/17:57} is true.   Thus, we have proved the lemma. 
\end{proof}


Now, we are in a position to prove  Proposition \ref{18/12/12/13:21}: 
\begin{proof}[Proof of Proposition \ref{18/12/12/13:21}]
We use the symbols in Notation \ref{21/12/29/17:3}. 


The claim \eqref{21/10/17/8:59} follows immediately from Lemma \ref{proposition:2.3} and \eqref{22/1/26/10:35}.


We shall prove \eqref{20/12/26/17:57}. Let $\omega>\omega_{1}$, $\frac{d}{d-2}<r <\infty$ and $0< \varepsilon <1$. We allow the implicit constants to depend on $r$ and $\varepsilon$. 
 By \eqref{18/11/11/15:50} in Proposition \ref{18/11/17/07:17} together with \eqref{18/11/27/07:03} in Lemma \ref{18/11/27/11:20}, and \eqref{18/11/25/15:51}, we see that   \begin{equation}\label{18/11/25/16:45}
\begin{split}
\|\zeta_{\omega} \|_{L^{r}}
&\lesssim 
\| \{ 1+(-\Delta + s(\omega) )^{-1}V \}
\zeta_{\omega} 
\|_{L^{r}}
\\[6pt]
&=
\| (-\Delta + s(\omega) )^{-1} (-\Delta+s(\omega) +V ) 
\zeta_{\omega} 
\|_{L^{r}}
\\[6pt]
&\lesssim  
s(\omega)
\|(-\Delta+s(\omega))^{-1} W \|_{L^{r}} 
+
t(\omega) \kappa(\omega)^{-1}
\|(-\Delta+s(\omega))^{-1} g(\mu(\omega) M_{\omega}W) 
\|_{L^{r}} 
\\[6pt]
&\quad +
\| (-\Delta+s(\omega))^{-1} h_{\omega} 
\|_{L^{r}}
.
\end{split} 
\end{equation}
Consider the first term on the right-hand side of \eqref{18/11/25/16:45}. 
 By Lemma \ref{18/11/05/10:29}, we see that  
\begin{equation}\label{18/11/25/15:27}
\begin{split}
s(\omega) \|(-\Delta+s(\omega))^{-1} W \|_{L^{r}} 
&\lesssim 
s(\omega)^{\frac{d-2}{2}-\frac{d}{2r}}
\|W\|_{L_{\rm weak}^{\frac{d}{d-2}}}
\lesssim 
s(\omega)^{\frac{d-2}{2}-\frac{d}{2r}}
.
\end{split}
\end{equation}
Move on to the second term on the right-hand side of \eqref{18/11/25/16:45}. 
 By Lemma \ref{22/2/24/8:36}, \eqref{22/1/26/10:35}, 
Lemma \ref{18/11/05/10:29} and \eqref{21/12/24/13:39}, we see that 
\begin{equation}\label{18/12/07/10:37}
\begin{split}
&t(\omega)\kappa(\omega)^{-1}
\| (-\Delta+s(\omega))^{-1} g(\mu(\omega) M_{\omega}W)
\|_{L^{r}} 
\\[6pt]
&\lesssim 
s(\omega)^{\frac{d-2}{2}-\varepsilon}
M_{\omega}^{-p_{2}}
s(\omega)^{-\frac{d}{2r}} 
\| g(\mu(\omega) M_{\omega}W) \|_{L^{\frac{d}{2}}}
\\[6pt]
&\lesssim 
s(\omega)^{\frac{d-2}{2}-\frac{d}{2r}-\varepsilon}
M_{\omega}^{-p_{2}}
\sum_{j=1,2} 
M_{\omega}^{p_{j}}\|W \|_{L^{\frac{dp_{j}}{2}}}^{p_{j}}
\lesssim
s(\omega)^{\frac{d-2}{2}-\frac{d}{2r}-\varepsilon}
.
\end{split}
\end{equation}
Consider the last term on the right-hand side of \eqref{18/11/25/16:45}. 
 By  Lemma \ref{18/11/23/17:17}, Lemma \ref{18/11/05/10:29}, a computation involving the fundamental theorem of calculus, H\"older's inequality, 
 \eqref{22/1/26/10:35}, Lemma \ref{22/2/24/8:36} and \eqref{21/12/24/13:39}, we see that 
\begin{equation}\label{20/12/22/16}
\begin{split}
&\| (-\Delta+s(\omega) )^{-1} h_{\omega} \|_{L^{r}}
\\[6pt]
&\lesssim 
\|Q_{\omega}^{\frac{d+2}{d-2}}-W^{\frac{d+2}{d-2}} - \frac{d+2}{d-2}W^{\frac{4}{d-2}}\zeta_{\omega} \|_{L^{\frac{dr}{d+2r}}}
\\[6pt]
&\quad +
t(\omega)\kappa(\omega)^{-1}
s(\omega)^{-\frac{d}{2r}} 
\|g( \mu(\omega)M_{\omega}Q_{\omega}) -g( \mu(\omega)M_{\omega}W) \|_{L^{\frac{d}{2}}}
\\[6pt]
&\lesssim
\| 
|W|^{\frac{6-d}{d-2}} |\zeta_{\omega} |^{2} + |\zeta_{\omega} |^{\frac{d+2}{d-2}}
\|_{L^{\frac{dr}{d+2r}}}
\\[6pt]
&\quad +
t(\omega) \kappa(\omega)^{-1}
s(\omega)^{-\frac{d}{2r}} 
\| \int_{0}^{1} g'( \mu(\omega)M_{\omega}\{ W+\theta \zeta_{\omega}\})
d\theta 
\mu(\omega)M_{\omega} \zeta_{\omega}
\|_{L^{\frac{d}{2}}} 
\\[6pt]
&\lesssim  
\| \zeta_{\omega} \|_{L^{2^{*}}}
\| \zeta_{\omega}\|_{L^{r}}
+ 
\| \zeta_{\omega} \|_{L^{2^{*}}}^{\frac{4}{d-2}}
\| \zeta_{\omega}\|_{L^{r}}
\\[6pt]
&\quad +
s(\omega)^{\frac{d-2}{2}-\frac{d}{2r}-\varepsilon}
M_{\omega}^{-p_{2}}
\sum_{j=1,2}M_{\omega}^{p_{j}}\big\{
\| W^{p_{j}-1} \zeta_{\omega} \|_{L^{\frac{d}{2}}}
+
\| \zeta_{\omega}  \|_{L^{\frac{dp_{j}}{2}}}^{p_{j}}
\big\}
.
\end{split}
\end{equation}
Here, for $j=1,2$, define $r(p_{j})$ as 
\begin{equation}\label{22/1/1/17:31}
r(p_{j}):=
\left\{ \begin{array}{ccc}
\frac{d}{2-(d-2)(p_{j}-1)} & \mbox{if} & \frac{2}{d-2}< p_{j}<\frac{d}{d-2},
\\[6pt]
\infty &\mbox{if}& \frac{d}{d-2}\le p_{j} < \frac{d+2}{d-2}
.
\end{array} \right.
\end{equation} 
Note that $\frac{d}{d-2}<r(p_{j})$. Fix $\frac{d}{d-2}<r_{j}<r(p_{j})$. 
Then, by H\"older's inequality, we see that if $d=3,4$ (hence $\frac{d}{2}\le \frac{d}{d-2}$), then 
\begin{equation}\label{20/12/20/16:41}
\| W^{p_{j}-1} \zeta_{\omega} \|_{L^{\frac{d}{2}}}
\le 
\| W \|_{L^{\frac{dr_{j}(p_{j}-1)}{2r_{j}-d}}}^{p_{j}-1} 
\|\zeta_{\omega}\|_{L^{r_{j}}} 
\lesssim
\|\zeta_{\omega}\|_{L^{r_{j}}}
. 
\end{equation}
Plugging \eqref{20/12/20/16:41} into \eqref{20/12/22/16}, 
 and using \eqref{18/11/27/07:02} in Lemma \ref{18/11/27/11:20}, we see that  
\begin{equation}\label{18/12/06/15:17}
\begin{split}
&
\| (-\Delta+s(\omega))^{-1} h_{\omega} 
\|_{L^{r}}
= 
o_{\omega}(1) 
\{ \|\zeta_{\omega} \|_{L^{r}}
+
s(\omega)^{\frac{d-2}{2}-\frac{d}{2r}-\varepsilon}
\}
.
\end{split}
\end{equation} 
Furthermore, putting \eqref{18/11/25/16:45}, \eqref{18/11/25/15:27}, \eqref{18/12/07/10:37} and \eqref{18/12/06/15:17} together, we find that 
 \eqref{20/12/26/17:57} holds, namely, it holds that  
\begin{equation}\label{21/1/2/14:42}
\|\zeta_{\omega}\|_{L^{r}} 
\lesssim 
s(\omega)^{\frac{d-2}{2}-\frac{d}{2r}-\varepsilon}
\qquad 
\mbox{for all $\frac{d}{d-2}<r<\infty$ and $\varepsilon>0$}
,
\end{equation}
where the implicit constant may depend on $r$ and $\varepsilon$.


We shall prove the last claim \eqref{18/12/10/22:44}. 
 To this end, we introduce the following symbols:  
\begin{align}
\label{18/12/23/15:35}
\mathscr{X}(\omega)&:= 
\delta(s(\omega)) 
\langle 
(-\Delta + s(\omega))^{-1} W, V \Lambda W
\rangle
,
\\[6pt]
\label{18/12/23/15:51}
\mathscr{K}(\omega)
&:= 
\kappa(\omega)^{-1} \langle 
(-\Delta + s(\omega) )^{-1} g(\mu(\omega)M_{\omega}W), V \Lambda W
\rangle
,
\\[6pt]
\label{20/12/23/9:50}
\mathscr{H}(\omega)
&:=
\langle 
(-\Delta + s(\omega) )^{-1} h_{\omega}, V \Lambda W
\rangle
.
\end{align}
Observe from $\beta(s)=\delta(s)^{-1}s$ (see \eqref{19/01/15/08:25}), \eqref{18/11/25/15:51}, and \eqref{18/11/27/07:03} in Lemma \ref{18/11/27/11:20} that   
\begin{equation}\label{18/12/10/10:07}
\beta(s(\omega)) 
\mathscr{X}(\omega)
-
t(\omega)
\mathscr{K}(\omega)
=
\mathscr{H}(\omega)
. 
\end{equation}
Fix $\max\{1, \frac{d}{(d-2)p_{1}}\} < r_{0} <\frac{d}{2}$ and $0<\varepsilon <\frac{d}{2r_{0}}-1$. Then, by the definition of $\kappa(\omega)$ (see \eqref{21/12/31/9:57}), 
 Lemma \ref{18/12/19/01:00} and \eqref{22/1/26/10:35}, we see that  
\begin{equation}\label{18/12/12/15:52}
\begin{split}
&|  \mathscr{K}(\omega) - 1 | 
\\[6pt]
&=
\kappa(\omega)^{-1}
|\langle 
(-\Delta + s(\omega) )^{-1} g(\mu(\omega)M_{\omega}W), V \Lambda W
\rangle 
+ \langle g(\mu(\omega)M_{\omega}W), \Lambda W \rangle
| 
\\[6pt]
&\lesssim
\kappa(\omega)^{-1} M_{\omega}^{p_{2}}s(\omega)^{\frac{d}{2r_{0}}-1}
\lesssim s(\omega)^{\frac{d}{2r_{0}}-1}
.
\end{split} 
\end{equation}
Furthermore, by  \eqref{18/12/10/10:07}, Lemma \ref{20/12/22/13:55}, \eqref{18/12/12/15:52} and Lemma \ref{22/2/24/8:36}, we see that  
\begin{equation}\label{18/12/23/16:11}
\begin{split}
&
\big|
A_{1} \beta(s(\omega))- t(\omega)
\big|
= 
\big| 
\beta(s(\omega))  \{ \mathscr{X}(\omega) - A_{1} \}
-
t(\omega)\{  \mathscr{K}(\omega)-1  \} 
-
\mathscr{H}(\omega)
\big|
\\[6pt]
&\lesssim 
o_{\omega}(1) \beta(s(\omega))
+
s(\omega)^{\frac{d-2}{2}-\varepsilon+\frac{d}{2r_{0}}-1}
+
| \mathscr{H}(\omega) |
. 
\end{split}
\end{equation}
Since $\beta(s)=\delta(s)^{-1}s$ (see \eqref{19/01/15/08:25}) and $\frac{d}{2r_{0}}-1 > \varepsilon $,
 the second term on the right-hand side of \eqref{18/12/23/16:11} is
 acceptable to \eqref{18/12/10/22:44}. 
 Hence, it suffices to show that there exists $\varepsilon_{0}>0$ such that 
\begin{equation}\label{18/12/23/17:01}
| \mathscr{H}(\omega) |
\lesssim  
s(\omega)^{\frac{d-2}{2}+\varepsilon_{0}}
.
\end{equation}
We shall prove \eqref{18/12/23/17:01}. 
 For $j=1,2$, let $\frac{d}{d-2}< r_{j} < r(p_{j})$, where $r(p_{j})$ is the same constant as in \eqref{22/1/1/17:31}. 
 Furthermore, let $\frac{d}{d-2}<r<\infty$ and $\varepsilon>0$.    
 Then, by H\"older's inequality, $|V\Lambda W|\lesssim (1+|x|)^{-(d+2)}$, 
 \eqref{20/12/22/16}, \eqref{20/12/20/16:41} and \eqref{21/1/2/14:42},
 we see that 
\begin{equation}\label{22/1/2/14:23}
\begin{split}
| \mathscr{H}(\omega) |
&\le 
\| (-\Delta+s(\omega) )^{-1} h_{\omega} \|_{L^{r}}
\|V\Lambda W \|_{L^{\frac{r}{r-1}}}
\\[6pt]
&\lesssim  
\| \zeta_{\omega} \|_{L^{2^{*}}}
\| \zeta_{\omega}\|_{L^{r}}
+ 
\| \zeta_{\omega} \|_{L^{2^{*}}}^{\frac{4}{d-2}}
\| \zeta_{\omega}\|_{L^{r}}
\\[6pt]
&\quad +
s(\omega)^{\frac{d-2}{2}-\frac{d}{2r}-\varepsilon}
\sum_{j=1,2} 
\big\{
\|  \zeta_{\omega} \|_{L^{r_{j}}}
+
\| \zeta_{\omega}  \|_{L^{\frac{dp_{j}}{2}}}^{p_{j}}
\big\}
\\[6pt]
&\lesssim 
s(\omega)^{\frac{d-2}{2}+\frac{d-2}{4}-\frac{d}{2r}-2\varepsilon}
+
s(\omega)^{\frac{d-2}{2}+1-\frac{d}{2r}-\frac{d+2}{d-2}\varepsilon}
\\[6pt]
&\quad +
s(\omega)^{\frac{d-2}{2}-\frac{d}{2r}-\varepsilon}
\max_{j=1,2}\bigm\{
s(\omega)^{\frac{d-2}{2}-\frac{d}{2r_{j}}-\varepsilon}
+
s(\omega)^{\frac{(d-2)p_{j}}{2}-1-\varepsilon p_{j}}
\bigm\},
\end{split}
\end{equation}
where the implicit constants may depend on $r$ and $\varepsilon$. 
 Note that for $j=1,2$, $\frac{d}{2r_{j}}<\frac{d-2}{2}$ and $\frac{(d-2)p_{j}}{2}>1$. Hence, \eqref{22/1/2/14:23} together with an appropriate choice of $r$ and $\varepsilon$ shows that \eqref{18/12/23/17:01} holds.  
 Thus, we have completed the proof of Proposition \ref{18/12/12/13:21}. 
\end{proof}


\section{Proof of Theorem \ref{20/8/17/11:39}}
\label{18/09/09/17:30}

In this section,  we prove Theorem \ref{20/8/17/11:39}. 
 We give a proof of the claim {\it (i)} in Section \ref{22/2/23/13:19} and the claim {\it (ii)} in Section \ref{21/10/9/13:18}.  


\subsection{Nondegeneracy of ground states}\label{22/2/23/13:19}
In this section, we prove the claim {\it (i)} of Theorem \ref{20/8/17/11:39}. We will use the symbols in Notation \ref{21/12/29/17:3}.
 
\begin{proof}[Proof of Theorem \ref{20/8/17/11:39}]
Suppose for contradiction that the claim {\it (i)} of Theorem \ref{20/8/17/11:39} is false. Then, we can take a sequence $\{ \omega_{n} \}$ in $(\omega_{1},\infty)$ with the following properties:  $\lim_{n\to \infty}\omega_{n}=\infty$; and for any $n\ge 1$, there exists  a nontrivial real-valued radial function $z_{n} \in H_{\rm rad}^{1}(\mathbb{R}^{d})$ such that  
\begin{equation}\label{18/12/08/12:25}
\big\{ -\Delta  +  \omega_{n}
-
\frac{d+2}{d-2}
\Phi_{\omega_{n}}^{\frac{4}{d-2}} 		
-
g'(\Phi_{\omega_{n}}) 
\big\} 
z _{n}=0 
\qquad
\mbox{in the weak sense}
.   
\end{equation}

We see that $z_{n} \in C^{2}(\mathbb{R}^{d})$ (see, e.g., Theorem 11.7 of \cite{Lieb-Loss}), which together with $z_{n}$ being nontrivial and the Strauss' radial lemma (see Lemma 1 of \cite{Strauss}) shows that $0< \|z_{n}\|_{L^{\infty}}<\infty$. Thus, we may assume that  
\begin{equation}\label{18/12/08/12:24}
\|z_{n}\|_{L^{\infty}}= 1
\qquad 
\mbox{for all $n\ge 1$}
.
\end{equation}
Furthermore, by Lemma \ref{18/09/05/01:05}, Lemma \ref{18/11/24/17:55} and Proposition \ref{18/12/12/13:21}, we may assume that for any $n\ge 1$, there exists $\mu_{n}=1+o_{n}(1)$  such that,  
defining  $s_{n}$, $\kappa_{n}$, $t_{n}$,  $Q_{n}$ and $\zeta_{n}$ as 
\begin{align}
\label{18/12/08/12:02}
s_{n}&:= \{ \mu_{n} M_{\omega_{n}}\}^{-\frac{4}{d-2}}\omega_{n},
\\[6pt]
\label{22/1/3/11:23}
\kappa_{n}
&:=-\langle g(\mu_{n}M_{\omega_{n}}W), \Lambda W \rangle,
\qquad 
t_{n}:=\{ \mu_{n} M_{\omega_{n}} \}^{-\frac{d+2}{d-2}}
\kappa_{n}
,
\\[6pt]
\label{20/12/25/10:55}
Q_{n}
&:=T_{ \mu_{n}}[\widetilde{\Phi}_{\omega_{n}}]
,
\qquad 
\zeta_{n}
:= 
Q_{n} - W
,
\end{align}
we have the following:
\begin{enumerate}
\item 
\begin{equation}\label{20/12/28/1}
\lim_{n\to \infty} s_{n}=0,
\qquad 
\lim_{n\to \infty} t_{n}
=0
.
\end{equation}


\item 
\begin{equation}\label{19/01/01/12:18}
|Q_{n}(x)| \lesssim (1+|x|)^{-(d-2)},
\qquad 
\|Q_{n}\|_{L^{r}} \sim 1
\qquad 
\mbox{for all $\frac{d}{d-2}< r \le \infty$}, 
\end{equation}
where the implicit constant in the second claim  may depend on $r$.


\item 
\begin{equation}\label{20/12/28/10:26}
\|\zeta_{n}\|_{L^{r}} 
\lesssim 
s_{n}^{\frac{d-2}{2}-\frac{d}{2r}-\varepsilon} 
\quad 
\mbox{for all $\frac{d}{d-2} < r <\infty$ and $\varepsilon>0$}
,
\end{equation}
where the implicit constant may depend on $r$ and $\varepsilon$.


\item 
There exists $\varepsilon_{0}>0$ such that 
\begin{equation}\label{21/4/9/9:45}
| A_{1} \beta(s_{n})  - t_{n} | 
\lesssim  
o_{n}(1) \beta(s_{n})
,
\end{equation} 
where $A_{1}$ is the same constant as in Lemma \ref{20/12/22/13:55}; We may write \eqref{21/4/9/9:45} as 
\begin{equation}\label{21/4/28/11:48}
\big| A_{1} \delta(s_{n})^{-1} s_{n} -t_{n} \big|
=
o_{n}(1)t_{n}
.
\end{equation}

\item 
\begin{equation}\label{22/1/3/14:5}
\lim_{n\to \infty}
\frac{\kappa_{n}}{\{\mu_{n}M_{\omega_{n}}\}^{p_{2}}}
=
-\frac{C_{2}}{p_{2}}
\langle W^{p_{2}}, \Lambda W \rangle 
,
\qquad 
\kappa_{n} \sim M_{\omega_{n}}^{p_{2}}.
\end{equation}
\end{enumerate}

We define $\widetilde{z}_{n}$ by  
\begin{equation}\label{18/12/08/11:10}
\widetilde{z}_{n}(x):=z_{n} ( \{\mu_{n} M_{\omega_{n}} \}^{-\frac{2}{d-2}}x ).
\end{equation}
This transformation preserves the norm of $L^{\infty}(\mathbb{R}^{d})$.  
Hence, \eqref{18/12/08/12:24} shows that   
\begin{equation}\label{18/09/12/07:19}
\| \widetilde{z}_{n}  \|_{L^{\infty}} 
=
1
\qquad 
\mbox{for all $n\ge 1$}
.
\end{equation}
Observe from \eqref{21/12/29/15:43} and \eqref{18/12/08/12:25} that 
\begin{align}
\label{21/5/12/5:26}
-\Delta  Q_{n} 
+
s_{n} Q_{n}
-
Q_{n}^{\frac{d+2}{d-2}}
-
t_{n} \kappa_{n}^{-1} 
g(\mu_{n} M_{\omega_{n}} Q_{n})  
&=
0,
\\[6pt]
\label{eq:4.30}
\big\{ -\Delta + s_{n}   
- 
\frac{d+2}{d-2}
Q_{n}^{\frac{4}{d-2}} 
-
\{\mu_{n}M_{\omega_{n}}\}^{-\frac{4}{d-2}}
g'(\mu_{n}M_{\omega_{n}}Q_{n}) 
\big\}\,  
\widetilde{z}_{n} 
&=0.
\end{align}

We give an outline of how to derive a contradiction:


\noindent 
{\bf Overview of the proof.}~We break the proof into small claims:


\noindent 
{\bf Claim 1.}~There exist a subsequence of $\{\widetilde{z}_{n}\}$ (which we continue to denote by $\{\widetilde{z}_{n}\}$) and a non-zero constant $c_{\infty} \neq 0$ such that 
\begin{equation}\label{22/2/23/6:1} 
\lim_{n\to \infty}\widetilde{z}_{n}
=
c_{\infty} \Lambda W 
\quad \mbox{weakly in $\dot{H}^{1}(\mathbb{R}^{d})$ and strongly in $C_{\rm{loc}}^{1}(\mathbb{R}^{d})$}
.
\end{equation}

 
\noindent 
{\bf Claim 2.}~The following identity holds for all $n\ge 1$: 
\begin{equation}\label{proof-eq-0}
\frac{s_{n}}{t_{n}}
\langle Q_{n} , \widetilde{z}_{n} \rangle  
= 
\frac{\{\mu_{n}M_{\omega_{n}}\}^{p_{2}}}{\kappa_{n}} 
\rho_{n} 
,
\end{equation}
where 
\begin{equation}\label{22/1/5/15:41}
\rho_{n}
:=
\{\mu_{n}M_{\omega_{n}}\}^{-p_{2}}\langle 
\frac{d+2}{4} g(\mu_{n}M_{\omega_{n}}Q_{n})
-
\frac{d-2}{4}  
g'(\mu_{n}M_{\omega_{n}}Q_{n}) \mu_{n}M_{\omega_{n}}Q_{n}
, \, \widetilde{z}_{n} \rangle
.
\end{equation}

\noindent 
{\bf Claim 3.}~The left-hand side of \eqref{proof-eq-0} obeys 
\begin{equation}\label{21/4/8/11:31}
c_{\infty}
\lim_{n \to \infty} \frac{s_{n}}{t_{n}}
\langle Q_{n},  \widetilde{z}_{n} \rangle 
=
-
\frac{(d+2)-(d-2)p_{2}}{4} c_{\infty}^{2}  
,
\end{equation}
where $c_{\infty} \neq 0$ is the same constant as in \eqref{22/2/23/6:1}. We remark that the factor $c_{\infty}$ on the left-hand side of \eqref{21/4/8/11:31} is necessary to fix the sign of the right-hand side.


Before stating the other claims, we introduce the function $Q_{n}^{\perp}$ as  
\begin{equation}\label{18/09/12/07:25}
Q_{n}^{\perp} 
:=
Q_{n} - \tau_{n} V \Lambda W  
\qquad 
\mbox{with}
\quad  
\tau_{n}:= 
\frac{\langle  (-\Delta + s_{n})^{-1} Q_{n},V \Lambda W \rangle}
{\langle  (-\Delta +s_{n})^{-1} V \Lambda W,V\Lambda W \rangle}
.
\end{equation}
Note that 
\begin{equation}\label{18/09/12/07:26}
\langle (-\Delta + s_{n})^{-1} Q_{n}^{\perp}, V \Lambda W \rangle=0
.
\end{equation}
Furthermore, observe from $\dfrac{s_{n}}{t_{n}}=(1+o_{n}(1))\dfrac{\delta(s_{n})}{A_{1}}$ (see \eqref{21/4/28/11:48}) that 
\begin{equation}\label{21/10/16/17:58}
\frac{s_{n}}{t_{n}}
\langle Q_{n},\widetilde{z}_{n} \rangle 
=
\frac{1}{A_{1}}\{ 1 +o_{n}(1) \} 
\big\{
\delta(s_{n})
\langle \tau_{n} V\Lambda W, \widetilde{z}_{n} \rangle
+
\delta(s_{n})
\langle Q_{n}^{\perp}, \widetilde{z}_{n} \rangle 
\big\}
.
\end{equation}


The rest of the claims are the following: 


\noindent 
{\bf Claim 4.}
\begin{equation}\label{21/4/5/10:16}
\lim_{n\to \infty}
\langle  (-\Delta +s_{n})^{-1} V \Lambda W,V\Lambda W \rangle
=
-\langle \Lambda W, V\Lambda W \rangle
>0
.
\end{equation}

\noindent 
{\bf Claim 5.}~Assume $d=3$. Then, for each $0< \varepsilon <1$, the following holds:
\begin{equation}\label{21/4/5/10:25}
\lim_{n\to \infty}
\delta(s_{n}) \langle  (-\Delta +s_{n})^{-1} Q_{n},V \Lambda W \rangle
\ge 
\frac{1-\varepsilon}{2} A_{1}  
.
\end{equation}

\noindent 
{\bf Claim 6.}~Assume $d=4$. Then, the following holds:

\begin{equation}\label{22/2/22/16:31}
\lim_{n\to \infty}
\delta(s_{n}) \langle  (-\Delta +\widetilde{s}_{n})^{-1} Q_{n},V \Lambda W \rangle
=
A_{1} 
.
\end{equation}

\noindent 
{\bf Claim 7.}~ 
\begin{equation}\label{21/4/9/10:48}
\lim_{n \to \infty}
\delta(s_{n})
\langle Q_{n}^{\perp}, \widetilde{z}_{n} \rangle
=
0. 
\end{equation}


Now, accepting the above claims for the time being, we derive a contradiction: 


\noindent 
{\bf Completion of the proof.}~Assume $d=3$. Let $\{\widetilde{z}_{n}\}$ be the same sequence as in \eqref{22/2/23/6:1}.  
 Then, by \eqref{21/4/5/10:16} and \eqref{21/4/5/10:25}, we see that for each 
 $0< \varepsilon <1$,   
\begin{equation}\label{21/4/26/11:25}
\lim_{n\to \infty}
\delta(s_{n}) \tau_{n} \langle V \Lambda W, \Lambda W  \rangle   
\le 
-
\frac{1-\varepsilon}{2} A_{1}
.
\end{equation}
Then, by \eqref{21/10/16/17:58},  \eqref{22/2/23/6:1}, \eqref{21/4/9/10:48} and \eqref{21/4/26/11:25}, we see that    
\begin{equation}\label{proof-eq-2}
\begin{split}
&c_{\infty}
\lim_{n \to \infty} 
\frac{s_{n}}{t_{n}}
\langle Q_{n}, \widetilde{z}_{n} \rangle 
\\[6pt]
&=
c_{\infty}
\lim_{n\to \infty}
\frac{1}{A_{1}}\{ 1 +o_{n}(1) \} 
\big\{
\delta(s_{n})
\tau_{n}
\langle  V\Lambda W, \widetilde{z}_{n} \rangle
+
o_{n}(1)
\big\}
\\[6pt]
&= 
c_{\infty}
\lim_{n\to \infty}
\frac{1}{A_{1}}
\delta(s_{n})
\tau_{n}
c_{\infty} \langle  V\Lambda W, \Lambda W \rangle
\le 
- \frac{1-\varepsilon}{2} c_{\infty}^{2}
.
\end{split}
\end{equation}
Furthermore, by \eqref{21/4/8/11:31}, \eqref{proof-eq-2}, $p_{2}<\frac{d+2}{d-2}$ and $c_{\infty}\neq 0$, we see that 
\begin{equation}\label{22/2/19/14:50}
\frac{(d+2)-(d-2)p_{2}}{4} 
\ge 
\frac{1-\varepsilon}{2}
.
\end{equation}
However, taking sufficiently small $\varepsilon$, we see that \eqref{22/2/19/14:50} contradicts $p_{2}>3$. 
 Thus, the claim {\it (i)} of Theorem \ref{20/8/17/11:39} must be true in the case $d=3$. 

Next, assume $d=4$. Then, by \eqref{21/10/16/17:58},  \eqref{22/2/23/6:1}, 
 \eqref{21/4/9/10:48} and \eqref{22/2/22/16:31}, we see that 
\begin{equation}\label{22/2/23/11:34}
\lim_{n \to \infty} 
\frac{s_{n}}{t_{n}}
\langle Q_{n},\widetilde{z}_{n} \rangle 
=-c_{\infty} 
.
\end{equation}
Furthermore, by \eqref{21/4/8/11:31}, \eqref{22/2/23/11:34} and $c_{\infty}\neq 0$, we see that 
\begin{equation}\label{22/2/23/14:19}
\frac{(d+2)-(d-2)p_{2}}{4} 
=
1
.
\end{equation}
However, \eqref{22/2/23/14:19} contradicts $p_{2}>1$. Thus, the claim {\it (i)} of Theorem \ref{20/8/17/11:39} must be true in the case $d=4$. 


\vspace{6pt}


It remains to prove the above claims. 

\noindent 
{\bf Preliminaries.}~By \eqref{21/12/24/13:39}, $\mu_{n}=1+o_{n}(1)$, $\|Q_{n}\|_{L^{\infty}}\lesssim 1$ (see \eqref{19/01/01/12:18}), and \eqref{21/10/14/15:22} in Lemma \ref{proposition:2.3}, we see that 
\begin{equation}\label{22/1/3/17:15}
\begin{split}
\{\mu_{n}M_{\omega_{n}}\}^{-\frac{4}{d-2}}
| g'(\mu_{n}M_{\omega_{n}} Q_{n}) |
&\lesssim 
M_{\omega_{n}}^{-\frac{4}{d-2}}
\big\{
(M_{\omega_{n}}Q_{n})^{p_{1}-1}
+
(M_{\omega_{n}}Q_{n})^{p_{2}-1}
\big\}
\\[6pt]
&\lesssim 
M_{\omega_{n}}^{p_{2}-\frac{d+2}{d-2}}Q_{n}^{p_{1}-1}
=
o_{n}(1).
\end{split} 
\end{equation}

By the Pohozaev's identity \eqref{18/11/24/17:41} (see also the first line of \eqref{20/12/16/16:48}),  \eqref{21/12/24/13:39}, 
 $M_{\omega_{n}}^{-\frac{d+2}{d-2}+p_{2}} \sim t_{n}$ (see \eqref{22/1/3/11:23} and \eqref{22/1/3/14:5}), \eqref{19/01/01/12:18} and \eqref{21/4/28/11:48}, we see that 
\begin{equation}\label{21/3/31/10:3}
\| Q_{n} \|_{L^{2}}^{2} 
\lesssim 
s_{n}^{-1}
M_{\omega_{n}}^{-2^{*}}
\sum_{j=1}^{2} M_{\omega_{n}}^{p_{j}+1}
\| Q_{n} \|_{L^{p_{j}+1}}^{p_{j}+1} 
\lesssim 
s_{n}^{-1}t_{n}
\lesssim 
\delta(s_{n})^{-1}
.
\end{equation}
Moreover, by \eqref{22/1/3/14:5}, \eqref{21/12/24/13:39} and $\|Q_{n}\|_{L^{\infty}}\lesssim 1$ (see \eqref{19/01/01/12:18}), we see that 
\begin{equation}\label{22/1/9/17:50}
t_{n}\kappa_{n}^{-1} | g(\mu_{n}M_{\omega_{n}} Q_{n}) | 
\lesssim 
t_{n} 
\{ M_{\omega_{n}}^{-(p_{2}-p_{1})} 
Q_{n}^{p_{1}} +Q_{n}^{p_{2}}\}
\lesssim 
t_{n}Q_{n}^{p_{1}}
.
\end{equation} 

We shall show that for $d=3,4$, 
\begin{align}\label{21/5/12/9:35}
s_{n} \|Q_{n}\|_{L^{1}}
&\lesssim 1,
\\[6pt]
\label{22/2/24/18:55}
\| \Delta Q_{n} \|_{L^{1}}
&\lesssim 1 
. 
\end{align}
By \eqref{22/1/9/17:50}, H\"older's inequality, \eqref{19/01/01/12:18},
 \eqref{21/3/31/10:3}, $t_{n}\sim \delta(s_{n})^{-1} s_{n}$ (see \eqref{21/4/28/11:48}), and \eqref{19/01/27/16:58}, we see that: if $1<p_{1}\le 2$, then  
\begin{equation}\label{22/3/4/6:52}
\begin{split}
&
\|Q_{n}\|_{L^{\frac{d+2}{d-2}}}^{\frac{d+2}{d-2}} 
+
\int_{\mathbb{R}^{d}}
t_{n} \kappa_{n}^{-1}\big| g(\mu_{n}M_{\omega_{n}}Q_{n}) \big|
\\[6pt]
&\lesssim  
\| Q_{n} \|_{L^{\frac{d+2}{d-2}}}^{\frac{d+2}{d-2}}
+t_{n}
\| Q_{n}\|_{L^{p_{1}}}^{p_{1}} 
\lesssim 
1 + t_{n}\| Q_{n}\|_{L^{1}}^{2-p_{1}} 
\| Q_{n}\|_{L^{2}}^{2(p_{1}-1)}
\\[6pt]
&\lesssim 
1+ t_{n}
\| Q_{n}\|_{L^{1}}^{2-p_{1}} 
\delta(s_{n})^{-(p_{1}-1)}
\lesssim 
1+ s_{n}\delta(s_{n})^{-p_{1}}
\| Q_{n}\|_{L^{1}}^{2-p_{1}} 
\lesssim 
1+ \| Q_{n}\|_{L^{1}}^{2-p_{1}} 
,
\end{split}
\end{equation}
whereas if $2<p_{1}<\frac{d+2}{d-2}$, then  
\begin{equation}\label{22/2/23/18:26}
\begin{split} 
&
\|Q_{n}\|_{L^{\frac{d+2}{d-2}}}^{\frac{d+2}{d-2}} 
+
\int_{\mathbb{R}^{d}}
t_{n} \kappa_{n}^{-1}\big| g(\mu_{n}M_{\omega_{n}}Q_{n}) \big|
\\[6pt]
&\lesssim 
\|Q_{n} \|_{L^{\frac{d+2}{d-2}}}^{\frac{d+2}{d-2}}
+
t_{n}
\| Q_{n}\|_{L^{2}}^{2} \|Q_{n}\|_{L^{\infty}}^{p_{2}-2}
\lesssim 
1 +
\delta(s_{n})^{-2}s_{n} \lesssim 1
.
\end{split} 
\end{equation}
Moreover, by the positivity of $Q_{n}$, \eqref{21/5/12/5:26}, the divergence theorem, and the exponential decay of $|\nabla Q_{n}|$ (see, e.g., Lemma 2 of \cite{Berestycki-Lions}), we see that  
\begin{equation}\label{21/5/12/6:50}
\begin{split}
&s_{n} \| Q_{n}\|_{L^{1}}
=
\int_{\mathbb{R}^{d}} s_{n}Q_{n} 
=
\int_{\mathbb{R}^{d}}
\Delta Q_{n}  
+
\int_{\mathbb{R}^{d}}
Q_{n}^{\frac{d+2}{d-2}} 
+
\int_{\mathbb{R}^{d}}
t_{n} \kappa_{n}^{-1}g(\mu_{n}M_{\omega_{n}}Q_{n}) 
\\[6pt]
&= 
\lim_{R\to \infty}
\int_{|x|=R}
\nabla Q_{n} 
\cdot \frac{x}{|x|}\,d\sigma 
+
\| Q_{n} \|_{L^{\frac{d+2}{d-2}}}^{\frac{d+2}{d-2}}
+
\int_{\mathbb{R}^{d}}
t_{n} \kappa_{n}^{-1}g(\mu_{n}M_{\omega_{n}}Q_{n})
\\[6pt]
&\le
\| Q_{n} \|_{L^{\frac{d+2}{d-2}}}^{\frac{d+2}{d-2}}
+
\int_{\mathbb{R}^{d}}
t_{n} \kappa_{n}^{-1} 
| g(\mu_{n}M_{\omega_{n}}Q_{n}) |
.
\end{split}
\end{equation}
Then, \eqref{21/5/12/9:35} follows from \eqref{22/3/4/6:52},  \eqref{22/2/23/18:26} and \eqref{21/5/12/6:50}. 
 Furthermore, \eqref{22/2/24/18:55} follows from \eqref{21/5/12/9:35}, 
\eqref{21/5/12/5:26}, \eqref{22/3/4/6:52} and \eqref{22/2/23/18:26}.  


We shall show that 
\begin{equation}\label{21/1/27/11}
\|\nabla \widetilde{z}_{n}\|_{L^{2}}\lesssim 1
.
\end{equation}
Multiplying both sides of \eqref{eq:4.30} by $\widetilde{z}_{n}$, integrating the resulting equation, and using the first inequality in \eqref{22/1/3/17:15}, we see that 
\begin{equation}\label{21/1/30/14:37}
\begin{split}
&\|\nabla \widetilde{z}_{n}\|_{L^{2}}^{2}  
+
s_{n}  \| \widetilde{z}_{n}\|_{L^{2}}^{2} 
\\[6pt]
&\lesssim  
\int_{\mathbb{R}^{d}} 
Q_{n}^{\frac{4}{d-2}} 		
| \widetilde{z}_{n} |^{2}
+
\int_{\mathbb{R}^{d}}
\big\{ 
M_{n}^{p_{1}-\frac{d+2}{d-2}} 
Q_{n}^{p_{1}-1} 
 + 
M_{n}^{p_{2}-\frac{d+2}{d-2}}
Q_{n}^{p_{2}-1} 
\big\}
|\widetilde{z}_{n} |^{2} 
.
\end{split} 
\end{equation}
Consider the first term on the right-hand of \eqref{21/1/30/14:37}. 
 When $d=3$, it follows from \eqref{19/01/01/12:18} and \eqref{18/09/12/07:19} that  
\begin{equation}\label{21/1/30/14:48} 
\int_{\mathbb{R}^{3}} 
Q_{n}^{\frac{4}{d-2}} 		
| \widetilde{z}_{n} |^{2}
\le 
\| Q_{n}^{4} \|_{L^{1}}
\| \widetilde{z}_{n} \|_{L^{\infty}}^{2} 
\lesssim 1
,
\end{equation}
whereas when $d=4$, it follows from \eqref{19/01/01/12:18}, 
 \eqref{18/09/12/07:19} and Sobolev's inequality that  
\begin{equation}\label{22/3/11/11:27} 
\int_{\mathbb{R}^{4}} 
Q_{n}^{\frac{4}{d-2}} 		
| \widetilde{z}_{n} |^{2}
\le 
\| Q_{n}^{2} \|_{L^{\frac{4}{3}}}
\| \widetilde{z}_{n}  \|_{L^{\infty}} 
\|  \widetilde{z}_{n}  \|_{L^{4}} 
\lesssim
\| \nabla \widetilde{z}_{n}\|_{L^{2}} 
.
\end{equation}
Consider the second term on the right-hand of \eqref{21/1/30/14:37}. 
 Assume $d=4$. Note that $1<p_{1}<p_{2}<3$. 
 By $M_{\omega_{n}}^{-\frac{4}{d-2}} \sim \omega_{n}^{-1}s_{n}$ (see \eqref{18/12/08/12:02}), \eqref{19/01/01/12:18}, 
 the Gagliardo-Nirenberg inequality and Young's one, 
 the following holds for all $1<q<3$:  
\begin{equation}\label{22/3/11/10:40}
\begin{split}
&
M_{\omega_{n}}^{q-\frac{d+2}{d-2}}
\int_{\mathbb{R}^{4}} 
Q_{n}^{q-1} |\widetilde{z}_{n} |^{2}
\lesssim   
( \omega_{n}^{-1} s_{n})^{\frac{3-q}{2}}
\| Q_{n}\|_{L^{q+1}}^{q-1}
\| \widetilde{z}_{n} \|_{L^{q+1}}^{2}
\\[6pt]
&\lesssim  
o_{n}(1) s_{n}^{\frac{3-q}{2}}
\| \widetilde{z}_{n} \|_{L^{2}}^{\frac{2(3-q)}{q+1}}  
\| \nabla \widetilde{z}_{n} \|_{L^{2}}^{\frac{4(q-1)}{q+1}}
\lesssim 
o_{n}(1) \big\{ s_{n}^{\frac{q+1}{2}}\| \widetilde{z}_{n} \|_{L^{2}}^{2}
+
\| \nabla \widetilde{z}_{n} \|_{L^{2}}^{2}
\big\} 
,
\end{split}
\end{equation}
where the implicit constants may depend on $q$. 
Next, assume $d=3$. Note that $2<p_{1}<p_{2}<5$. 
 By $M_{\omega_{n}}^{-\frac{4}{d-2}} \sim \omega_{n}^{-1}s_{n}$, H\"older's inequality, \eqref{19/01/01/12:18} and Sobolev's inequality, we see that:  
for $3< q<5$,     
\begin{equation}\label{21/1/30/14:47}
\begin{split}
M_{n}^{q-\frac{d+2}{d-2}}\int_{\mathbb{R}^{3}} 
Q_{n}^{q-1} |\widetilde{z}_{n} |^{2} 
&\lesssim  
(\omega_{n}^{-1} s_{n})^{\frac{5-q}{4}}
\|Q_{n}^{q-1}\|_{L^{\frac{3}{2}}}
\| |\widetilde{z}_{n}|^{2}\|_{L^{3}} 
\lesssim
o_{n}(1)
\| \nabla \widetilde{z}_{n} \|_{L^{2}}^{2}  
,
\end{split}
\end{equation}
and for $2< q\le 3$,   
\begin{equation}\label{22/3/11/10}
\begin{split}
&M_{\omega_{n}}^{q-\frac{d+2}{d-2}}\int_{\mathbb{R}^{3}} 
Q_{n}^{q-1} |\widetilde{z}_{n} |^{2} 
\lesssim  
(\omega_{n}^{-1}s_{n})^{\frac{5-q}{4}}
\|Q_{n}^{q-1}\|_{L^{3}}
\| \widetilde{z}_{n} \|_{L^{2}}
\| \widetilde{z}_{n} \|_{L^{6}}
\\[6pt]
&\lesssim 
o_{n}(1) \big\{ 
s_{n}^{\frac{5-q}{2}}
\| \widetilde{z}_{n} \|_{L^{2}}^{2}
+
\| \nabla \widetilde{z}_{n} \|_{L^{2}}^{2}
\big\}
\le  
o_{n}(1) \big\{ 
s_{n}\| \widetilde{z}_{n} \|_{L^{2}}^{2}
+
\| \nabla \widetilde{z}_{n} \|_{L^{2}}^{2}
\big\},
\end{split}
\end{equation}
where the implicit constants in \eqref{21/1/30/14:47} and \eqref{22/3/11/10} may depend on $q$. 
Plugging the estimates  \eqref{21/1/30/14:48} through \eqref{22/3/11/10} into \eqref{21/1/30/14:37}, 
 we obtain \eqref{21/1/27/11}.  

We can prove the following uniform estimates in a way similar to Lemma 4.3 of \cite{AIIKN}:
\begin{equation}\label{19/01/01/08:23}
|\widetilde{z}_{n}(x)| \lesssim  (1+|x|)^{-(d-2)},
\qquad 
\|\widetilde{z}_{n}\|_{L^{r}} \lesssim 1
\quad 
\mbox{for all $\frac{d}{d-2}<r\le \infty$}
.  
\end{equation}


Now, we give the proofs of the above claims: 

\noindent 
{\bf Proof of Claim 1.}~We shall prove \eqref{22/2/23/6:1}.

By \eqref{19/01/01/08:23}, \eqref{eq:4.30}, \eqref{22/1/3/17:15} 
 and \eqref{19/01/01/12:18}, we see that 
\begin{equation}\label{22/1/4/15:3}
\| \widetilde{z}_{n}\|_{W^{2,2d}}
\sim 
\|\widetilde{z}_{n}\|_{L^{2d}}
+
\|\Delta \widetilde{z}_{n}\|_{L^{2d}} \lesssim 1
.
\end{equation}
Then, it follows from $\widetilde{z}_{n}$ being radial, \eqref{21/1/27/11}, \eqref{22/1/4/15:3} and the Rellich-Kondrashov compactness theorem that there exist a subsequence of $\{\widetilde{z}_{n}\}$ (which we continue to denote by $\{\widetilde{z}_{n}\}$) and a real-valued radial function $\widetilde{z}_{\infty} \in \dot{H}_{\rm{rad}}^{1}(\mathbb{R}^{d}) \cap C_{\rm{loc}}^{1}(\mathbb{R}^{d})$  such that 
\begin{equation}\label{18/09/11/20:31} 
\lim_{n\to \infty}\widetilde{z}_{n}
=\widetilde{z}_{\infty} 
\quad \mbox{weakly in $\dot{H}^{1}(\mathbb{R}^{d})$ and strongly in $C_{\rm{loc}}^{1}(\mathbb{R}^{d})$}
.
\end{equation}
Furthermore, by \eqref{19/01/01/08:23}, we may assume that 
\begin{equation}\label{22/1/8/13:57} 
\lim_{n\to \infty}\widetilde{z}_{n}
=\widetilde{z}_{\infty} 
\quad 
\mbox{weakly in $L^{q}(\mathbb{R}^{d})$ for all $\frac{d}{d-2}< q <\infty$}
.
\end{equation}

Now, let $\phi \in C_{c}^{\infty}(\mathbb{R}^{d})$ be a test function. 
Then, by \eqref{eq:4.30}, the fundamental theorem of calculus, 
 \eqref{18/09/11/20:31}, \eqref{20/12/28/1}, 
 \eqref{18/09/12/07:19}, \eqref{22/1/3/17:15}, \eqref{19/01/01/12:18} and 
 \eqref{20/12/28/10:26}, we see that 
\begin{equation}\label{20/12/24/17:39}
\begin{split}
&
\big|
\langle  (-\Delta + V)  \widetilde{z}_{\infty}, \phi \rangle 
\big|
\\[6pt]
&\lesssim  
\big|
\langle \Delta  \{\widetilde{z}_{n} -\widetilde{z}_{\infty}\}, \phi \rangle 
\big|
+
s_{n} 
| \langle  \widetilde{z}_{n},\phi \rangle |
+
\{\mu_{n}M_{\omega_{n}}\}^{-\frac{4}{d-2}}
\big| \langle g'(\mu_{n}M_{\omega_{n}}Q_{n}) 
\widetilde{z}_{n}, \phi \rangle 
\big|
\\[6pt]
&\quad +
\big|
\langle   
Q_{n}^{\frac{4}{d-2}} \big\{ \widetilde{z}_{n} -\widetilde{z}_{\infty} \big\},
 \phi \rangle
\big|  
+
\big|
\langle   
\int_{0}^{1} \{  W+ \theta \zeta_{n} \}^{\frac{6-d}{d-2}} \zeta_{n} \, d\theta
\, \widetilde{z}_{\infty},
 \phi \rangle  
\big|
\\[6pt]
&\to 0 
\quad 
\mbox{as $n\to \infty$}
.
\end{split}
\end{equation}
Thus, it holds that $(-\Delta  + V) \widetilde{z}_{\infty}=0$ in the distribution sense. 
 Then, we can verify that $\widetilde{z}_{\infty} \in W^{2,2^{*}}(\mathbb{R}^{d})$, which together with the Sobolev's inequality and $d=3,4$ shows that $\widetilde{z}_{\infty} \in C(\mathbb{R}^{d})\cap L^{\infty}(\mathbb{R}^{d})$, which further implies $\widetilde{z}_{\infty} \in W^{2,q}(\mathbb{R}^{d})$ for all $q>2^{*}$. 
 Then, the Sobolev's inequality shows $\widetilde{z}_{\infty} \in C^{1}(\mathbb{R}^{d})$. Furthermore, by the same argument as in the proof of Lemma 1 of \cite{Berestycki-Lions}, we see that $\widetilde{z}_{\infty} \in C^{2}(\mathbb{R}^{d})$. 
 Then, by Lemma A.1 of \cite{CMR}, we see that there exists $c_{\infty} \in \mathbb{R}$ such that 
\begin{equation}\label{eq:4.33}
\widetilde{z}_{\infty} = c_{\infty} \Lambda W.  
\end{equation}
Note that the convergence in $C_{\rm{loc}}^{1}(\mathbb{R}^{d})$ (see \eqref{18/09/11/20:31}) together with \eqref{18/09/12/07:19} and the decay estimate in \eqref{19/01/01/08:23} shows that $c_{\infty} \neq 0$.  


\noindent 
{\bf Proof of Claim 2.}~We shall prove \eqref{proof-eq-0}.

Put  $\widetilde{w}_{n} := x \cdot \nabla Q_{n}$.  
 By an elementary computation, \eqref{21/5/12/5:26} and $t_{n}\kappa_{n}^{-1}=\{\mu_{n}M_{\omega_{n}}\}^{-\frac{d+2}{d-2}}$ (see \eqref{22/1/3/11:23}), 
 we can verify that  
\begin{equation}\label{eq:4.34}
\begin{split}
-\Delta \widetilde{w}_{n} 
+ 
s_{n} \widetilde{w}_{n} 
&=
x\cdot \nabla 
\{ -\Delta Q_{n} + s_{n}Q_{n} \}
-2 \Delta Q_{n} 
\\[6pt]
&= 
\Big\{  
\frac{d+2}{d-2} Q_{n}^{\frac{4}{d-2}}
+
\{ \mu_{n} M_{\omega_{n}}\}^{-\frac{4}{d-2}} g'(\mu_{n} M_{\omega_{n}}Q_{n})  
\Big\}
\widetilde{w}_{n}
\\[6pt] 
&\quad  -
2 \big\{ s_{n} Q_{n}  -
Q_{n}^{\frac{d+2}{d-2}} 
-
t_{n} \kappa_{n}^{-1} 
g(\mu_{n} M_{\omega_{n}} Q_{n})  
\big\}
.
\end{split} 
\end{equation}
Multiplying both sides of \eqref{eq:4.30} by $\widetilde{w}_{n}$, and \eqref{eq:4.34} by $\widetilde{z}_{n}$, and integrating the difference of the resulting equations, we see that  
\begin{equation}\label{eq:4.35}
s_{n} 
\langle Q_{n},  \widetilde{z}_{n} \rangle 
=
\langle 
Q_{n}^{\frac{d+2}{d-2}}, \widetilde{z}_{n}
\rangle
+
t_{n} \kappa_{n}^{-1}
\langle 
g(\mu_{n} M_{\omega_{n}} Q_{n}), \widetilde{z}_{n}  
\rangle
.
\end{equation}
Moreover, multiplying both sides of \eqref{21/5/12/5:26} by $\widetilde{z}_{n}$,  and \eqref{eq:4.30} by $Q_{n}$, integrating the difference of the resulting equations, and using $t_{n}\kappa_{n}^{-1}=\{\mu_{n}M_{\omega_{n}}\}^{-\frac{d+2}{d-2}}$, we see that    
\begin{equation}\label{18/09/11/21:07}
\begin{split}
\frac{4}{d-2}
\langle 
Q_{n}^{\frac{d+2}{d-2}},  \widetilde{z}_{n} 
\rangle 
&= 
t_{n} \kappa_{n}^{-1}
\langle  
g(\mu_{n} M_{\omega_{n}} Q_{n})
-
\mu_{n} M_{\omega_{n}}g'(\mu_{n} M_{\omega_{n}} Q_{n}) Q_{n}, \widetilde{z}_{n}
\rangle
\\[6pt]
&=
\frac{4}{d-2} t_{n} \kappa_{n}^{-1} \{ \mu_{n} M_{\omega_{n}}\}^{p_{2}}
\rho_{n}
-
\frac{4}{d-2} t_{n} \kappa_{n}^{-1} \langle g(\mu_{n} M_{\omega_{n}} Q_{n}), \widetilde{z}_{n}
\rangle. 
\end{split}
\end{equation}
Plugging \eqref{18/09/11/21:07} divided by $\frac{4}{d-2}$ into \eqref{eq:4.35}, 
 we obtain \eqref{proof-eq-0}. 


\noindent 
{\bf Proof of Claim 3.}~We shall prove \eqref{21/4/8/11:31}. To this end, it suffices to show that 
\begin{equation}\label{proof-eq-1}
\lim_{n \to \infty} 
\rho_{n}
=
\frac{(d+2)-(d-2)p_{2}}{4}\frac{C_{2}}{p_{2}}  \langle W^{p_{2}}, \Lambda W \rangle c_{\infty}
.
\end{equation}
Indeed, it follows from \eqref{proof-eq-0} and \eqref{22/1/3/14:5} that 
\begin{equation}\label{22/3/5/10:55}
c_{\infty}\frac{s_{n}}{t_{n}}
\lim_{n\to \infty}
\langle Q_{n} , \widetilde{z}_{n} \rangle  
= 
c_{\infty}
\lim_{n\to \infty}
\frac{\{\mu_{n}M_{\omega_{n}}\}^{p_{2}}}{\kappa_{n}} 
\lim_{n\to \infty} \rho_{n} 
=
-\frac{c_{\infty}p_{2}}{C_{2} \langle W^{p_{2}}, \Lambda W \rangle}
\lim_{n\to \infty} \rho_{n}
,
\end{equation}
which together with \eqref{proof-eq-1} implies \eqref{21/4/8/11:31}.

Let us prove \eqref{proof-eq-1}. Observe from the definition of $\rho_{n}$ (see \eqref{22/1/5/15:41}) that 
\begin{equation}\label{20/12/26/12:16}
\begin{split}
&
\Big| 
\rho_{n}- \frac{(d+2)-(d-2)p_{2}}{4}\frac{C_{2}}{p_{2}} \langle W^{p_{2}}, \Lambda W \rangle c_{\infty}
\Big|
\\[6pt]
&\lesssim 
\Big|  \langle 
 \frac{g(\mu_{n}M_{\omega_{n}}Q_{n})}{ (\mu_{n}M_{\omega_{n}})^{p_{2}}}
, \widetilde{z}_{n} \rangle
-
\frac{C_{2}}{p_{2}} \langle W^{p_{2}},\widetilde{z}_{n}  \rangle 
-
\frac{C_{2}}{p_{2}} \langle W^{p_{2}}, c_{\infty}\Lambda W -\widetilde{z}_{n} 
\rangle \Big| 
\\[6pt]
&\quad +
\Big|
\langle \frac{g'(\mu_{n}M_{\omega_{n}}Q_{n})}{ (\mu_{n}M_{\omega_{n}})^{p_{2}-1}} Q_{n}
, \widetilde{z}_{n} \rangle
-
\frac{C_{2}}{p_{2}} \langle W^{p_{2}}, \widetilde{z}_{n}  \rangle 
-
\frac{C_{2}}{p_{2}} \langle W^{p_{2}}, c_{\infty}\Lambda W -\widetilde{z}_{n} 
\rangle
\Big| 
\\[6pt]
&\lesssim  
| \langle W^{p_{2}}, \widetilde{z}_{n}-c_{\infty}\Lambda W  
\rangle |
+
\Big| 
\langle 
\frac{g(\mu_{n}M_{\omega_{n}}Q_{n})}{(\mu_{n} M_{\omega_{n}})^{p_{2}}}
-
\frac{C_{2}}{p_{2}}W^{p_{2}}, \widetilde{z}_{n} 
\rangle 
\Big|
\\[6pt]
&\quad +
\Big| \langle 
\frac{g'(\mu_{n}M_{\omega_{n}}Q_{n})}{(\mu_{n} M_{\omega_{n}})^{p_{2}-1}} Q_{n}
-
C_{2}W^{p_{2}}
,\widetilde{z}_{n} \rangle 
\Big| 
.
\end{split}
\end{equation}
Consider the first term on the right-hand side of \eqref{20/12/26/12:16}. 
 Note that for any $0<\varepsilon < \min\{ (d-2)p_{2}-2, d-2 \}$, it holds that $\dfrac{dp_{2}}{2+\varepsilon} > \dfrac{d}{d-2}$ and $W^{p_{2}}\in L^{\frac{d}{2+\varepsilon}}(\mathbb{R}^{d})$. Then, we see from \eqref{22/1/8/13:57} and \eqref{eq:4.33} that 
\begin{equation}\label{20/12/26/12:19}
\lim_{n\to \infty}
| \langle  
W^{p_{2}}, \widetilde{z}_{n}-c_{\infty}\Lambda W  
\rangle |
=0
.
\end{equation}
Consider the second term on the right-hand side of \eqref{20/12/26/12:16}. 
 We shall show that 
\begin{equation}\label{22/1/9/12:6}
\lim_{n\to \infty}
\Big| 
\langle 
\frac{g(\mu_{n}M_{\omega_{n}}Q_{n})}{(\mu_{n} M_{\omega_{n}})^{p_{2}}} 
-
\frac{C_{2}}{p_{2}}W^{p_{2}}
,\widetilde{z}_{n} \rangle 
\Big| 
=0
.
\end{equation}
First, observe that 
\begin{equation}\label{22/1/8/16:25}
\begin{split} 
&
\Big| 
\langle 
\frac{
g(\mu_{n}M_{\omega_{n}}Q_{n}) }{(\mu_{n} M_{\omega_{n}})^{p_{2}}}
-
\frac{C_{2}}{p_{2}}W^{p_{2}}
,\widetilde{z}_{n} \rangle 
\Big| 
\\[6pt]
&\le 
\int_{\mathbb{R}^{d}}
\Big|
\frac{g(\mu_{n}M_{\omega_{n}}Q_{n})
}{\{ \mu_{n}M_{\omega_{n}}Q_{n}\}^{p_{2}}} 
-
\frac{C_{2}}{p_{2}}
\Big| 
Q_{n}^{p_{2}} | \widetilde{z}_{n} |
\,dx 
+
\frac{C_{2}}{p_{2}}
| \langle 
Q_{n}^{p_{2}}  
-
W^{p_{2}}
, \widetilde{z}_{n} \rangle |
.
\end{split}
\end{equation}
By \eqref{21/12/25/15:11}, $Q_{n}$ being positive,  \eqref{21/10/14/15:22}, Lemma \ref{theorem:3.1}, $\|Q_{n}\|_{L^{\infty}}\lesssim 1$ and 
 $\|\widetilde{z}_{n}\|_{L^{\infty}}= 1$ (see \eqref{18/09/12/07:19}), 
 we see that 
\begin{equation}\label{22/1/8/18:25}
\lim_{n\to \infty}
\Big|
\frac{g(\mu_{n}M_{\omega_{n}}Q_{n})
}{\{ \mu_{n}M_{\omega_{n}}Q_{n}\}^{p_{2}}} 
-
\frac{C_{2}}{p_{2}}
\Big| Q_{n}^{p_{2}} | \widetilde{z}_{n} |=0
\qquad 
\mbox{almost everywhere in $\mathbb{R}^{d}$}
. 
\end{equation}
Furthermore, by \eqref{21/12/24/13:39}, \eqref{19/01/01/12:18}, \eqref{19/01/01/08:23} and $\frac{2}{d-2}<p_{1}< p_{2}$, we see  that 
\begin{equation}\label{22/1/8/18:27}
\begin{split}
\Big|
\frac{g(\mu_{n}M_{\omega_{n}}Q_{n})
}{\{ \mu_{n}M_{\omega_{n}}Q_{n}\}^{p_{2}}} 
-
\frac{C_{2}}{p_{2}}
\Big| 
Q_{n}^{p_{2}} | \widetilde{z}_{n} |
&\lesssim 
\{Q_{n}^{p_{1}}+ Q_{n}^{p_{2}}\} | \widetilde{z}_{n} |
\\[6pt]
&\lesssim  
(1+|x|)^{-(d-2)(p_{1}+1)}
\in L^{1}(\mathbb{R}^{d})
.
\end{split} 
\end{equation} 
Hence, the Lebesgue's dominated convergence theorem shows that 
 the first term on the right-hand side of \eqref{22/1/8/16:25} obeys 
\begin{equation}\label{18/12/08/16:59}
\lim_{n\to \infty}
\int_{\mathbb{R}^{d}}
\Big|
\frac{g(\mu_{n}M_{\omega_{n}}Q_{n})
}{\{ \mu_{n}M_{\omega_{n}}Q_{n}\}^{p_{2}}} 
-
\frac{C_{2}}{p_{2}}
\Big| 
Q_{n}^{p_{2}} | \widetilde{z}_{n} |
\,dx =0.
\end{equation}
Move on to the second term on the right-hand side of \eqref{22/1/8/16:25}.
 Fix $0<\varepsilon < 1$ with $2\varepsilon <(d-2)p_{2}-2 $. Note that $\frac{d(p_{2}-1)}{4-d+2\varepsilon} > \frac{d}{d-2}$. 
 Then, by the fundamental theorem of calculus, 
 H\"older's inequality, \eqref{20/12/28/10:26} and \eqref{19/01/01/08:23}, we see that 
\begin{equation}\label{22/1/9/12:9}
\begin{split}
\lim_{n\to \infty}
| \langle 
Q_{n}^{p_{2}}  
-
W^{p_{2}}
, \widetilde{z}_{n} \rangle |
&\lesssim  
\lim_{n\to \infty}
\|W+|\zeta_{n}| \|_{L^{\frac{d(p_{2}-1)}{4-d+2\varepsilon}}}^{p_{2}-1}
\| \zeta_{n} \|_{L^{\frac{d}{d-2-\varepsilon}}}
\| \widetilde{z}_{n} \|_{L^{\frac{d}{d-2-\varepsilon}}}
=0
.
\end{split} 
\end{equation}
Then, \eqref{22/1/9/12:6} follows from  \eqref{22/1/8/16:25}, \eqref{18/12/08/16:59} and \eqref{22/1/9/12:9}. Similarly, we can prove that the last term on the right-hand side of \eqref{20/12/26/12:16} obeys 
\begin{equation}\label{22/1/9/14:53}
\lim_{n\to \infty}
\Big| \langle 
\frac{g'(\mu_{n}M_{\omega_{n}}Q_{n})}{(\mu_{n} M_{\omega_{n}})^{p_{2}-1}} 
Q_{n}
-
C_{2}W^{p_{2}}
,\widetilde{z}_{n} \rangle 
\Big| 
=0
.
\end{equation}
Then, \eqref{proof-eq-1} follows from \eqref{20/12/26/12:16}, \eqref{20/12/26/12:19}, \eqref{22/1/9/12:6} and \eqref{22/1/9/14:53}.  


\noindent 
{\bf Proof of Claim 4.}~We shall prove \eqref{21/4/5/10:16}.
  By  $V \Lambda W = -(-\Delta) \Lambda W$, 
 the second resolvent equation and Lemma \ref{21/1/10/15:37}, 
 we see that 
\begin{equation}\label{21/4/5/10:17}
\begin{split}
&
|
\langle  (-\Delta +s_{n})^{-1} V \Lambda W, V\Lambda W \rangle
+
\langle \Lambda W, V\Lambda W \rangle  | 
\\[6pt]
&=
|
\langle  (-\Delta +s_{n})^{-1} V \Lambda W,V\Lambda W \rangle
-
\langle (-\Delta)^{-1} V\Lambda W,V\Lambda W \rangle  
| 
\\[6pt]
&= 
s_{n}  | \langle (-\Delta +s_{n})^{-1}  (-\Delta)^{-1} V\Lambda W, V\Lambda W
\rangle |
=
s_{n} | 
\langle (-\Delta +s_{n})^{-1}\Lambda W,V\Lambda W
\rangle |
\\[6pt]
&\le 
s_{n}   \| (-\Delta +s_{n})^{-1}  \Lambda W\|_{L^{\infty}}
\| V\Lambda W \|_{L^{1}}
\lesssim  
s_{n} \delta(s_{n} )^{-1}  
,
\end{split}
\end{equation}
which together with $V:=-\frac{d+2}{d-2}W^{\frac{4}{d-2}}<0$ (see \eqref{20/9/13/9:52}) implies \eqref{21/4/5/10:16}.


\noindent 
{\bf Proof of Claim 5.}~We shall prove \eqref{21/4/5/10:25}. To this end, it suffices to show that for each $0< \varepsilon<1$, 
\begin{equation}\label{22/2/8/2:8}
\liminf_{n\to \infty}\delta(s_{n})\{(-\Delta+s_{n})^{-1}Q_{n}\}(0)
\ge \frac{1-\varepsilon}{2} A_{0}
.
\end{equation}
Indeed, by Lemma \ref{22/1/25/7:1}, \eqref{19/01/01/12:18}, \eqref{22/2/8/2:8} and $A_{1}:=\mathscr{C}_{0}A_{0}$ (see \eqref{20/12/22/13:58}), we see that 
\begin{equation}\label{22/3/5/10}
\begin{split}
\delta(s_{n}) \langle 
(-\Delta +s)^{-1} Q_{n}, V\Lambda W \rangle 
&=
\delta(s_{n})
\mathscr{C}_{0} \{(-\Delta +s)^{-1}Q_{n}\}(0)
+ 
\delta(s_{n})
\| Q_{n}\|_{L^{2^{*}}}
\\[6pt]
&\ge 
\mathscr{C}_{0} \frac{1-\varepsilon}{2} A_{0}
+o_{n}(1)
=
\frac{1-\varepsilon}{2} A_{1}+o_{n}(1),
\end{split} 
\end{equation}
which implies \eqref{21/4/5/10:25}. Let us prove \eqref{22/2/8/2:8}. Assume $d=3$. Put $Y_{n}:=\sqrt{3}e^{-\sqrt{s_{n}}|x|}|x|^{-1}$. 
 Note that 
\begin{equation}\label{22/3/1/13:23}
\Delta Y_{n} -s_{n}Y_{n} 
=0
\qquad 
\mbox{in $\mathbb{R}^{3}\setminus \{0\}$}
.
\end{equation}
Let $\varepsilon_{0}>0$ and $R>0$. Then, the convergence of $\{\widetilde{\Phi}_{\omega}\}$ to $W$ in $C_{\rm{loc}}^{2}(\mathbb{R}^{d})$ (see Lemma \ref{theorem:3.1}) together with $Q_{n}=T_{\mu_{n}}[ \widetilde{\Phi}_{\omega_{n}}]$ and $\mu_{n}=1+o_{n}$ shows that  there exists a number $n(\varepsilon_{0}, R)$ such that if $n\ge n(\varepsilon_{0}, R)$, then  
\begin{equation}\label{22/3/1/18:8}
Q_{n}(R)
\ge 
(1-\varepsilon_{0}) W(R) 
\ge 
 \frac{(1-\varepsilon_{0})R}{\sqrt{3+R^{2}}} 
Y_{n}(R).
\end{equation}
Put  
\begin{equation}\label{22/3/1/15:41}
\varepsilon :=1-\frac{(1-\varepsilon_{0})R}{\sqrt{3+R^{2}}} >0
.
\end{equation}
Note that taking $\varepsilon_{0}$ sufficiently small and $R$ sufficiently large, we can take $\varepsilon$ as small as we want. 
By \eqref{22/3/1/13:23}, \eqref{21/5/12/5:26}, the positivity of $g$ on $(0,\infty)$ (see \eqref{22/3/3/8:40}) and the definition of $t_{n}$ (see \eqref{22/1/3/11:23}), we see that 
\begin{equation}\label{22/3/1/13:34}
\Delta \{ (1-\varepsilon) Y_{n}-Q_{n}\} -s_{n}\{ (1-\varepsilon)Y_{n}-Q_{n}\} 
=
Q_{n}^{5}
+
\{\mu_{n}M_{n}\}^{-5}g(\mu_{n}M_{\omega_{n}}Q_{n})
\ge 0
.
\end{equation}
Then, the maximum principle together with \eqref{22/3/1/18:8}  and \eqref{22/3/1/13:34} shows that 
\begin{equation}\label{22/3/1/14:42}
\chi_{\ge R}Q_{n}
\ge 
(1-\varepsilon) 
\chi_{\ge R} Y_{n}
\qquad 
\mbox{for all $n\ge n(\varepsilon_{0},R)$},
\end{equation}
where $\chi_{\ge R}$ is the indicator function of $\{|x|\ge R\}$, namely $\chi_{\ge R}(x)=0$ if $|x|\le R$ and $\chi_{\ge R}(x)=1$ if $|x|\ge R$. 
 By \eqref{22/3/1/14:42}, $4\pi e^{-\sqrt{s_{n}}|x|}|x|^{-1}$ being the Green's function for $-\Delta +s_{n}$ in three dimensions (see, e.g., Theorem 6.23 of \cite{Lieb-Loss}), the substitution of variables, and $Q_{n}$ being positive,  we see that if $n\ge n(\varepsilon_{0},R)$, then 
\begin{equation}\label{22/2/8/10:11}
\begin{split}
&
\delta(s_{n}) \{ (-\Delta +s_{n} )^{-1}Q_{n}\}(0)
\\[6pt]
&= 
\delta(s_{n}) \{ (-\Delta +s_{n} )^{-1} (1-\chi_{\ge R})Q_{n}\}(0)
+
\delta(s_{n}) \{ (-\Delta +s_{n} )^{-1} \chi_{\ge  R}Q_{n}\}(0)
\\[6pt]
&\ge 
(1-\varepsilon) 
\delta(s_{n}) \{ (-\Delta +s_{n} )^{-1}\chi_{\ge R}Y_{n} \}(0)
\\[6pt]
&= 
(1-\varepsilon) 
 \delta(s_{n}) \int_{|x|\ge R} 
\frac{e^{-\sqrt{s_{n}}|x|}}{4\pi |x|} \sqrt{3} e^{-\sqrt{s_{n}}|x|}|x|^{-1}\,dx \\[6pt]
&=
(1-\varepsilon) 
\sqrt{3}
\int_{|x|\ge \sqrt{s_{n}}R} \frac{e^{-2|x|}}{4\pi |x|^{2}}
\,dx  
=
(1-\varepsilon) 
\frac{\sqrt{3}}{2}
\int_{|x|\ge 2 \sqrt{s_{n}}R} \frac{e^{-|x|}}{4\pi |x|^{2}}
\,dx  
\end{split}
\end{equation}
It follows from \eqref{22/2/8/10:11} that 
\begin{equation}\label{22/3/1/15:39}
\begin{split} 
\lim_{n\to \infty}
\delta(s_{n}) \{ (-\Delta +s_{n} )^{-1}Q_{n}\}(0)
\ge 
\frac{1-\varepsilon}{2}
\sqrt{3}
\int_{\mathbb{R}^{3}}
\frac{e^{-|x|}}{4\pi |x|^{2}}
\,dx 
=\frac{1-\varepsilon}{2}
A_{0}
.
\end{split} 
\end{equation}
Thus, we have proved \eqref{22/2/8/2:8} and therefore the claim \eqref{21/4/5/10:25} is true.


\noindent 
{\bf Proof of Claim 6.}~We shall prove \eqref{22/2/22/16:31}.  
 By $Q_{n}=W+\zeta_{n}$ and Lemma \ref{20/12/22/13:55}, we see that  
\begin{equation}\label{21/5/6/7:2}
\begin{split}
&
| \delta(s_{n}) \langle  (-\Delta +s_{n})^{-1}
 Q_{n},V \Lambda W \rangle -A_{1} |
\le 
o_{n}(1)
+
| 
\delta(s_{n}) \langle  (-\Delta +s_{n})^{-1}
 \zeta_{n}, V \Lambda W \rangle |
. 
\end{split} 
\end{equation}
Hence, for \eqref{22/2/22/16:31}, it suffices to show that 
\begin{equation}\label{22/2/23/17:18}
\lim_{n\to \infty}
| \delta(s_{n}) \langle  (-\Delta +s_{n})^{-1}
 \zeta_{n}, V \Lambda W \rangle | =0
.
\end{equation}
Assume $d=4$. Let $0< \varepsilon_{1} < \min\{ (d-2)p_{1}-2,1 \}$.  Then, by $V \Lambda W = \Delta \Lambda W$ (see \eqref{20/9/13/9:52}), H\"older's inequality, and \eqref{18/11/24/13:56} in Lemma \ref{18/11/05/10:29},  
 we see that 
\begin{equation}\label{21/5/6/7:3}
\begin{split}
&
\delta(s_{n}) 
| \langle  (-\Delta +s_{n})^{-1} \zeta_{n}, V \Lambda W \rangle |
=
\delta(s_{n}) 
| \langle   
\Delta \zeta_{n},
 (-\Delta +s_{n})^{-1} \Lambda W \rangle |
\\[6pt]
&\le 
\delta(s_{n}) 
\| \Delta \zeta_{n} \|_{L^{\frac{d}{2+\varepsilon_{1}}}}
\| (-\Delta +s_{n})^{-1}  \Lambda W  \|_{L^{\frac{d}{d-2-\varepsilon_{1}}}}
\\[6pt]
&\lesssim
\delta(s_{n}) 
\| \Delta \zeta_{n} \|_{L^{\frac{d}{2+\varepsilon_{1}}}}
s_{n}^{\frac{\varepsilon_{1}}{2}-1}
\| \Lambda W  \|_{L_{\rm{weak}}^{\frac{d}{d-2}}}
\lesssim
\delta(s_{n}) s_{n}^{\frac{\varepsilon_{1}}{2}-1}
\| \Delta \zeta_{n} \|_{L^{\frac{d}{2+\varepsilon_{1}}}}
,
\end{split}
\end{equation}
where the implicit constants may depend on $\varepsilon_{1}$.
Note here that $\frac{dp_{1}}{2+\varepsilon_{1}}>\frac{d}{d-2}$ and 
 $\frac{16d}{(d-2)(8+3 \varepsilon_{1})}>\frac{d}{d-2}$. 
 Hence, by \eqref{21/5/12/5:26}, \eqref{22/1/9/17:50}, the fundamental theorem of calculus, H\"older's inequality,  
 \eqref{19/01/01/12:18}, \eqref{20/12/28/10:26} with $\varepsilon=\frac{\varepsilon_{1}}{8}$, and $t_{n} \sim \delta(s_{n})^{-1} s_{n}$ (see \eqref{21/4/28/11:48}), we see that 
\begin{equation}\label{21/5/12/5:45}
\begin{split}
&
\|\Delta \zeta_{n} \|_{L^{\frac{d}{2+\varepsilon_{1}}}}
=
\|\Delta \{Q_{n} -W\} \|_{L^{\frac{d}{2+\varepsilon_{1}}}}
=
\|\Delta Q_{n} +W^{\frac{d+2}{d-2}} \|_{L^{\frac{d}{2+\varepsilon_{1}}}}
\\[6pt]
&\lesssim 
s_{n}\| Q_{n}\|_{L^{\frac{d}{2+\varepsilon_{1}}}}
+
\| (W+|\zeta_{n}|)^{\frac{4}{d-2}} \zeta_{n}  \|_{L^{\frac{d}{2+\varepsilon_{1}}}}
+
t_{n}\| Q_{n}^{p_{1}} \|_{L^{\frac{d}{2+\varepsilon_{1}}}}
\\[6pt]
&\le 
s_{n}\| Q_{n}\|_{L^{\frac{d}{2+\varepsilon_{1}}}}
+
\| W+|\zeta_{n}| \|_{L^{\frac{16d}{(d-2)(8+ 3\varepsilon_{1})}}}^{\frac{4}{d-2}} 
\|\zeta_{n}\|_{L^{\frac{4d}{\varepsilon_{1}}}}
+
t_{n}\| Q_{n} \|_{L^{\frac{dp_{1}}{2+\varepsilon_{1}}}}^{p_{1}}
\\[6pt]
&\lesssim 
s_{n}\| Q_{n}\|_{L^{\frac{d}{2+\varepsilon_{1}}}}
+
s_{n}^{\frac{d-2}{2}-\frac{\varepsilon_{1}}{4}}
+
\delta(s_{n})^{-1} s_{n}
, 
\end{split}
\end{equation}
where the implicit constants  may depend on $\varepsilon_{1}$. 
 Furthermore, by the interpolation inequality (H\"older's inequality), \eqref{21/3/31/10:3} and \eqref{21/5/12/9:35}, 
 we see that 
\begin{equation}\label{21/5/12/9:45}
s_{n}\| Q_{n}\|_{L^{\frac{d}{2+\varepsilon_{1}}}}
\le 
s_{n}
\| Q_{n}\|_{L^{1}}^{\frac{4-d+2\varepsilon_{1}}{d}}
\| Q_{n}\|_{L^{2}}^{\frac{2(d-2-\varepsilon_{1})}{d}}
\lesssim
s_{n}^{\frac{2(d-2-\varepsilon_{1})}{d}}
\delta(s_{n})^{\frac{-(d-2-\varepsilon_{1})}{d}}
,
\end{equation}
where the implicit constant may depend on $\varepsilon_{1}$. 
 Plugging \eqref{21/5/12/9:45} into \eqref{21/5/12/5:45}, we obtain 
\begin{equation}\label{21/5/12/5:40}
\|\Delta \zeta_{n}\|_{L^{\frac{d}{2+\varepsilon_{1}}}}
\lesssim 
\delta(s_{n})^{\frac{-(d-2-\varepsilon_{1})}{d}}
s_{n}^{\frac{2(d-2-\varepsilon_{1})}{d}}
+
s_{n}^{\frac{d-2}{2}-\frac{\varepsilon_{1}}{4}}
+
\delta(s_{n})^{-1} s_{n}
,
\end{equation}
where the implicit constant may depend on $\varepsilon_{1}$. 
 Then, it follows from \eqref{21/5/6/7:3}, \eqref{21/5/12/5:40}, the definition of $\delta(s)$ (see \eqref{19/01/27/16:58}) and $d=4$ that 
\begin{equation}\label{22/3/5/9:2}
\begin{split}
&
\delta(s_{n}) 
| \langle  (-\Delta +s_{n})^{-1} \zeta_{n}, V \Lambda W \rangle |
\\[6pt]
&\lesssim 
s_{n}^{1-\frac{4}{d}+\frac{\varepsilon_{1}}{2}-\frac{2\varepsilon_{1}}{d}} 
\delta(s_{n})^{\frac{2+\varepsilon_{1}}{d}}
+
\delta(s_{n})
s_{n}^{\frac{d-4}{2}+\frac{\varepsilon_{1}}{4}}
+
s_{n}^{\frac{\varepsilon_{1}}{2}}
\to 0
\quad 
\mbox{as $n\to \infty$}
.
\end{split}
\end{equation}
Thus, we have proved \eqref{22/2/23/17:18} and therefore 
 the claim \eqref{22/2/22/16:31} is true. 


\noindent 
{\bf Proof of Claim 7.}~We shall prove \eqref{21/4/9/10:48}. 

It follows from $-\Delta+s_{n}+V =\{1+ V (-\Delta +s_{n})^{-1} \}
(-\Delta+ s_{n})$ that  \eqref{eq:4.30} is written as   
\begin{equation}\label{proof-eq-9}
\widetilde{z}_{n} 
=
(- \Delta + s_{n})^{-1}  
\{1+ V (- \Delta + s_{n})^{-1} \}^{-1} 
\widetilde{V}_{n} \widetilde{z}_{n}
,
\end{equation}
where 
\begin{equation}\label{21/1/6/11:29}
\begin{split}
\widetilde{V}_{n}
&:=
V 
+
\frac{d+2}{d-2}
Q_{n}^{\frac{4}{d-2}} 
+
\{\mu_{n}M_{\omega_{n}}\}^{-\frac{4}{d-2}}
g'(\mu_{n}M_{\omega_{n}}Q_{n}) 
\\[6pt]
&=
\frac{d+2}{d-2} 
\big\{ Q_{n}^{\frac{4}{d-2}} - W^{\frac{4}{d-2}} \big\} 
+
\{\mu_{n}M_{\omega_{n}}\}^{-\frac{4}{d-2}}
g'(\mu_{n}M_{\omega_{n}}Q_{n}) .
\end{split} 
\end{equation}
By \eqref{proof-eq-9}, and the adjoint operator of 
$\{1+ V (- \Delta + s_{n})^{-1} \}^{-1}$ on $L^{2}_{\rm{real}}(\mathbb{R}^{d})$ being $\{1+ (- \Delta + s_{n})^{-1}V \}^{-1}$,   
 we see that 
\begin{equation}\label{21/4/26/10:15}
\langle Q_{n}^{\perp}, \widetilde{z}_{n} \rangle
=
\langle 
\{1+ (- \Delta + s_{n})^{-1}V \}^{-1} 
(-\Delta+ s_{n})^{-1}Q_{n}^{\perp}, \widetilde{V}_{n} \widetilde{z}_{n}
\rangle
. 
\end{equation}
Furthermore, by \eqref{21/4/26/10:15}, H\"older's inequality, \eqref{18/11/11/15:50} in Proposition \ref{18/11/17/07:17}, \eqref{18/09/12/07:26} and $p_{1}+1>\frac{d}{d-2}$ (see Assumption \ref{21/12/23/14:45}), we see that
\begin{equation}\label{21/4/26/10:10}
\begin{split}
| \langle Q_{n}^{\perp}, \widetilde{z}_{n} \rangle |
&\le 
\| 
\{1+ (- \Delta + s_{n})^{-1}V \}^{-1} 
(-\Delta+ s_{n})^{-1}Q_{n}^{\perp}
\|_{L^{p_{1}+1}}
\| \widetilde{V}_{n} \widetilde{z}_{n} \|_{L^{\frac{p_{1}+1}{p_{1}}}}
\\[6pt]
&\lesssim
\| 
(-\Delta+ s_{n})^{-1}Q_{n}^{\perp}
\|_{L^{p_{1}+1}}
\| \widetilde{V}_{n} \widetilde{z}_{n} \|_{L^{\frac{p_{1}+1}{p_{1}}}}
.
\end{split}
\end{equation}
Consider the first factor on the right-hand side of \eqref{21/4/26/10:10}. 
 First, observe from \eqref{21/4/5/10:16}, $V\Lambda W =\Delta \Lambda W$, Lemma \ref{21/1/10/15:37} and \eqref{22/2/24/18:55} that 
\begin{equation}\label{22/2/24/18:5}
\begin{split}
|\tau_{n}|
&\lesssim 
| \langle  (-\Delta + s_{n})^{-1} Q_{n},V \Lambda W \rangle | 
= 
| \langle  \Delta Q_{n}, 
(-\Delta + s_{n})^{-1}  \Lambda W \rangle |
\\[6pt]
&\le 
\|\Delta  Q_{n}\|_{L^{1}} 
\| (-\Delta + s_{n})^{-1} \Lambda W \|_{L^{\infty}} 
\lesssim 
\delta(s_{n})^{-1} 
. 
\end{split} 
\end{equation}
Then, by \eqref{18/09/12/07:25}, \eqref{18/11/24/13:56} in Lemma \ref{18/11/05/10:29}, Lemma \ref{18/11/23/17:17} together with $p_{1}+1>\frac{d}{d-2}$, \eqref{22/2/24/18:5} and \eqref{19/01/01/12:18}, we see that 
\begin{equation}\label{21/3/31/10:1}
\begin{split}
&
\| (-\Delta +s_{n})^{-1} Q_{n}^{\perp} \|_{L^{p_{1}+1}} 
=
\| (-\Delta +s_{n})^{-1} \{Q_{n} -\tau_{n}V\Lambda W \} \|_{L^{p_{1}+1}} 
\\[6pt]
&\le 
\| (-\Delta +s_{n})^{-1} Q_{n}  \|_{L^{p_{1}+1}}
+
|\tau_{n}| 
\| (-\Delta +s_{n})^{-1} V \Lambda W  \|_{L^{p_{1}+1}} 
\\[6pt]
&\lesssim 
s_{n}^{\frac{d-2}{2}-\frac{d}{2(p_{1}+1)}-1}
\|Q_{n}\|_{L_{\rm{weak}}^{\frac{d}{d-2}}}
+
\delta(s_{n})^{-1} \| V \Lambda W  \|_{L^{\frac{d(p_{1}+1)}{d+2(p_{1}+1)}}}
\\[6pt]
&\lesssim
s_{n}^{\frac{d-2}{2}-\frac{d}{2(p_{1}+1)}-1}
+
\delta(s_{n})^{-1}
.
\end{split} 
\end{equation}
Consider the second factor on the right-hand side of \eqref{21/4/26/10:10}. 
 Let $0< \varepsilon <\frac{d}{2(p_{1}+1)}$. 
 Note that $W^{\frac{6-d}{d-2}}\in L^{\frac{p_{1}+1}{p_{1}-1}}(\mathbb{R}^{d})$  for $d=3,4$, and $\frac{d(p_{1}+1)}{d-2\varepsilon (p_{1}+1)}>\frac{d}{d-2}$. 
 Then, by the definition of $\widetilde{V}_{n}$ (see \eqref{21/1/6/11:29}), 
 the second inequality in \eqref{22/1/3/17:15},  H\"older's inequality,  \eqref{19/01/01/12:18}, \eqref{20/12/28/10:26}, \eqref{19/01/01/08:23} and $M_{\omega_{n}}^{p_{2}-\frac{d+2}{d-2}} 
 \sim t_{n} \sim \delta(s_{n})^{-1}s_{n}$ (see \eqref{22/1/3/11:23}, \eqref{22/1/3/14:5} and \eqref{21/4/28/11:48}), we see that   
\begin{equation}\label{21/1/6/11:47}
\begin{split} 
&\|   \widetilde{V}_{n} \widetilde{z}_{n}  
\|_{L^{\frac{p_{1}+1}{p_{1}}}} 
\lesssim 
\| ( W+|\zeta_{n}| )^{\frac{6-d}{d-2}} \zeta_{n} 
\widetilde{z}_{n}\|_{L^{\frac{p_{1}+1}{p_{1}}}} 
+ 
M_{\omega_{n}}^{p_{2}-\frac{d+2}{d-2}}  \| Q_{n}^{p_{1}-1}\widetilde{z}_{n} 
\|_{L^{\frac{p_{1}+1}{p_{1}}}} 
\\[6pt]
&\le
\| (W+|\zeta_{n}|)^{\frac{6-d}{d-2}} \|_{L^{\frac{p_{1}+1}{p_{1}-1}}}
\|\widetilde{z}_{n}\|_{L^{\frac{d(p_{1}+1)}{d-2\varepsilon (p_{1}+1)}}}
\| \zeta_{n}  \|_{L^{\frac{d}{2\varepsilon}}}
+
M_{\omega_{n}}^{p_{2}-\frac{d+2}{d-2}}
\| Q_{n}^{p_{1}-1} \|_{L^{\frac{p_{1}+1}{p_{1}-1}}}
\| \widetilde{z}_{n}\|_{L^{p_{1}+1}}
\\[6pt]
&\lesssim  
s_{n}^{\frac{d-2}{2}-\varepsilon}
+
\delta(s_{n})^{-1}s_{n}
,
\end{split} 
\end{equation}
where the implicit constants may depend on $\varepsilon$.   
 Putting \eqref{21/4/26/10:10}, \eqref{21/3/31/10:1} and \eqref{21/1/6/11:47} together, and taking a sufficiently small $\varepsilon$ depending on $d$ and $p_{1}$, we see from \eqref{19/01/27/16:58} that 
\begin{equation}\label{21/4/26/11:41}
\begin{split}
&
\delta(s_{n})
| \langle Q_{n}^{\perp}, \widetilde{z}_{n} \rangle |
\lesssim  
\delta(s_{n})
\big\{
s_{n}^{\frac{d-2}{2}-\frac{d}{2(p_{1}+1)}-1}
+
\delta(s_{n})^{-1}
\big\}
\big\{ 
s_{n}^{\frac{d-2}{2}-\varepsilon}
+
\delta(s_{n})^{-1}s_{n}
\big\}
\\[6pt]
&=
\big\{
\delta(s_{n})
s_{n}^{\frac{d-2}{2}-\frac{d}{2(p_{1}+1)}-1}
+
1 
\big\}
\big\{ 
s_{n}^{\frac{d-2}{2}-\varepsilon}
+
\delta(s_{n})^{-1}s_{n}
\big\}
\\[6pt]
&=
\delta(s_{n})
s_{n}^{d-2-\frac{d}{2(p_{1}+1)}-1-\varepsilon}
+
s_{n}^{\frac{d-2}{2}-\frac{d}{2(p_{1}+1)}}
+ 
s_{n}^{\frac{d-2}{2}-\varepsilon}
+
\delta(s_{n})^{-1}s_{n}
\to 0
\quad 
\mbox{as $n\to \infty$}.
\end{split} 
\end{equation}
Thus, we have proved \eqref{21/4/9/10:48}.


We have proved all of the claims and therefore completed the proof of {\it (i)} of Theorem \ref{20/8/17/11:39}.  
\end{proof}


\subsection{The kernel of $\boldsymbol{L_{\omega,+}}$}\label{21/10/9/13:18}
In this section, we prove the second claim {\it (ii)} of  Theorem \ref{20/8/17/11:39}. Since $\Phi_{\omega}$ is radially symmetric about $0$, we may regard it as a function of single variable on $[0,\infty)$; we also use the symbol $\Phi_{\omega}$ to denote such a single variable function. 

By \eqref{21/9/18/10:40}, what we need to prove is that 
\begin{equation}\label{21/10/10/11:58}
\operatorname{dim}{\operatorname{Ker}{L_{\omega,+}}}=d
.
\end{equation}
Although we can prove \eqref{21/10/10/11:58} by an argument similar to the proof of Theorem 0.3 in \cite{Kabeya-Tanaka}, we give a proof as the kernel contains  the components arising from the translation invariance of \eqref{eq:1.1}.

  For $d\ge 2$ and $k\ge 0$, let $N_{d,k}$ denote the dimension of the space of  spherical harmonics of degree $k$. It is known that $N_{d,0}=1$, $N_{d,1}=d$, and $N_{d,k}$ is finite for all $k\ge 2$. 
 Furthermore, for each $k\ge 0$,  let $Y_{k}^{1},\ldots, Y_{k}^{N_{d,k}}$ be mutually orthogonal spherical harmonics of degree $k$. Note that $Y_{k}^{m}$ and $Y_{\ell}^{n}$ are orthogonal if either $k \neq \ell$ or $m\neq n$, and 
\begin{equation}\label{21/9/17/11:48}
-\Delta_{S^{d-1}} Y_{k}^{m} = k(d+k-2) Y_{k}^{m} 
\qquad 
\mbox{for all $k\ge 0$ and $1\le m \le N_{d,k}$}
,  
\end{equation}
where $\Delta_{S^{d-1}}$ is the Laplace-Beltrami operator on 
 the sphere $S^{d-1}$.

Now, suppose for contradiction that \eqref{21/10/10/11:58} is false. Then, we can take a nontrivial function $f_{0} \in H^{2}(\mathbb{R}^{d})$ such that 
\begin{equation}\label{21/9/18/10:59}
L_{\omega,+}f_{0}=0,
\qquad 
\langle f_{0}, \partial_{j}\Phi_{\omega} \rangle =0
\quad 
\mbox{for all $1\le j \le d$}
.
\end{equation}
We will use the polar coordinates, namely $r=|x|$, $\sigma=\frac{x}{|x|}$ and $\sigma_{j}:=\frac{x_{j}}{|x|}$ ($1\le j \le d$) for $x\in \mathbb{R}^{d}\setminus \{0\}$. Note that the spherical harmonics are functions of $\sigma$. We may take $Y_{0}^{1}=1$ and $Y_{1}^{j} = \sigma_{j}$ for all $1\le j \le d$.
 Then, we may write $f_{0}$ as follows:
\begin{equation}\label{21/9/17/17:20}
f_{0}(x)
=
c_{0}(r) + \sum_{j=1}^{d} c_{1}^{j}(r) \sigma_{j}
+
\sum_{k=2}^{\infty} \sum_{m=1}^{N_{d,k}} c_{k}^{m}(r) Y_{k}^{m}(\sigma), 
\end{equation}
where 
\begin{equation}\label{21/9/18/11:20}
\begin{split} 
c_{0}(r)&:=\int_{S^{n-1}} f_{0}(r \sigma) d\sigma
, 
\\[6pt]  
c_{1}^{j}(r)
&:=\int_{S^{n-1}} \sigma_{j} f_{0}(r \sigma) d\sigma
\quad 
\mbox{for all $1\le j \le N_{d,1}=d$}
,
\\[6pt]
c_{k}^{m}(r)
&:=\int_{S^{n-1}} f_{0}(r \sigma) Y_{j}^{\ell}(\sigma) d\sigma
\quad 
\mbox{for all $k\ge 2$ and $1\le m \le N_{d,k}$}.
\end{split}
\end{equation}

We may write $L_{\omega,+}$ as    
\begin{equation}\label{21/10/9/13:14} 
L_{\omega,+}
=
-\frac{\partial^{2} }{\partial r^{2}}
-\frac{d-1}{r} \frac{\partial }{\partial r}
-
\frac{1}{r^{2}}\Delta_{S^{d-1}}
+
V_{\omega,+}  
\quad 
\mbox{with}~
V_{\omega,+}:=
\omega -
\frac{d+2}{d-2}\Phi_{\omega}^{\frac{4}{d-2}} 
-
g'(\Phi_{\omega})
. 
\end{equation}

Let $k\ge 0$ and $1\le m \le N_{d,k}$. 
 By $L_{\omega,+}f_{0}=0$ (see \eqref{21/9/18/10:59}), \eqref{21/9/17/17:20},  
 \eqref{21/10/9/13:14} and \eqref{21/9/17/11:48}, we see that 
\begin{equation}\label{21/9/18/11:7}
0
= 
\int_{S^{d-1}} L_{\omega,+}f_{0} (r\sigma)
Y_{k}^{m}(\sigma) 
\, d\sigma
=
A_{\omega,k} c_{k}^{m}(r) 
,
\end{equation}
where 
\begin{equation}\label{21/9/18/11:37}
A_{\omega,k}   
:=
-\frac{d^{2}}{dr^{2}} - \frac{d-1}{r} \frac{d}{dr} 
+\frac{k(d+k-2)}{r^{2}}
+
V_{\omega,+} 
.
\end{equation}
Note that $A_{\omega,0}=L_{\omega,+}|_{H_{\rm{rad}}^{2}(\mathbb{R}^{d})}$. 
 Hence, the nondegeneracy of $\Phi_{\omega}$ in $H_{\rm{rad}}^{1}(\mathbb{R}^{d})$ (see the claim {\it (i)} of Theorem \ref{20/8/17/11:39}) shows that $c_{0}$ must be trivial. 
 
 We shall show that $c_{k}^{m}$ are trivial for all $k\ge 1$ and $1\le m \le N_{d,k}$. To this end, put $\gamma_{k}^{m}:=r^{\frac{d-1}{2}}c_{k}^{m}$. Then, we may write \eqref{21/9/18/11:7} as 
\begin{equation}\label{21/9/27/14:9}
\frac{d^{2}\gamma_{k}^{m}}{dr^{2}}
=
\Big\{
\frac{(d-1)(d-3)}{4r^{2}}
+
\frac{k(d+k-2)}{r^{2}} 
+
V_{\omega,+}(r)
\Big\} \gamma_{k}^{m} 
.
\end{equation}
Note that $\gamma_{k}^{m}(0)=0$, and $\gamma_{k}^{m}(r)\to 0$ exponentially as $r\to \infty$ (see, e.g., the proof of Lemma 2 of \cite{Berestycki-Lions}). Furthermore, the standard theory of linear ODEs shows that the zeros of $\gamma_{k}^{m}$ are isolated.

In order to derive a contradiction, it suffices to show that $\gamma_{k}^{m}$ is trivial for all $k\ge 1$.  Let $k\ge 1$ and $1\le m \le N_{d,k}$. Furthermore, let $a_{k,m}>0$ be the second zero of $\gamma_{k}^{m}$; The first zero is $0$, and we regard $a_{k,m}=\infty$ if $\gamma_{k,m}$ has no zero in $(0,\infty)$. 

Suppose for contradiction that $\gamma_{k}^{m}$ is nontrivial. 
 Then, considering $-\gamma_{k}^{m}$ instead of $\gamma_{k}^{m}$ if necessary, 
 we may assume that 
\begin{equation}\label{21/9/24/6:1}
\gamma_{k}^{m}(r)>0~~\mbox{for all $0< r <a_{k}^{m}$},
\quad 
\frac{d\gamma_{k}^{m}}{dr}(0)\ge 0 ,
\quad 
\frac{d\gamma_{k}^{m}}{dr}(a_{k}^{m})< 0
~~\mbox{if $a_{k}^{m} \neq \infty$}
.
\end{equation}  
Note that $\dfrac{d\gamma_{k}^{m}}{dr}(a_{k}^{m})\neq 0$, as otherwise $\gamma_{k}^{m}$ becomes trivial.  

Since $\Phi_{\omega}$ is strictly decreasing as a function of $|x|$,  we see that 
\begin{equation}\label{21/9/24/7:31}
\dfrac{d\Phi_{\omega}}{dr}(r)<0 \quad 
\mbox{for all $r>0$}
.  
\end{equation}
Observe that 
\begin{equation}\label{21/9/24/7:17}
\partial_{j}\Phi_{\omega}(x) 
=
\sigma_{j} \frac{d\Phi_{\omega}}{dr}(r)
.
\end{equation}
By \eqref{21/9/18/10:40}, \eqref{21/10/9/13:14}, \eqref{21/9/24/7:17} and \eqref{21/9/17/11:48}, we see that 
\begin{equation}\label{21/9/26/18:25}
A_{\omega,1}\frac{d\Phi_{\omega}}{dr} 
=
-\frac{d^{3}\Phi_{\omega}}{dr^{3}}
-\frac{d-1}{r}\frac{d^{2}\Phi_{\omega}}{dr^{2}}
+\frac{d-1}{r^{2}}\frac{d\Phi_{\omega}}{dr}
+V_{\omega,+}\frac{d\Phi_{\omega}}{dr}
=0. 
\end{equation}
By \eqref{21/9/27/14:9}, integration by parts and \eqref{21/9/26/18:25}, 
 we see that 
\begin{equation}\label{21/10/10/16:23}
\begin{split} 
&
\int_{0}^{a_{k}^{m}} 
\Big\{
\frac{(d-1)(d-3)}{4r^{2}}
+
\frac{k(d+k-2)}{r^{2}} 
+
V_{\omega,+}(r)
\Big\} \gamma_{k}^{m} \frac{d\Phi_{\omega}}{dr} r^{\frac{d-1}{2}} \,dr
\\[6pt]
&=
\xi(a_{k}^{m}) -
\int_{0}^{a_{k}^{m}}
\frac{d \gamma_{m}^{k}}{dr}
\frac{d^{2}\Phi_{\omega}}{dr^{2}} r^{\frac{d-1}{2}} \,dr 
-
\int_{0}^{a_{k}^{m}}\frac{d \gamma_{m}^{k}}{dr}
\frac{d \Phi_{\omega}}{dr} \frac{d-1}{2} r^{\frac{d-3}{2}} \,dr 
\\[6pt]
&=
\xi(a_{k}^{m}) +  
\int_{0}^{a_{k}^{m}} \gamma_{m}^{k} 
\frac{d^{3}\Phi_{\omega}}{dr^{3}} r^{\frac{d-1}{2}} \,dr 
+
\int_{0}^{a_{k}^{m}}
\gamma_{m}^{k} 
\frac{d^{2}\Phi_{\omega}}{dr^{2}} \frac{d-1}{2r}r^{\frac{d-1}{2}} \,dr 
\\[6pt]
&\quad +
\int_{0}^{a_{k}^{m}} \gamma_{m}^{k}
\frac{d^{2} \Phi_{\omega}}{dr^{2}} \frac{d-1}{2r} r^{\frac{d-1}{2}} \,dr 
+
\int_{0}^{a_{k}^{m}} \gamma_{m}^{k}
\frac{d \Phi_{\omega}}{dr} \frac{(d-1)(d-3)}{4r^{2}} r^{\frac{d-1}{2}} \,dr 
\\[6pt]
&=
\xi(a_{k}^{m})
 +
\int_{0}^{a_{k}^{m}} 
\Big\{ 
\frac{d-1}{r^{2}}
+
\frac{(d-1)(d-3)}{4r^{2}}
+
V_{\omega,+} 
\Big\} 
\gamma_{m}^{k}
\frac{d \Phi_{\omega}}{dr} r^{\frac{d-1}{2}} \,dr 
,
\end{split}
\end{equation}
where 
\begin{equation}\label{21/10/11/9:51}
\xi(a_{k}^{m}):=\Big( 
\frac{d \gamma_{m}^{k}}{dr}(r) 
\frac{d\Phi_{\omega}}{dr}(r) r^{\frac{d-1}{2}}\Big)\Big|_{r=a_{k}^{m}}
.
\end{equation}
Hence, we see that 
\begin{equation}\label{21/10/11/10:3}
\int_{0}^{a_{k}^{m}} 
\frac{k(d+k-2)- (d-1)}{r^{2}} 
\gamma_{m}^{k}
\frac{d \Phi_{\omega}}{dr} r^{\frac{d-1}{2}} \,dr 
=
\xi(a_{k}^{m})
.
\end{equation}
Note that 
\begin{equation}\label{21/10/11/11}
\frac{k(d+k-2)- (d-1)}{r^{2}}
> 0
\quad 
\mbox{for all $d\ge 3$ and $k\ge 2$}
.
\end{equation}
Observe from \eqref{21/9/24/6:1} and \eqref{21/9/24/7:31} that 
\begin{align}
\label{21/10/11/9:55}
&
\gamma_{k}^{m}(r) \frac{d\Phi_{\omega}}{dr}(r) r^{\frac{d-1}{2}} <0
\quad 
\mbox{for all $0<r <a_{k}^{m}$},
\\[6pt]
\label{21/10/11/11:40} 
& \xi(a_{k}^{m})>0~~\mbox{if $a_{k}^{m}\neq \infty$},
\qquad 
\xi(a_{k}^{m})=0~~\mbox{if $a_{k}^{m}=\infty$}
.
\end{align}
Assume either $k\ge 1$ and $a_{k}^{m}\neq \infty$, or else $k\ge 2$. 
 Then, \eqref{21/10/11/10:3} together with \eqref{21/10/11/11}, \eqref{21/10/11/9:55} and \eqref{21/10/11/11:40} shows a contradiction. It remains the case where $k= 1$ and $a_{1}^{m}= \infty$ for some $1\le m \le d$. Note that $a_{1}^{m}=\infty$ implies that $\gamma_{1}^{m}>0$ on $(0,\infty)$. 
 Moreover, by $\langle f_{0}, \partial_{m}\Phi_{\omega} \rangle =0$ (see \eqref{21/9/18/10:59}), \eqref{21/9/24/7:17} and the definition of $c_{1}^{m}(r)$ (see \eqref{21/9/18/11:20}), we see that 
\begin{equation}\label{22/3/22/10:9}
0=
\int_{0}^{\infty} 
\int_{S^{d-1}}f_{0}(r\sigma)\sigma_{m} \frac{d\Phi_{\omega}}{dr}(r)r^{d-1}\,d\sigma dr
=
\int_{0}^{\infty} c_{1}^{m}(r) \frac{d\Phi_{\omega}}{dr}(r)r^{d-1}\,dr
. 
\end{equation}
  Then, by $\gamma_{1}^{m}$ being positive, $\dfrac{d\Phi_{\omega}}{dr}<0$ (see \eqref{21/9/24/7:31}),  $\gamma_{1}^{m}(r):=r^{\frac{d-1}{2}}c_{1}^{m}(r)$ and 
\eqref{22/3/22/10:9},  we see that  
\begin{equation}\label{21/9/23/17:43}
0> \int_{0}^{\infty} 
\gamma_{1}^{m}(r) 
\frac{d\Phi_{\omega}}{dr}(r) r^{\frac{d-1}{2}}\,dr
=
0
.
\end{equation}
This is a contradiction. Thus, we have proved that $c_{k}^{m}$ is trivial for all $k\ge 0$ and $1\le m \le N_{d,k}$, which implies that $f_{0}$ is trivial. 
However, this contradicts $f_{0}$ being nontrivial. 
 Thus, the second claim {\it (ii)} of Theorem \ref{21/9/17/9:1} must be true. 


\section{Proof of Theorem \ref{21/9/17/9:1}}\label{21/10/7/10:45}

In this section, we give a proof of Theorem \ref{21/9/17/9:1}:
\begin{proof}[Proof of Theorem \ref{21/9/17/9:1}]
Let us begin with the proof of the claim {\it (i)}. 
 Suppose for contradiction that there exists a real-valued function $f_{*} \in H^{1}(\mathbb{R}^{d})$ such that 
\begin{align}
\label{21/9/14/15:30}
&
\mathcal{N}_{\omega}'(\Phi_{\omega})f_{*}=0,  
\\[6pt]
\label{21/9/14/15:31}
&
B_{\omega}(f_{*}, f_{*})=-1.
\end{align} 
Then, define the function $Z\colon \mathbb{R}^{2}\to \mathbb{R}$ by  
\begin{equation}
Z(a,b):=\Phi_{\omega}+ af_{*} + b \Phi_{\omega}
.
\end{equation} 
Note that $\mathcal{N}_{\omega}(Z(0,0))=\mathcal{N}_{\omega}(\Phi_{\omega})=0$.  Moreover, by \eqref{21/9/15/16:23}, $\Phi_{\omega}$ being a positive solution to \eqref{eq:1.1}, and the condition \eqref{22/3/6/12:26}, we see that 
\begin{equation}\label{21/9/16/13:36}
\begin{split}
&\frac{\partial}{\partial b} 
\mathcal{N}_{\omega}(Z(a, b))\Big|_{(a,b)=(0,0)}
=
\mathcal{N}_{\omega}'(\Phi_{\omega}) 
\Phi_{\omega} 
=
\langle L_{\omega,+}\Phi_{\omega}, \Phi_{\omega} \rangle
\\[6pt]
&=
-\frac{4}{d-2}\langle 
\Phi_{\omega}^{\frac{d+2}{d-2}}, \Phi_{\omega} 
\rangle
-
\langle g'(\Phi_{\omega})\Phi_{\omega}-g(\Phi_{\omega}), \Phi_{\omega} 
\rangle 
<0
.
\end{split} 
\end{equation} 
Hence, the implicit function theorem together with \eqref{21/9/14/15:30} shows 
 that there exists $a_{0}>0$ 
 and a $C^{2}$-function $h \colon (-a_{0},a_{0})\to \mathbb{R}$ such that 
\begin{align}
\label{21/9/16/13:51}
&
\mathcal{N}_{\omega}(Z(a,h(a))) =0 
\qquad 
\mbox{for all $a\in (-a_{0},a_{0})$},
\\[6pt]
\label{21/9/16/14:20}
& 
h(0)=0,
\qquad 
\frac{dh}{da}(0)=-
\frac{\mathcal{N}_{\omega}'(\Phi_{\omega})f_{*}}
{\mathcal{N}_{\omega}'(\Phi_{\omega})\Phi_{\omega}}
=0.
\end{align}
Since $\Phi_{\omega}$ is a ground state to \eqref{eq:1.1}, 
 \eqref{21/9/16/13:51} shows that 
\begin{equation}\label{21/9/16/13:55}
\mathcal{S}_{\omega}(Z(a,h(a)))
\ge 
\mathcal{S}_{\omega}(\Phi_{\omega})
\quad 
\mbox{for all $a\in (-a_{0},a_{0})$}
.
\end{equation}
Observe from $\mathcal{S}_{\omega}'(\Phi_{\omega})=0$ (see \eqref{21/12/23/15:59}) that 
\begin{equation}\label{21/9/16/15:2}
\frac{d}{da}
\mathcal{S}_{\omega}(Z(a,h(a)))\Big|_{a=0}
=
\mathcal{S}_{\omega}'(\Phi_{\omega})\{f_{*}+\frac{dh}{da}(0)\Phi_{\omega}\}
=0
.
\end{equation}
Furthermore, by \eqref{21/9/16/14:20}, \eqref{21/9/16/14:33} and \eqref{21/9/14/15:31}, 
 we see  that  
\begin{equation}\label{21/9/16/13:56}
\frac{d^{2}}{da^{2}}
\mathcal{S}_{\omega}(Z(a,h(a)))\Big|_{a=0}
=
\big[
\mathcal{S}_{\omega}''(\Phi_{\omega})\{f_{*}+\frac{dh}{da}(0)\Phi_{\omega}\}
\big] 
\{f_{*}+\frac{dh}{da}(0)\Phi_{\omega}\}
=-1
,
\end{equation}
which together with $\mathcal{S}_{\omega}$ and $h$ being $C^{2}$ implies that 
there exists $0< a_{1} <a_{0}$ such that  
\begin{equation}\label{21/9/16/16:35}
\frac{d^{2}}{da^{2}}
\mathcal{S}_{\omega}(Z(a,h(a)))
<0
\quad 
\mbox{for all $a \in (-a_{1},a_{1})$}
.
\end{equation}
Thus, Taylor's expansion of $\mathcal{S}_{\omega}(Z(a,h(a)))$ around $a=0$ together with \eqref{21/9/16/15:2} and \eqref{21/9/16/16:35} shows that 
\begin{equation}\label{21/9/16/15:15}
\mathcal{S}_{\omega}(Z(a,h(a))) - \mathcal{S}_{\omega}(\Phi_{\omega})
=
\frac{a^{2}}{2} 
\frac{d^{2}}{da^{2}}
\mathcal{S}_{\omega}(Z(a,h(a))) \Big|_{a=\theta a}
< 0
\quad 
\mbox{for all $a \in (-a_{1},a_{1})$}
,
\end{equation}
where $\theta\in (0,1)$ is some constant. However, this contradicts \eqref{21/9/16/13:55}. Thus, we have proved the claim {\it (i)}. 

We move on to the proof of the claim {\it (ii)}. It follows from $\sigma_{\rm{disc}}(L_{\omega,+})\subset (-\infty, \omega)$ (see \eqref{22/3/22/11:4}) and $\langle L_{\omega,+}\Phi_{\omega}, \Phi_{\omega} \rangle <0$ (see \eqref{21/9/16/13:36}) that $L_{\omega,+}$ has at least one negative eigenvalue. Let $E_{1}<0$ be the first eigenvalue, and let $E_{2}$ be the second one counting multiplicity
 (possibly, $E_{1}= E_{2}$). 
 Since $0\in \sigma_{\rm{disc}}(L_{\omega,+})$ (see \eqref{21/9/18/10:40}), we see  that $E_{2}\le 0$. Moreover, the min-max principle (see, e.g., Theorem XIII.2 of \cite{Reed-Simon-IV}) together with \eqref{21/9/15/16:23} and the first claim {\it (i)} shows that 
\begin{equation}\label{21/10/15/9:51} 
E_{2} 
= 
\sup_{\phi \in H^{1}(\mathbb{R}^{d})} 
\inf_{{u\in H^{1}(\mathbb{R}^{d}), \|u\|_{L^{2}}=1}\atop{\langle u, \phi \rangle=0}} 
B_{\omega}(u, u)
\ge 
\inf_{{u\in H^{1}(\mathbb{R}^{d}), \|u\|_{L^{2}}=1}\atop{\langle u, L_{\omega,+}\Phi_{\omega} \rangle=0}} 
B_{\omega}(u, u)
\ge 0. 
\end{equation}
Thus, we see that $E_{2}=0$ and the claim {\it (ii)} is true. 
\end{proof}


\bibliographystyle{plain}


\end{document}